\let\oldtocsection=\tocsection
\let\oldtocsubsection=\tocsubsection
\let\oldtocsubsubsection=\tocsubsubsection
\renewcommand{\tocsection}[2]{\hspace{0em}\oldtocsection{#1}{#2}}
\renewcommand{\tocsubsection}[2]{\hspace{2em}\oldtocsubsection{#1}{#2}}
\renewcommand{\tocsubsubsection}[2]{\hspace{2em}\oldtocsubsubsection{#1}{#2}}
\def\smallskip{\vskip\smallskipamount}
\def\medskip{\vskip\medskipamount}
\def\bigskip{\vskip\bigskipamount}
\newtheoremstyle{thmstyle}{}{}{\itshape}{}{\bfseries}{ }{5pt}{}
\newtheoremstyle{exstyle}{}{}{}{}{\bfseries}{ }{5pt}{}
\newtheoremstyle{defstyle}{}{}{}{}{\bfseries}{ }{5pt}{}
\newtheoremstyle{remstyle}{}{}{}{}{\bfseries}{ }{5pt}{}
\theoremstyle{thmstyle}
\newtheorem{thm}{Theorem}[section]
\newtheorem{theorem}[thm]{Theorem}
\newtheorem{lemma}[thm]{Lemma}
\newtheorem{proposition}[thm]{Proposition}
\newtheorem{corollary}[thm]{Corollary}
\newtheorem{conjecture}[thm]{Conjecture}
\theoremstyle{exstyle}
\theoremstyle{defstyle}
\newtheorem{def-prop}[thm]{Definition-Proposition}
\newtheorem{def-lem}[thm]{Definition-Lemma}
\newtheorem{rem-convention}[thm]{Remark-Convention}
\newtheorem{def-note}[thm]{Definition-Notation}
\theoremstyle{remstyle}
\newtheorem{remark}[thm]{Remark}
\newcommand{\Hom}{\operatorname{Hom}}
\newcommand{\Ext}{\operatorname{Ext}}
\DeclareMathOperator*{\modu}{mod}
\DeclareMathOperator*{\rep}{rep}
\DeclareMathOperator*{\End}{End}
\DeclareMathOperator*{\node}{node}
\DeclareMathOperator*{\Mri}{Mri}
\DeclareMathOperator*{\GL}{GL}
\DeclareMathOperator*{\SL}{SL}
\DeclareMathOperator*{\SI}{SI}
\DeclareMathOperator*{\sB}{sB}
\DeclareMathOperator*{\gC}{gC}
\DeclareMathOperator*{\B}{B}
\DeclareMathOperator*{\nD}{nD}
\newenvironment{customthm}[1]
  {\innercustomthm}
  {\endinnercustomthm}        
\newcommand{\doublewidetilde}[1]{{%
  \mathpalette\double@widetilde{#1}%
}}
\newcommand{\double@widetilde}[2]{%
  \sbox\z@{$\m@th#1\widetilde{#2}$}%
  \ht\z@=.9\ht\z@
  \widetilde{\box\z@}%
}
\begin{document}

\title[$\tau$-tilting theory and module varieties]{$\tau$-tilting finiteness of non-distributive algebras and their module varieties}
\author[Kaveh Mousavand]{Kaveh Mousavand} 
\address{LaCIM, UQAM, Montréal, Québec, Canada}
\email{mousavand.kaveh@gmail.com }
\thanks{The author is partially supported by ISM Scholarship.}

\subjclass[2010]{16G20,16G60,05E10,14K10}

\maketitle

\begin{abstract}
We treat the $\tau$-tilting finiteness of those minimal representation-infinite (min-rep-infinite) algebras which are non-distributive.
Building upon the new results of Bongartz, we fully determine which algebras in this family are $\tau$-tilting finite and which ones are not. This complements our previous work in which we carried out a similar analysis for the min-rep-infinite biserial algebras.
Consequently, we obtain nontrivial explicit sufficient conditions for $\tau$-tilting infiniteness of a large family of algebras. This also produces concrete families of ``minimal $\tau$-tilting infinite algebras"-- the modern counterpart of min-rep-infinite algebras, independently introduced by the author and Wang.

We further use our results on the family of non-distributive algebras to establish a conjectural connection between the $\tau$-tilting theory and two geometric notions in the study of module varieties introduced by Chindris, Kinser and Weyman. We verify the conjectures for the algebras studied in this note:
For the min-rep-infinite algebras which are non-distributive or biserial, we show that if $\Lambda$ has the dense orbit property, then it must be $\tau$-tilting finite. Moreover, we prove that such an algebra is Schur-representation-finite if and only if it is $\tau$-tilting finite. The latter result gives a categorical interpretation of Schur-representation-finiteness over this family of min-rep-infinite algebras.
\end{abstract}

\tableofcontents

\section{Introduction}\label{Introduction}

\subsection{Notations and setting}
Throughout the paper, $k$ is assumed to be an algebraically closed field. 
By $\Lambda$ we denote a basic, connected, associative algebra with multiplicative identity and $\Lambda$ is finite dimensional as a vector space over $k$. By an ideal, we always mean a two-sided ideal.
Every such $\Lambda$ is determined by a bound quiver $(Q,I)$, where $Q$ is a quiver and $I$ is an admissible ideal in the path algebra $kQ$, such that $\Lambda\simeq kQ/I$.

For the standard materials in representation theory of finite dimensional algebras, see \cite{ASS}. Due to the close connections between the scope of this paper and our recent work \cite{Mo}, we frequently refer to our methodology and results in the aforementioned paper, so the reader can view our new results in a larger framework.

\subsection{Objectives and results }
In \cite{Mo} we began a systematic study of minimal representation-infinite algebras from the viewpoint of the $\tau$-tilting theory introduced in \cite{AIR}.
Recall that $\Lambda$ is \emph{minimal representation-infinite} (or min-rep-infinite, for short) provided that it is rep-infinite but for every nonzero ideal $J$ of $\Lambda$, the quotient algebra $\Lambda/J$ is rep-finite.
As shown in \cite{Ri}, every min-rep-infinite algebra falls into (at least) one of the following three subfamilies: 
\begin{itemize}

\item $\Mri({\mathfrak{F}_{\sB}})$: min-rep-infinite \textbf{s}pecial \textbf{B}iserial algebras;

\item $\Mri({\mathfrak{F}_{\nD}})$: min-rep-infinite \textbf{n}on-\textbf{D}istributive algebras;

\item $\Mri({\mathfrak{F}_{\gC}})$: min-rep-infinite algebras with a \textbf{g}ood \textbf{C}overing $\widetilde{\Lambda}$ such that a finite convex subcategory of $\widetilde{\Lambda}$ is tame-concealed of type $\widetilde{\mathbb{D}}_n$ or $\widetilde{\mathbb{E}}_{6,7,8}$.
\end{itemize}

Building upon the work of Ringel \cite{Ri} and the ``brick-$\tau$-rigid correspondence" of Demonet-Iyama-Jasso \cite{DIJ}, in \cite{Mo} we fully determined which algebras in $\Mri({\mathfrak{F}_{\sB}})$ are $\tau$-tilting finite and which ones are not. 
Here we use the recent results of Bongartz in \cite{Bo2} and do the same for the family $\Mri({\mathfrak{F}_{\nD}})$. Although our methodology is similar to that employed in \cite{Mo}, we need to use some new tools to treat non-distributive algebras.
This is because for an algebra $\Lambda$ in $\Mri({\mathfrak{F}_{\nD}})$, unlike those in $\Mri({\mathfrak{F}_{\sB}})$, there is no concrete classification of all indecomposable $\Lambda$-modules in terms of string and bands.

Based on our study of $\tau$-tilting finiteness of min-rep-infinite algebras, in the second part of the paper we relate certain geometric notions defined in \cite{CKW} in terms of $\rep(\Lambda,\underline{d})$ to the
$\tau$-tilting finiteness of $\Lambda$ for $\Lambda$ in  $\Mri({\mathfrak{F}_{\nD}})$ or $\Mri({\mathfrak{F}_{\sB}})$.

For more details on our methodology and the long-term objectives of this project, we refer the reader to the introduction of \cite{Mo}. All the materials and motivations from the $\tau$-tilting theory that we employ here come from \cite{AIR} and \cite{DIJ}. Furthermore, in \cite{Ba}, \cite{Bo2} and \cite{Ri} the reader can find more explanation on the decisive role of min-rep-infinite algebras in the development of representation theory of associative algebras.

For an algebra $\Lambda$, a module $X$ in $\modu \Lambda$ is called a \emph{brick} if $\End_{\Lambda}(X) \simeq k$. In the more geometric settings, a brick in $\modu \Lambda$ is often called a \emph{Schur} representation of $\Lambda$.
In \cite{DIJ}, the authors find several equivalences for $\tau$-tilting finiteness of an algebra, among which they show $\Lambda$ is $\tau$-tilting finite if and only if it is \emph{brick-finite} (i.e, there are only finitely many isomorphism classes of bricks in $\modu \Lambda$). This equivalence plays a crucial role in our study of the $\tau$-tilting finiteness.

An algebra $\Lambda$ is called \emph{distributive} provided that the ideal lattice is distributive. It is well-known that every rep-finite algebra is distributive.
Suppose $\mathfrak{F}_{\nD}$ denotes the family of all non-distributive algebras. Hence $\mathfrak{F}_{\nD}$ only contains rep-infinite algebras.
As in the list above, $\Mri(\mathfrak{F}_{\nD})$ is the family of those min-rep-infinite algebras which are non-distributive.

In \cite{Ja}, Jans studied distributive algebras and showed that every $\Lambda$ in $\mathfrak{F}_{\nD}$ is \emph{strongly unbounded}, meaning that there exists an infinite sequence of positive integers $d_1 < d_2 < d_3 < \cdots$ such that for each $d_i$ there are infinitely many (isomorphism classes of) indecomposable $\Lambda$-modules of length $d_i$.
His work was primarily motivated by the celebrated Brauer-Thrall conjectures and it was followed by the intensive study of min-rep-infinite algebras to verify the conjectures for the general case. For the statements of the conjectures, we refer to \cite[IV.5]{ASS}. More on the historical aspects of the research in this direction could be found in \cite{Bo2}.

Before we state our results, let us recall some definitions. For $\Lambda=kQ/I$, a vertex $x \in Q_0$ is called a \emph{node} if it is neither a sink nor a source and for every arrow $\alpha$ incoming to $x$ and every arrow $\beta$ outgoing from $x$, we have $\beta\alpha \in I$. If $\node(\Lambda)$ denotes the set of nodes in $\Lambda$, we say $\Lambda$ is \emph{node-free} if $\node(\Lambda)= \emptyset$.
Following \cite{Mo} and \cite{Wa}, we say $\Lambda$ is \emph{minimal $\tau$-tilting infinite} (or min-$\tau$-infinite, for short) if $\Lambda$ is $\tau$-tilting infinite, but $\Lambda/J$ is $\tau$-tilting finite, for each nonzero ideal $J$ in $\Lambda$.

Now we summarize the results of the paper. Let us begin with a general result on the minimal $\tau$-tilting infinite algebras, which we prove in Section \ref{Section:Minimal tau-tilting infinite algebras}. We remark that, unlike the rest of the paper, for the following theorem we do not need to assume that the field $k$ is algebraically closed.

\begin{theorem}
If $k$ is an arbitrary field and $\Lambda=kQ/I$ is minimal $\tau$-tilting infinite,
then $\Lambda$ is node-free.
\end{theorem}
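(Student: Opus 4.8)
The plan is to argue by contradiction. Suppose $\Lambda=kQ/I$ is minimal $\tau$-tilting infinite and admits a node $x\in Q_0$; I will produce a nonzero ideal $J$ of $\Lambda$ with $\Lambda/J$ again $\tau$-tilting infinite, contradicting the minimality of $\Lambda$. Throughout I use the equivalence ``$\tau$-tilting infinite $\iff$ infinitely many isomorphism classes of bricks'' of \cite{DIJ}, which holds over an arbitrary field; this is precisely why no assumption on $k$ enters.

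The heart of the matter is the following lemma, which I would state and prove first: \emph{if $x$ is a node of $\Lambda$ and $M\in\modu\Lambda$ is a brick, then either $M_\alpha=0$ for every arrow $\alpha$ ending at $x$, or $M_\beta=0$ for every arrow $\beta$ starting at $x$.} Suppose not, so $M_{\alpha_0}\neq 0$ for some arrow $\alpha_0$ into $x$ and $M_{\beta_0}\neq 0$ for some arrow $\beta_0$ out of $x$. Put $K:=\bigcap_{\beta}\Ker M_\beta\subseteq M_x$, the intersection running over all arrows $\beta$ leaving $x$. Since $M_{\beta_0}\neq 0$ we get $K\subsetneq M_x$; since $\beta\alpha\in I$ for every arrow $\alpha$ into and $\beta$ out of $x$, the image of each $M_\alpha$ lies in $K$, so $K\neq 0$ as $M_{\alpha_0}\neq 0$. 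The subspace of $M$ agreeing with $M_v$ for $v\neq x$ and equal to $K$ at $x$ is then a submodule $L\subsetneq M$ with $M/L\cong S_x^{\,r}$ for some $r\geq 1$, while any nonzero vector of $K$ spans a submodule of $M$ isomorphic to $S_x$ and contained in $L$. Composing a surjection $M\twoheadrightarrow S_x$ with such an inclusion $S_x\hookrightarrow M$ yields a nonzero, square-zero endomorphism of $M$, which is impossible since $\End_\Lambda(M)\simeq k$.

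Granting the lemma, the conclusion is quick. Being $\tau$-tilting infinite, $\Lambda$ has infinitely many bricks; by the lemma and the pigeonhole principle, either infinitely many of them are annihilated by every arrow ending at $x$, or infinitely many are annihilated by every arrow starting at $x$. Consider the first alternative (the second is symmetric). Let $J\subseteq\Lambda$ be the ideal generated by the images of all arrows ending at $x$. Since $x$ is not a source there is at least one such arrow, and since $I$ is admissible it is nonzero in $\Lambda$, so $J\neq 0$. The bricks in question are $J$-annihilated, hence are pairwise non-isomorphic bricks in $\modu(\Lambda/J)$; therefore $\Lambda/J$ is $\tau$-tilting infinite, contradicting minimality. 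In the second alternative one argues identically with the arrows starting at $x$, using that $x$ is not a sink. Hence $\Lambda$ has no node.

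I expect the only genuinely delicate verifications to be, inside the lemma, that the subspace $L$ is indeed a $\Lambda$-submodule and that the composite $M\twoheadrightarrow S_x\hookrightarrow M$ is a nonzero square-zero (hence non-invertible) map; both follow by unwinding the defining relations $\beta\alpha\in I$ of a node and the fact that a node, by definition, is neither a sink nor a source. Everything else is routine bookkeeping, and, as noted, independent of the field $k$.
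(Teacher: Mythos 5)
Your proposal is correct and follows essentially the same route as the paper: a brick cannot be nonzero on both an arrow into and an arrow out of a node (your square-zero endomorphism via $S_x$ in top and socle is exactly the paper's nilpotent endomorphism argument), and then pigeonhole plus the brick--$\tau$-rigid correspondence of \cite{DIJ} yields infinitely many bricks over a proper quotient, contradicting minimality. The only cosmetic difference is that you kill the ideal generated by all arrows ending (or starting) at $x$, whereas the paper kills a single such arrow $\gamma$.
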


We use this observation and our results in \cite{Mo}, and based on the elegant classification of min-rep-infinite non-distributive algebras by Bongartz \cite{Bo2}, in Section \ref{Section:tau-tilting infinite minimal representation-infinite algebras} we obtain the following theorem. For a quick review of non-distributive algebras and Bongartz's classification, see Section \ref{Priliminary}.

\begin{theorem}\label{Introduction: bound quivers of tau-inf}
Let $\Lambda=kQ/I$ be a minimal representation-infinite algebra which is non-distributive. Then, $\Lambda$ is $\tau$-tilting infinite if and only if $(Q,I)$ belongs to the following list of bound quivers:
\begin{center}
\begin{tikzpicture}[scale=0.8]

\node at (-1.5,1) {$A(p,q)$};

\node at (0.58,0.75) {$\circ$};
\node at (0.55,1) {$a$};
\draw [->] (0.5,0.75)-- (-0.25,0.05);
\node at (0.1,0.6) {$\alpha_1$};
\node at (-0.25,-0.05) {$\bullet$};

\draw [dashed,->] (-0.25,0)-- (-0.25,-0.9);

\node at (-0.25,-1) {$\bullet$};
\draw [->] (-0.2,-1.05)-- (0.5,-1.8);
\node at (0,-1.55) {$\alpha_p$};

\draw [->] (0.65,0.75)-- (1.3,0.05);
\node at (1,0.6) {$\beta_1$};
    \node at (1.3,-0.05) {$\bullet$};  

\draw [dashed,->] (1.35,0)-- (1.35,-0.9);

\node at (1.3,-1) {$\bullet$};
\draw [->] (1.3,-1.05)--(0.6,-1.8);
\node at (1.1,-1.55) {$\beta_q$};

\node at (0.55,-1.85) {$\circ$};
\node at (0.57,-2.1) {$z$};
\node at (0.65,-2.5) {$\forall p, \forall q \in \mathbb{Z}_{>0}$};
\node at (0.65,-3) {$I_A:=0$};


\node at (5.75,1) {$B(p,q)$};

\node at (8.08,0.75) {$\circ$};
\node at (8.05,1) {$a$};
\draw [->] (8,0.75)-- (7.25,0.05);
\node at (7.6,0.6) {$\alpha_1$};
\node at (7.25,-0.05) {$\bullet$};

\draw [dashed,->] (7.25,0)-- (7.25,-0.9);

\node at (7.25,-1) {$\bullet$};
\draw [->] (7.3,-1.05)-- (8,-1.8);
\node at (7.5,-1.55) {$\alpha_p$};

\draw [->] (8.1,0.7)-- (8.1,0);
\node at (8.05,0.35) {$\beta_1$};
    \node at (8.1,-0.07) {$\bullet$};  

\draw [dashed,->] (8.1,0)-- (8.1,-0.9);

\node at (8.1,-1) {$\bullet$};
\draw [->] (8.1,-1.05)--(8.1,-1.75);
\node at (8.07,-1.35) {$\beta_q$};

\node at (8.1,-1.85) {$\circ$};
\node at (8.05,-2.1) {$z$};


\draw [->] (8.15,0.72)-- (8.8,-0.5);
\node at (8.65,0.2) {$\gamma_1$};
    \node at (8.85,-0.55) {$\bullet$};  

\draw [->] (8.8,-0.55)-- (8.17,-1.8);
\node at (8.65,-1.3) {$\gamma_2$};

\node at (7.5,-2.5) {$2=q\leq p$ \quad or \quad $3=q\leq p \leq 5$};

\node at (7.5,-3) {$I_B:=\langle \gamma_2\gamma_1+\beta_q\cdots\beta_1+\alpha_p\cdots\alpha_1 \rangle$};


\node at (-1.5,-4.25) {$C(p)$};

\node at (1.28,-4.25) {$\bullet$};
\draw [->] (1.2,-4.25)-- (0.5,-4.7);
\node at (0.45,-4.72) {$\bullet$};
\draw [->] (0.45,-4.7)-- (0.45,-5.47);
\node at (0.75,-5.05) {$\rho_p$};
\node at (0.45,-5.55) {$\bullet$};
\draw [->] (0.45,-5.55)-- (0.45,-6.25);
\node at (0.45,-6.3) {$\bullet$};
\draw [->] (0.5,-6.35)-- (1.2,-6.8);
\node at (0.75,-5.85) {$\rho_1$};

\draw [<-] (1.35,-4.25)-- (2,-4.7);
    \node at (2,-4.72) {$\bullet$};  

\draw [dashed,<-] (2.05,-4.8)-- (2.05,-6.2);

\node at (2,-6.3) {$\bullet$};
\draw [->] (1.3,-6.8)--(1.98,-6.35);
\node at (1.25,-6.85) {$\bullet$};

\node at (-0.5,-4.5) {$\circ$};
\node at (-0.5,-4.2) {$a$};
\draw [->] (-0.5,-4.55)-- (0.37,-5.55);
\node at (-0.25,-5.05) {$\alpha$};
\draw [->] (0.37,-5.58)-- (-0.5,-6.45);
\node at (-0.25,-6) {$\beta$};
\node at (-0.5,-6.5) {$\circ$};
\node at (-0.5,-6.7) {$z$};

\node at (1,-7.25) {$\forall p \in \mathbb{Z}_{>0}$};
\node at (1.25,-7.75) {$I_C:=\langle \rho_1\rho_p\rangle$};



\node at (5.75,-4.25) {$D(p,q)$};

\node at (9.08,-4.25) {$\bullet$};
\draw [->] (9,-4.25)-- (8.3,-4.7);
\node at (8.25,-4.72) {$\bullet$};
\draw [->] (8.25,-4.7)-- (8.25,-5.47);
\node at (8.5,-5.05) {$\rho_p$};
\node at (8.25,-5.55) {$\bullet$};
\draw [->] (8.25,-5.55)-- (8.25,-6.25);
\node at (8.25,-6.3) {$\bullet$};
\draw [->] (8.3,-6.35)-- (9,-6.8);
\node at (8.5,-5.85) {$\rho_1$};

\draw [<-] (9.15,-4.25)-- (9.8,-4.7);
    \node at (9.8,-4.72) {$\bullet$};  

\draw [dashed,<-] (9.85,-4.8)-- (9.85,-6.2);

\node at (9.8,-6.3) {$\bullet$};
\draw [->] (9.1,-6.8)--(9.78,-6.35);
\node at (9.05,-6.85) {$\bullet$};

\node at (7.25,-4.5) {$\circ$};
\node at (7.25,-4.2) {$a$};
\draw [->] (7.25,-4.55)-- (8.17,-5.55);
\node at (7.5,-5.05) {$\alpha$};
\draw [->] (8.17,-5.58)-- (7.25,-6.45);
\node at (7.5,-6) {$\beta$};
\node at (7.25,-6.5) {$\circ$};
\node at (7.25,-6.7) {$z$};

\draw [->] (7.25,-4.55)-- (6.5,-5);
\node at (6.75,-4.6) {$\gamma_1$};
\node at (6.5,-5.1) {$\bullet$};
\draw [dashed,->] (6.5,-5.05)-- (6.5,-6);
\node at (6.5,-6.05) {$\bullet$};
\node at (6.7,-6.42) {$\gamma_{q+1}$};
\draw [->] (6.5,-6.05)-- (7.2,-6.45);
\node at (8,-7.25) {$\forall p, \forall q \in \mathbb{Z}_{>0}$};
\node at (7.75,-7.75) {$I_D:=\langle \rho_1\rho_p, \gamma_{q+1} \cdots \gamma_1-\beta \alpha\rangle$};

\end{tikzpicture}
\end{center}
\end{theorem}

From the preceding theorem and our similar results in \cite{Mo}, in Section \ref{Section:non-trivial sufficient conditions for tau-inf} we obtain concrete sufficient criteria that can be efficiently used to verify $\tau$-tilting infiniteness of a large family of algebras. The following pair of theorems give explicit characterizations of $\tau$-tilting infinite algebras in the families $\Mri(\mathfrak{F}_{\sB})$ and $\Mri(\mathfrak{F}_{\nD})$.

\begin{theorem}\cite{Mo}\label{Section:Introduction: tau-inf min-rep-inf bis}
Let $\Lambda=kQ/I$ be a minimal representation-infinite biserial algebra. Then, $\Lambda$ is $\tau$-tilting infinite if and only if it is gentle. That being the case, either $\Lambda=k\widetilde{\mathbb{A}}_m$ (for $m \in \mathbb{Z}_{>0}$) or $I$ is generated by exactly two quadratic relations.
\end{theorem}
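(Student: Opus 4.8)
The plan is to reduce the statement to an explicit list of bound quivers, and then on that list to separate the gentle algebras from the non-gentle ones, establishing $\tau$-tilting infiniteness in the former case and $\tau$-tilting finiteness in the latter. The only general input about $\tau$-tilting theory that I use is the equivalence ``$\tau$-tilting finite $\Leftrightarrow$ brick-finite'' of \cite{DIJ}, already recalled above.

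First I would invoke Ringel's analysis of minimal representation-infinite biserial algebras \cite{Ri}: each such $\Lambda=kQ/I$ is in fact a string algebra, its bound quiver $(Q,I)$ carries --- up to rotation and inversion --- exactly one band $b$, which governs the unique one-parameter family of indecomposable $\Lambda$-modules, and $(Q,I)$ occurs in a short explicit list of families. (Here one also has to note that minimality, together with the structure theory of biserial algebras, forces $\Lambda$ to be special biserial, so that Ringel's list applies.) Reading off the list, the gentle members are precisely $k\widetilde{\mathbb{A}}_m$, with $I=0$, and one further family for which $I$ is generated by exactly two quadratic relations; this is the content of the last sentence of the theorem. It therefore remains to prove that $\Lambda$ is $\tau$-tilting infinite if and only if it is gentle.

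For the implication ``gentle $\Rightarrow$ $\tau$-tilting infinite'', I would fix the band $b$ of the gentle algebra $\Lambda$ and look at the band modules $M(b,\lambda):=M(b,\lambda,1)$, $\lambda\in k^{*}$. Since $\Lambda$ is gentle --- in particular $I$ contains no relation of length $\ge 3$ and no arrow lies on two relations --- the standard combinatorial description of homomorphisms between string and band modules shows that every $M(b,\lambda)$ is a brick and that $M(b,\lambda)\not\cong M(b,\lambda')$ for $\lambda\neq\lambda'$; equivalently, $b$ produces a family of homogeneous tubes whose mouth modules are pairwise non-isomorphic bricks. Thus $\Lambda$ has infinitely many bricks, hence is $\tau$-tilting infinite by \cite{DIJ}.

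The remaining implication, ``not gentle $\Rightarrow$ $\tau$-tilting finite'', is the heart of the matter, and I would prove it by showing that each non-gentle algebra on Ringel's list has only finitely many bricks. As $\Lambda$ is a string algebra, every indecomposable is a string module $M(w)$ or a band module $M(b,\lambda,n)$. Band modules give no bricks: for $n\ge 2$ this is standard, and for $n=1$ the precise way in which gentleness fails --- a relation of length $\ge 3$ meeting $b$, or two relations sharing an arrow of $b$ --- yields, using the explicit shape of $b$ inside $(Q,I)$, a nonzero non-invertible endomorphism of $M(b,\lambda,1)$. One is then left to bound the string bricks: a brick $M(w)$ admits no factorization of $w$ producing a nonzero graph-map endomorphism and, crucially, $w$ cannot traverse $b$ essentially twice; combining this with minimality (a string that omits some arrow is a module over a proper, hence representation-finite and therefore brick-finite, quotient $\Lambda/J$) caps the length of brick strings, after which a finite verification over the finitely many non-gentle families completes the proof. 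I expect this last step to be the main obstacle: a representation-infinite string algebra can a priori support infinitely many bricks living on long strings, so one must genuinely use both the failure of gentleness (to eliminate the band-module bricks) and the minimality of $\Lambda$ (to bound the string-module bricks), and then carry out the case analysis family by family; a secondary subtlety is the reduction from ``biserial'' to ``special biserial'' at the very start.
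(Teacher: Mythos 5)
The paper does not reprove this statement here---it is quoted from \cite{Mo}---but the strategy recalled there and in Sections 5--6 of the present paper is essentially the one you propose: reduce to Ringel's explicit list (acyclic $k\widetilde{\mathbb{A}}_m$, barbells, wind wheels) after identifying minimal representation-infinite biserial with special biserial, invoke the brick--$\tau$-rigid correspondence of \cite{DIJ}, obtain infinitely many bricks from a one-parameter family of band modules for the gentle members, and show the non-gentle wind wheels are brick-finite by a string/band analysis using minimality. One caution: your blanket assertion that gentleness alone makes every quasi-length-one band module $M(b,\lambda,1)$ a brick is stronger than what is true in general or needed here---for the barbell one verifies the brick property directly for the specific band, and this is exactly where the non-uniseriality of the bar is used (cf.\ \cite[Proposition 5.6]{Mo})---but this is a local repair rather than a change of route.
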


Analogously, for the algebras treated here we obtain the following concrete characterization. For further details, see Section \ref{Section:non-trivial sufficient conditions for tau-inf}.

\begin{theorem}\label{Section:Introduction: tau-inf min-rep-inf non-dis}
Let $\Lambda=kQ/I$ be a minimal representation-infinite algebra and non-distributive.
Then $\Lambda$ is $\tau$-tilting infinite if and only if $Q$ has a source or a sink.
\end{theorem}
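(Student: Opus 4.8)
The plan is to derive Theorem~\ref{Section:Introduction: tau-inf min-rep-inf non-dis} as a corollary of the explicit classification in Theorem~\ref{Introduction: bound quivers of tau-inf}, by inspecting the four families of bound quivers $A(p,q)$, $B(p,q)$, $C(p)$, $D(p,q)$ and checking, in each case, whether $Q$ has a source or a sink. First I would establish the ``only if'' direction: suppose $\Lambda$ is $\tau$-tilting infinite; then $(Q,I)$ is one of the four families above, and I claim each of these quivers has either a source or a sink. For $A(p,q)$ and $B(p,q)$ the vertex $a$ is a source (all displayed arrows $\alpha_1,\beta_1,\gamma_1$ emanate from it and none enter it); one still has to confirm that $I$ imposes no relation forcing a retraction that would alter the incidence structure, but since the quiver is literally drawn with $a$ having no incoming arrow, $a$ is a source of $Q$ as a quiver, which is all that is claimed. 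For $C(p)$ and $D(p,q)$ one checks symmetrically that the vertex $z$ is a sink. That disposes of one direction essentially by inspection.

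For the ``if'' direction I would argue the contrapositive: if $\Lambda$ is min-rep-infinite, non-distributive, \emph{and} $\tau$-tilting finite, then $Q$ has no source and no sink. Here the key input is the first theorem of the excerpt: a minimal $\tau$-tilting infinite algebra is node-free. Combined with the fact (Bongartz's classification, recalled in Section~\ref{Priliminary}) that every min-rep-infinite non-distributive algebra which is \emph{not} on the list of Theorem~\ref{Introduction: bound quivers of tau-inf} is $\tau$-tilting finite, one should show that every such ``off-list'' bound quiver has a node whenever it has a source or a sink — more precisely, that the presence of a source or a sink in a min-rep-infinite non-distributive $\Lambda$ already forces $\Lambda$ onto the list. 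The natural route is: take $\Lambda$ min-rep-infinite, non-distributive, with $Q$ having a source (the sink case is dual, by passing to $\Lambda^{\mathrm{op}}$, which preserves all the relevant properties including $\tau$-tilting finiteness). Then consult Bongartz's classification of such $\Lambda$ and observe that the only shapes compatible with an extremal (source or sink) vertex are exactly $A(p,q)$, $B(p,q)$, $C(p)$, $D(p,q)$; invoke Theorem~\ref{Introduction: bound quivers of tau-inf} to conclude $\tau$-tilting infiniteness, contradicting the assumption.

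Concretely, the steps are: (1) reduce to the contrapositive and to the ``source'' case via opposite-algebra duality; (2) recall Bongartz's list of bound quivers for $\Mri(\mathfrak{F}_{\nD})$ from Section~\ref{Priliminary}; (3) for each bound quiver on that list that is \emph{not} among $A,B,C,D$, exhibit a node — this is the step requiring genuine case analysis, since one must go through the ``non-separating'' shapes in Bongartz's classification and locate, for each, a vertex that is neither a sink nor a source with the zero-composition property, thereby showing such $\Lambda$ can have neither a source nor a sink without already being on the list; (4) conversely, for each of $A(p,q)$, $B(p,q)$, $C(p)$, $D(p,q)$, point to the source $a$ (resp.\ sink $z$) and invoke Theorem~\ref{Introduction: bound quivers of tau-inf}.

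The main obstacle is step~(3): it is not a priori obvious, without the explicit classification, that a min-rep-infinite non-distributive algebra whose quiver has an extremal vertex must be one of the four listed families, so one genuinely needs to traverse Bongartz's list and verify the dichotomy case by case. A secondary, more conceptual point worth making explicit in the write-up is why the node-free theorem is the right tool here: an algebra on Bongartz's list but off the $A,B,C,D$ list is $\tau$-tilting finite yet rep-infinite, so it is \emph{not} minimal $\tau$-tilting infinite; the node-free theorem then does not apply to it directly, and instead the logic runs the other way — the four listed families are precisely the min-$\tau$-infinite ones in $\Mri(\mathfrak{F}_{\nD})$, and the shape constraint (source/sink $\Leftrightarrow$ on the list) is extracted from Bongartz's combinatorics rather than from the node-free theorem alone. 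I would make sure the exposition keeps this logical dependency straight.
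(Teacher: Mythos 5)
Your overall route is the same as the paper's: deduce the statement by comparing Bongartz's classification (Theorem \ref{Thm:Bongartz classification of min-rep-inf non-distributive}) with the list of $\tau$-tilting infinite members (Theorem \ref{tau-inf non-distributive Thm}), and your ``only if'' direction (each of $A(p,q)$, $B(p,q)$, $C(p)$, $D(p,q)$ has the source $a$ and the sink $z$) is exactly the required inspection. The gap is in your step (3). You propose to settle the ``if'' direction by exhibiting a node in every bound quiver of the classification that is not of type $A,B,C,D$, and to infer from this that such a quiver has neither a source nor a sink. Neither half of this works. First, the off-list algebras are the glued ones \emph{and} the algebras $E(p,q,r)$, and $E(p,q,r)$ is node-free: its relations are $\alpha_1\alpha_p$, $\gamma_1\gamma_q$ and the long path $\gamma_1\theta_r\cdots\theta_1\alpha_p$, so at the left junction the composition $\theta_1\alpha_p$ is not in $I_E$ (and at the right junction $\gamma_1\theta_r\notin I_E$); this is precisely why the paper must invoke the wind-wheel result for its $\tau$-tilting finiteness, and why Corollary \ref{Cor: explicit criterial for tau-inf min-rep-inf} reads ``a node \emph{or} a non-quadratic monomial relation'' rather than just ``a node''. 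Second, even where a node does exist (the glued quivers), its existence does not by itself exclude a source or a sink elsewhere in the quiver, so the inference ``node, hence no extremal vertex'' is a non sequitur.

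What the ``if'' direction actually needs --- and what makes the paper's proof a direct consequence of the two cited theorems --- is the elementary check that every vertex of $E(p,q,r)$ and of every glued quiver has at least one incoming and one outgoing arrow: in $E(p,q,r)$ each vertex lies on one of the two oriented cycles or in the interior of the bar, and gluing identifies the unique source $a$ with the unique sink $z$ of $A$, $B$, $C$ or $D$ (their only extremal vertices), so the glued quiver has no source or sink left, while $E$ has none to glue. With step (3) replaced by this verification your argument is correct and coincides with the paper's; the contrapositive set-up, the passage to the opposite algebra, and the appeal to the node-free theorem are then unnecessary, as your own closing paragraph already half-acknowledges.
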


The above theorem also produces explicit families of minimal $\tau$-tilting infinite algebras.
As the supplement to the last two theorems, we can also give a unified conceptual description of the $\tau$-tilting finite algebras in the families $\Mri(\mathfrak{F}_{\sB})$ and $\Mri(\mathfrak{F}_{\nD})$. Henceforth, for an algebra $\Lambda=kQ/I$, by $R$ we denote a minimal set of uniform relations that generates $I$.
That is every element of $R$ is a linear combination of the form $\sum_{i=1}^{d} \lambda_i p_i$, where $d\in \mathbb{Z}_{>0}$ and each $\lambda_i \in k\setminus \{0\}$, and moreover all paths $p_i$ in $Q$ are of length not less than two such that all $p_i$ start at the same vertex and also end at the same vertex.

\begin{theorem}\label{tau-fin min biserial and non-dist}
Let $\Lambda=kQ/I$ be a minimal representation-infinite algebra which is biserial or non-distributive. Then $\Lambda$ is $\tau$-tilting finite if and only if $\node(\Lambda)\neq \emptyset$ or $|R|=3$.
\end{theorem}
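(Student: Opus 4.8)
The plan is to recast the statement in contrapositive form and then reduce each implication to the explicit classifications already available. So I would prove the equivalent assertion that $\Lambda$ is $\tau$-tilting infinite if and only if $\node(\Lambda)=\emptyset$ and $|R|\neq 3$. The starting point is the observation that, since $\Lambda$ is min-rep-infinite, every quotient $\Lambda/J$ by a nonzero ideal $J$ is rep-finite and hence $\tau$-tilting finite; consequently a min-rep-infinite algebra is $\tau$-tilting finite precisely when it is not minimal $\tau$-tilting infinite. In particular, if $\Lambda$ is $\tau$-tilting infinite it is automatically minimal $\tau$-tilting infinite, so the node-freeness theorem for minimal $\tau$-tilting infinite algebras proved above yields $\node(\Lambda)=\emptyset$. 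This already gives the implication: if $\node(\Lambda)\neq\emptyset$ then $\Lambda$ is $\tau$-tilting finite.

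For the rest of the ``if'' direction I would read the size of $R$ off the lists of $\tau$-tilting infinite algebras. If $\Lambda$ is biserial and $\tau$-tilting infinite, then by Theorem \ref{Section:Introduction: tau-inf min-rep-inf bis} either $\Lambda=k\widetilde{\mathbb{A}}_m$ with $I=0$, so $|R|=0$, or $I$ is generated by two quadratic relations, so $|R|=2$. If $\Lambda$ is non-distributive and $\tau$-tilting infinite, Theorem \ref{Introduction: bound quivers of tau-inf} lists $(Q,I)$ explicitly, and inspecting $I_A,I_B,I_C,I_D$ gives $|R|=0$ for $A(p,q)$, $|R|=1$ for $B(p,q)$ and $C(p)$, and $|R|=2$ for $D(p,q)$. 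In every case $|R|\leq 2$, so $|R|=3$ forces $\Lambda$ to be $\tau$-tilting finite; together with the previous paragraph this proves the ``if'' direction.

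The ``only if'' direction is where the substance lies: one must show that a node-free, $\tau$-tilting finite, min-rep-infinite algebra that is biserial or non-distributive has $|R|=3$. Here I would go into the two structural classifications. On the non-distributive side, Theorem \ref{Section:Introduction: tau-inf min-rep-inf non-dis} (equivalently Theorem \ref{Introduction: bound quivers of tau-inf}) shows that such a $\Lambda$ has a quiver with neither a source nor a sink, so it is one of the algebras in Bongartz's classification of min-rep-infinite non-distributive algebras (recalled in Section \ref{Priliminary}) that does not appear among $A(p,q),B(p,q),C(p),D(p,q)$; one then runs through these, discards the ones containing a node, and checks case by case that a minimal set of uniform relations generating $I$ consists of exactly three elements. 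On the biserial side, Theorem \ref{Section:Introduction: tau-inf min-rep-inf bis} shows a $\tau$-tilting finite min-rep-infinite biserial algebra is non-gentle, hence lies among the non-gentle members of the classification of min-rep-infinite special biserial algebras used in \cite{Mo}, and one verifies directly that the node-free ones there have $|R|=3$.

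I expect this final step to be the main obstacle. Everything preceding it is essentially bookkeeping on top of results already stated, but the ``only if'' direction genuinely requires working through Bongartz's list in \cite{Bo2} and the biserial list of \cite{Mo}, isolating in each the node-free $\tau$-tilting finite algebras, and computing a minimal uniform generating set of the defining ideal for every one of them --- in particular ruling out that any of them has $|R|\in\{0,1,2\}$ or $|R|\geq 4$. Organizing this case analysis, rather than any individual computation, is the real difficulty.
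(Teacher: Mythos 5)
Your proposal is correct and follows essentially the same route as the paper: both directions come down to the explicit classifications (Theorem \ref{tau-inf non-distributive Thm} / Bongartz's list in the non-distributive case, the classification from \cite{Mo} in the biserial case) together with node-freeness of minimal $\tau$-tilting infinite algebras, after which one reads off $\node(\Lambda)$ and $|R|$ case by case. The only difference is emphasis: the final case analysis you flag as the main obstacle collapses at once, since the node-free $\tau$-tilting finite algebras are exactly the $E(p,q,r)$ (wind wheel) class, whose ideal visibly has three uniform generators, and the biserial side is settled in the paper simply by citing the table in \cite{Mo}.
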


Based on our methodology, we can in fact prove a version of Theorems \ref{Section:Introduction: tau-inf min-rep-inf bis} and \ref{Section:Introduction: tau-inf min-rep-inf non-dis} that allows us to relate our results to the more geometric aspects of representation theory of min-rep-infinite algebras. In particular, as a consequence of Theorems \ref{my conj verified for min-rep non-dist} and \ref{my conj verified for min-rep biserial}, we show the following result.

\begin{theorem}
Let $\Lambda$ be a minimal-representation-infinite algebra which is biserial or non-distributive. Then, $\Lambda$ is $\tau$-tilting infinite if and only if there exists an infinite family of non-isomorphic bricks in $\modu \Lambda$ which are of the same length.
\end{theorem}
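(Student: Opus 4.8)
The plan is to deduce this statement as a formal consequence of the characterization theorems already established in the paper, combined with the Demonet--Iyama--Jasso equivalence between $\tau$-tilting finiteness and brick-finiteness. Concretely, I would argue in two directions. For the ``only if'' direction, suppose $\Lambda$ is $\tau$-tilting infinite. Then by Theorem~\ref{Section:Introduction: tau-inf min-rep-inf bis} (if $\Lambda$ is biserial) or Theorem~\ref{Section:Introduction: tau-inf min-rep-inf non-dis} (if $\Lambda$ is non-distributive) we know precisely which bound quivers $(Q,I)$ occur: in the biserial case $\Lambda$ is gentle and either $\Lambda = k\widetilde{\mathbb{A}}_m$ or $I$ is generated by two quadratic relations; in the non-distributive case $(Q,I)$ is one of the four families $A(p,q), B(p,q), C(p), D(p,q)$ of Theorem~\ref{Introduction: bound quivers of tau-inf}. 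For each of these explicit families I would exhibit, by a direct construction, an infinite sequence of pairwise non-isomorphic bricks all sharing a common dimension vector (hence a common length). This is exactly the content promised by ``Theorems~\ref{my conj verified for min-rep non-dist} and \ref{my conj verified for min-rep biserial}'' referenced just above the statement, so in the final write-up this direction is a citation to those results together with the strengthening that the bricks can be chosen of fixed length; the bricks can be realized as suitable string modules (in the biserial/gentle case) or via the explicit modules produced in Bongartz's analysis of the non-distributive families, parametrized over $\mathbb{P}^1(k)$ or by a length parameter.

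For the ``if'' direction, suppose there is an infinite family of non-isomorphic bricks in $\modu\Lambda$ of the same length $d$. In particular $\modu\Lambda$ contains infinitely many isomorphism classes of bricks, so $\Lambda$ is brick-infinite, and therefore $\tau$-tilting infinite by the Demonet--Iyama--Jasso theorem quoted in the introduction (brick-finite $\iff$ $\tau$-tilting finite). This direction requires nothing beyond that equivalence and is immediate.

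The substance, then, is entirely in the forward direction, and the main obstacle is the uniformity of the length: it is not enough to know each $\tau$-tilting infinite $\Lambda$ in these families has infinitely many bricks (that already follows from brick-infiniteness), one must produce them in a single dimension. I would handle this case by case along the finite list of bound quivers. For $A(p,q)$ (zero relations, a sink and a source joined by two linear paths) the algebra is a Nakayama-type orientation of $\widetilde{\mathbb{A}}_{p+q-1}$ and the regular bricks of a fixed ``quasi-length one'' slice over the tame hereditary case give a $\mathbb{P}^1(k)$-family of bricks of constant dimension vector; for $C(p)$ and the gentle two-relation biserial algebras one writes down an explicit one-parameter band and takes the associated band modules $M(b,1,\lambda)$, which are bricks of dimension vector independent of $\lambda$; for $B(p,q)$ and $D(p,q)$ one uses the analogous one-parameter families coming from Bongartz's explicit description, checking (via $\End_\Lambda(X)\simeq k$, computed on the string/band combinatorics or on the explicit matrices) that generic members are bricks and that all members in the family share the same dimension vector. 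The only genuinely delicate points are verifying the Schur (brick) property for the non-distributive families $B$ and $D$, where there is no string-and-band classification available, so one must argue directly with the defining relations $I_B$ and $I_D$; but since the paper has already done this endomorphism analysis to prove $\tau$-tilting infiniteness for these families, the constant-length refinement is obtained by simply recording that the modules constructed there all have the same dimension vector.
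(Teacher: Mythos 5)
Your proposal is correct and follows essentially the same route as the paper: the backward direction is the Demonet--Iyama--Jasso brick-finiteness equivalence, and the forward direction reduces via the classification theorems to the explicit families and then records that the one-parameter families of bricks already constructed (homogeneous-tube/band modules for $k\widetilde{\mathbb{A}}_m$ and the barbell case, and Bongartz's modules $M_\lambda=\Lambda e_a/\langle u-\lambda v\rangle$ for $B(p,q)$, $C(p)$, $D(p,q)$) all share a fixed length, which is exactly how Theorems \ref{my conj verified for min-rep non-dist} and \ref{my conj verified for min-rep biserial} are proved. The only small slip is assigning band modules to $C(p)$: that algebra is not special biserial (at the central vertex the arrow $\alpha$ composes nontrivially with both outgoing arrows), so the string/band formalism does not literally apply there, but the Bongartz construction you already use for $B$ and $D$ covers $C(p)$ as well, so nothing essential changes.
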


In \cite{CKW}, where $k$ is additionally assumed to be of characteristic zero, Chindris-Kinser-Weyman study the module varieties of finite dimensional algebras and introduce some new geometric notions. In particular, the authors say $\Lambda=kQ/I$ has the \emph{dense orbit property} (or $\Lambda$ is DO, for short) if for each dimension vector $\underline{d} \in \mathbb{Z}_{\geq 0}^{|Q_0|}$ and every irreducible component $\mathcal{Z}$ of $\modu (\Lambda,\underline{d})$, $\mathcal{Z}$ has a dense orbit under the action of the general linear group $\GL(\underline{d})$. Moreover, $\Lambda$ is called \emph{Schur-representation-finite} provided that for each dimension vector $\underline{d}$, there are only finitely many orbits of Schur representations in $\modu (\Lambda,\underline{d})$. 

We emphasis that one should not confuse the notion of Schur-representation-finite algebras with those algebras which admit only finitely many isomorphism classes of bricks. However, we show that for the algebras treated in this paper, these notions are the same and we further conjecture this is true in general.

By the brick-$\tau$-rigid correspondence in \cite{DIJ}, if $\Lambda$ is $\tau$-tilting finite, $\modu \Lambda$ contains only finitely many isomorphism classes of bricks, thus $\Lambda$ is Schur-rep-finite. 
For the family of min-rep-infinite algebras studied here, we have the following result.

\begin{theorem}\label{Introduction: proven conjectures}
Suppose $\Lambda$ is a minimal representation-infinite algebra which is biserial or non-distributive. Then, the following hold:

\begin{enumerate}
    \item If $\Lambda$ is DO, then it is $\tau$-tilting finite.
    
    \item $\Lambda$ is Schur-representation-finite if and only if it is $\tau$-tilting finite.
\end{enumerate}
\end{theorem}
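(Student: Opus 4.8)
The plan is to deduce Theorem \ref{Introduction: proven conjectures} from the sharpened characterizations already available, namely Theorems \ref{my conj verified for min-rep non-dist} and \ref{my conj verified for min-rep biserial} (equivalently, the displayed statement that such a $\Lambda$ is $\tau$-tilting infinite precisely when $\modu\Lambda$ carries an infinite family of pairwise non-isomorphic bricks of one fixed length), together with the brick--$\tau$-rigid correspondence of \cite{DIJ} and an elementary dimension count in the module variety. No new structural input about non-distributive or biserial algebras is needed at this stage.

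First I would treat (2). The implication ``$\tau$-tilting finite $\Rightarrow$ Schur-representation-finite'' is the one recalled in the text: by \cite{DIJ} such a $\Lambda$ is brick-finite, so for every dimension vector $\underline d$ there are only finitely many isomorphism classes of bricks of dimension vector $\underline d$, hence only finitely many $\GL(\underline d)$-orbits of Schur representations in $\modu(\Lambda,\underline d)$. For the converse I would argue by contraposition: if $\Lambda$ is $\tau$-tilting infinite then, by the sharpened theorem, there is an infinite set of pairwise non-isomorphic bricks $B_1,B_2,\dots$ with $\dim_k B_i = d$ for a common $d$. Only finitely many dimension vectors $\underline d \in \mathbb Z_{\geq 0}^{|Q_0|}$ satisfy $\sum_j \underline d_j = d$, so by the pigeonhole principle infinitely many of the $B_i$ share a single dimension vector $\underline d$; these yield infinitely many $\GL(\underline d)$-orbits of Schur representations inside $\modu(\Lambda,\underline d)$, so $\Lambda$ is not Schur-representation-finite. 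This proves (2).

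For (1) I would first isolate the general observation that the dense orbit property already forces Schur-representation-finiteness, after which (1) follows from (2). Fix $\underline d$ and an irreducible component $\mathcal Z$ of $\modu(\Lambda,\underline d)$; if $\Lambda$ is DO then $\mathcal Z = \overline{\mathcal O_M}$ for some $M$. Suppose $B \in \mathcal Z$ is a brick. Then $\overline{\mathcal O_B} \subseteq \mathcal Z$, and the identity $\dim \mathcal O_N = \dim \GL(\underline d) - \dim_k \End_\Lambda(N)$ gives $\dim\GL(\underline d) - 1 = \dim\mathcal O_B \leq \dim\mathcal Z = \dim\GL(\underline d) - \dim_k\End_\Lambda(M)$, so $\End_\Lambda(M) \simeq k$ and $\dim\mathcal O_M = \dim\mathcal O_B = \dim\mathcal Z$. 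Since $\overline{\mathcal O_B}$ and $\overline{\mathcal O_M}$ are then irreducible closed subsets of $\mathcal Z$ of its own dimension, both equal $\mathcal Z$; as each orbit is open in its closure, $\mathcal O_B$ and $\mathcal O_M$ are open dense in $\mathcal Z$, hence meet, hence coincide, and $B \simeq M$. Thus every irreducible component contains at most one brick, and since $\modu(\Lambda,\underline d)$ has only finitely many components, $\Lambda$ is Schur-representation-finite; by (2) it is $\tau$-tilting finite.

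Given everything that precedes it in the paper, this argument is essentially bookkeeping, and I expect the only points needing care to be the harmless passage from ``common length'' to ``common dimension vector'' and the dimension inequality above, which relies on the standard orbit-dimension formula and on orbits being locally closed. In that sense the real weight of Theorem \ref{Introduction: proven conjectures} sits in Theorems \ref{my conj verified for min-rep non-dist} and \ref{my conj verified for min-rep biserial} and in Bongartz's classification that feeds them, rather than in the deduction sketched here.
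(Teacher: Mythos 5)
Your proposal is correct and follows essentially the paper's own route: the paper likewise obtains Theorem \ref{Introduction: proven conjectures} directly from Theorems \ref{my conj verified for min-rep non-dist} and \ref{my conj verified for min-rep biserial}, whose proofs rest on the explicit one-parameter families of bricks of a fixed length (hence fixed dimension vector) together with the implication that the dense orbit property forces Schur-representation-finiteness. The only difference is minor: where the paper cites \cite[Lemma 3.2]{CKW} for that implication, you re-prove it via the standard orbit-dimension formula, and you add the harmless pigeonhole passage from common length to common dimension vector, both of which are sound.
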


Note that the above theorem allows us to view the dense orbit and Schur-rep-finite property through the lens of approximation theory and functorially finite torsion (-free) classes.
Moreover, combining our results with those given in \cite{CKW}, we obtain explicit examples of non-distributive algebras which are $\tau$-tilting finite but not DO. In particular, we show that the converse of the first part in the preceding theorem does not hold, neither for the family $\Mri(\mathfrak{F}_{\sB})$ nor $\Mri(\mathfrak{F}_{\nD})$.

Based upon the above theorem and some further observations given in the last section, we conjecture that the preceding theorem holds in general. See Conjecture \ref{my conjectures}.
From a sequence of implications shown in \cite{CKW} and \cite{DIJ}, we can present our conjectures in the following diagram, where the dotted arrows indicate the statements of Theorem \ref{Introduction: proven conjectures}, which we conjecture that hold in general.

\begin{center}
\begin{tikzpicture}

\node at (-7.25,0) {Rep-finite};
\draw  [->] (-6.25,0) --(-5.5,0);
\node at (-4.35,0) {DO Property};
\draw  [->] (-3.1,0) --(-1,0);
\node at (0.5,0) {Schur-rep-finite};
----
\draw  [dotted,->] (-4,-0.25) --(-2.5,-0.75);
\node at (-2.25,-1) {{\color{blue} $\tau$-tilting finite}};
\draw  [dotted,<-] (-2,-0.75) --(0,-0.25);
----
\draw  [->] (-1.5,-0.75) --(0.5,-0.25);

\end{tikzpicture}
\end{center}

We believe that any result on the aforementioned conjecture will shed new light on the connections between the homological and geometric aspects of representation theory and will provide new tools to attack several open problems posed in \cite{CKW}.

Before we finish this section, let us remark that the second part of the preceding theorem (and Conjecture \ref{my conjectures}) is of the same flavor as the celebrated Brauer-Thrall conjectures. In particular, it implies that for an algebra $\Lambda$, if $\modu \Lambda$ contains infinitely many isomorphism classes of bricks, then for a positive integer $d$ there are infinitely many non-isomorphic bricks of length $d$ over $\Lambda$. This observation also highlights the need for a classification of minimal $\tau$-tilting infinite algebras. This is because one can reduce similar problems about the behavior of bricks to the family of minimal $\tau$-tilting infinite algebras, in the same way the Brauer-Thrall conjectures were reduced to minimal representation-infinite algebras.
\section{Preliminaries and Background}\label{Priliminary}

The materials appearing in this section are often well-known and thus the proofs are omitted. For the less obvious statements we provide some references.

\subsection{Non-distributive algebras}

As recalled earlier, an algebra $\Lambda$ is said to be \emph{distributive} if the lattice of two-sided ideals in $\Lambda$ is distributive. In \cite{Ja}, Jans shows that over an infinite field $k$, the algebra $\Lambda$ is distributive if and only if this lattice is finite.
The following statements are handy and provide a nice characterization of distributive algebras. Although they often hold in a more general setting, unless specified otherwise, throughout this note we always assume $k$ is algebraically closed.
Note that for an algebra $\Lambda=kQ/I$ and a pair of vertices $x$ and $y$ in $Q_0$, the vector space $e_y\Lambda e_x$ always has a natural $e_y\Lambda e_y-e_x\Lambda e_x$ bimodule structure.

\begin{proposition}\label{distributive algebras}
Let $\Lambda=kQ/I$ be an algebra and $x$ and $y$ be two vertices in $Q_0$.
\begin{enumerate}

    \item[\cite{Ja}] The lattice of subbimodules of $e_y\Lambda e_x$ is distributive if and only if $e_y\Lambda e_x$ is a uniserial bimodule.
    
    \item[\cite{Ku}] $\Lambda$ is distributive if and only if the lattice of subbimodules of $e_y\Lambda e_x$ is distributive for every pair of vertices $x$ and $y$ in $Q_0$. In this case, every ideal of $\Lambda$ is principal.
\end{enumerate}
\end{proposition}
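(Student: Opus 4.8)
The plan is to treat the two items separately, citing the original sources for the conceptual input while indicating the mechanism. For item (1), the statement is a bimodule-theoretic analogue of the classical fact that a module has distributive submodule lattice if and only if it is ``locally cyclic'' in a strong sense; over an algebraically closed $k$ the relevant Peirce component $e_y\Lambda e_x$ is a bimodule over the local rings $e_y\Lambda e_y$ and $e_x\Lambda e_x$, whose radicals act nilpotently. First I would recall the general lattice-theoretic dichotomy: a modular lattice is distributive precisely when it contains no sublattice isomorphic to the diamond $M_3$, i.e.\ no three pairwise-incomparable elements with common meet and join. Translating this into bimodule language, the existence of such a configuration inside $e_y\Lambda e_x$ is equivalent to the existence of a subquotient isomorphic to $k \oplus k$ as a bimodule (with both radicals acting as zero), which is exactly the obstruction to uniseriality. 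Conversely, a uniserial bimodule has a chain of submodules linearly ordered by inclusion, so every pair of subbimodules is comparable and the lattice is a chain, hence distributive. I would present this as the content of \cite{Ja}, spelling out only the $M_3$-versus-uniserial equivalence and leaving the routine verifications to the reference.

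For item (2), the first equivalence is a localization/globalization statement: an ideal $J$ of $\Lambda$ decomposes as $J = \bigoplus_{x,y} e_y J e_x$, and this decomposition is compatible with sums and intersections of ideals, so the ideal lattice of $\Lambda$ embeds as a sublattice into the product $\prod_{x,y} \mathcal{L}(e_y\Lambda e_x)$ of subbimodule lattices. Since a sublattice of a product of distributive lattices is distributive, the ``if'' direction is immediate; for the ``only if'' direction one observes that each $\mathcal{L}(e_y\Lambda e_x)$ is itself (isomorphic to) an interval sublattice of the ideal lattice — for a subbimodule $U \subseteq e_y\Lambda e_x$ one considers the ideal $\Lambda U \Lambda$, or more carefully the lattice of ideals contained between suitable bounds — so distributivity descends. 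This is the argument attributed to \cite{Ku}. The final assertion, that every ideal is principal when $\Lambda$ is distributive, would then be deduced by working component-wise: each $e_y\Lambda e_x$, being uniserial by item (1) and finite-dimensional, is cyclic as a bimodule, generated by a single element $u_{yx}$; I would then argue that the finitely many generators $u_{yx}$ can be combined — exploiting that the idempotents $e_x$ are orthogonal and that the radical filtration is respected — into a single generator, e.g.\ taking an appropriate sum $\sum_{x,y} u_{yx}$ and checking it generates the whole ideal.

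The main obstacle I expect is the last step — promoting ``each Peirce component is cyclic'' to ``the ideal is principal'' — because combining the component generators $u_{yx}$ into one element is not entirely formal: one must ensure that the two-sided ideal generated by the single element $\sum u_{yx}$ recovers each $e_y(\Lambda J\Lambda)e_x$ separately, which requires multiplying on the left by $e_y$ and on the right by $e_x$ to isolate components and then knowing that the bimodule generated by $u_{yx}$ inside the component is all of $e_y J e_x$. The cleanest route is probably to avoid the explicit sum and instead argue by induction on $\dim_k J$: pick a component $e_y J e_x$ of maximal ``depth'' in the radical filtration that is not yet captured, add a generator for it, and use that modding out by a principal ideal again gives a distributive algebra, so the hypothesis persists. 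Since this technical point is the one genuinely non-formal ingredient and it is already in the literature, in the paper I would simply cite \cite{Ku} for item (2) in full and restrict any written-out argument to item (1) and the structural embedding of the ideal lattice into the product of Peirce lattices.
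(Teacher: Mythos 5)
The paper never writes out a proof of this proposition: it is recalled as background in Section 2, where the author explicitly says the proofs are omitted and only the references \cite{Ja} and \cite{Ku} are supplied, so your plan of citing those sources and merely indicating the mechanism is exactly what the paper does, and your sketch goes further than the text. Two remarks on the sketch itself. First, in the ``only if'' direction of item (2), your claim that each subbimodule lattice sits as an interval sublattice of the ideal lattice is the one shaky step: the map $U\mapsto \Lambda U\Lambda$ preserves joins but not obviously meets, and it is not clear how to choose bounds making $\mathcal{L}(e_y\Lambda e_x)$ an interval. The clean argument runs the other way: the Peirce projection $J\mapsto e_yJe_x$ is a lattice homomorphism (every ideal is homogeneous for the Peirce decomposition, so it respects both sums and intersections), and it is surjective onto $\mathcal{L}(e_y\Lambda e_x)$ because $e_y(\Lambda U\Lambda)e_x=(e_y\Lambda e_y)\,U\,(e_x\Lambda e_x)=U$; since distributivity, being an equational property, passes to homomorphic images of lattices, it descends to each $\mathcal{L}(e_y\Lambda e_x)$. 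Second, the ``principal ideal'' step that you single out as the main obstacle is in fact formal, precisely by the two observations you list: a subbimodule $e_yJe_x$ of the uniserial bimodule $e_y\Lambda e_x$ is again uniserial, hence cyclic with generator $u_{yx}$; putting $u=\sum_{x,y}u_{yx}$, orthogonality of the idempotents gives $e_yue_x=u_{yx}$, so $\Lambda u\Lambda\supseteq (e_y\Lambda e_y)\,u_{yx}\,(e_x\Lambda e_x)=e_yJe_x$ for every pair $(x,y)$, and summing yields $\Lambda u\Lambda=J$; no induction on $\dim_k J$ is needed. With these adjustments your outline is a correct proof of both items, and the $M_3$-versus-uniserial discussion for item (1) (via the radical layers of $e_y\Lambda e_x$ over the local corner algebras) is the standard argument behind the cited result of \cite{Ja}.
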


\begin{remark}\label{localness of trancations of distributive algebras}

The second statement of the preceding proposition is equivalent to the following characterization: for each $x \in Q_0$, there exists $m(x) \in \mathbb{Z}_{>0}$ such that $e_x\Lambda e_x \simeq k[t]/\langle t^{m(x)}\rangle$ and for any pair of vertices $x$ and $y$ in $Q_0$, we have that $e_y\Lambda e_x$ is cyclic as a left $e_y\Lambda e_y$-module or cyclic as a right  $e_x\Lambda e_x$-module (or both).

Furthermore, from the above properties of distributive algebras, it follows that for any surjective algebra morphism $\phi: \Lambda \rightarrow \Lambda'$, if $\Lambda$ is distributive, then so is $\Lambda'$. Let $\Mri(\mathfrak{F}_{\nD})$ denote the set of those min-rep-infinite algebras which are non-distributive. Observe that every $\Lambda$ in $\Mri(\mathfrak{F}_{\nD})$ is also minimal with respect to the property of being non-distributive, because every rep-finite algebra is distributive.
\end{remark}

For further details on the distributive and non-distributive algebras, we refer the reader to \cite{Ja} and \cite{Ku}.

\subsection{Bongartz's classification}\label{subsection: Bongartz's classification}

In \cite{Bo2}, through a careful study of non-distributive algebras, Bongartz gives a complete classification of non-distributive min-rep-infinite algebras in terms of quivers and relations.
In order to state his classification, let us first fix some notations and recall the notions of resolving a node, as well as gluing, in bound quivers.

Let $\Lambda$ be an algebra with the bound quiver $(Q,I)$. For a vertex $x \in Q_0$, let
$$x^{-}(Q_1):=\{\alpha \in Q_1\,|\, e(\alpha)=x\}$$
be the set of all arrows ending in $x$. Dually, by $x^{+}(Q_1)$ we denote the set of all arrows in $Q$ which start at $x$. Obviously $x^{-}(Q_1) \cap x^{+}(Q_1) \neq \emptyset$ if and only if there is loop at $x$ (i.e, a cyclic path of length one that starts and ends at $x$). 
Moreover, $x \in Q_0$ is a \emph{source} if $x^{-}(Q_1)=\emptyset$ and it is a \emph{sink} if $x^{+}(Q_1)=\emptyset$.
A vertex $x$ is called a \emph{node} provided that it is neither a sink nor a source and if $\alpha \in x^{-}(Q_1)$ and $\beta \in x^{+}(Q_1)$, then $\beta \alpha \in I$. For $\Lambda=kQ/I$, let $\node(\Lambda)$ denote the set of all nodes in $(Q,I)$.

Suppose that $x \in \node (\Lambda)$ and $R$ is a minimal set of generators for $I$ in the bound quiver $(Q,I)$. By \emph{resolving} $x$ we obtain the algebra $\Lambda'=kQ'/I'$, where $(Q',I')$ is given as follows: replace $x$ with two vertices $x^+$ and $x^-$ such that every $\alpha \in x^{-}(Q_1)$ ends in $x^-$ and each $\beta \in x^{+}(Q_1)$ starts at $x^+$. The rest of $Q$ remains untouched and by $Q'$ we denote the resulting quiver. Moreover, let $I'$ be the admissible ideal in $kQ'$, generated by the subset $R'$ of $R$, where $R'$ consists of all relations in $R$ except for those quadratic relations of the form $\beta \alpha$, for all pairs of $\alpha \in x^{-}(Q_1)$ and $\beta \in x^{+}(Q_1)$.

If $(Q',I')$ is obtained from $(Q,I)$ via a sequence of node resolutions at $x_i \in \node(\Lambda)$, it is known that the indecomposable modules in $\modu \Lambda'$ and $\modu \Lambda$ are in one-to-one correspondence, except for the simple modules associated to vertices $x_i$, $x_i^{+}$and $x_i^{-}$. In particular, $\Lambda$ is rep-finite if and only if $\Lambda'$ is so.
As the dual notion, \emph{gluing} a source $a$ with a sink $z$ in a bound quiver $(Q,I)$ is the reverse process of resolving a node: identify $a$ and $z$ to get a new vertex, say $x$, then add all the quadratic relations $\beta\alpha$ for $\alpha \in z^{-}(Q_1)$ and $\beta \in a^{+}(Q_1)$. Obviously, $x$ is a node in the resulting quiver. 
For more details on the node resolution and gluing, see \cite[Section 2.3]{Bo2}. 

For the classification of min-rep-infinite algebras in terms of quivers and relations, Bongartz \cite{Bo2} and Ringel \cite{Ri} substantially benefited from the idea of gluing and resolving nodes, as they both preserve the representation-type of the algebras. However, as we observed in \cite{Mo}, gluing and resolving nodes may change the $\tau$-tilting-type of the algebras (i.e, a $\tau$-tilting infintie algebra may become $\tau$-tilting finite after gluing a sink and a source).

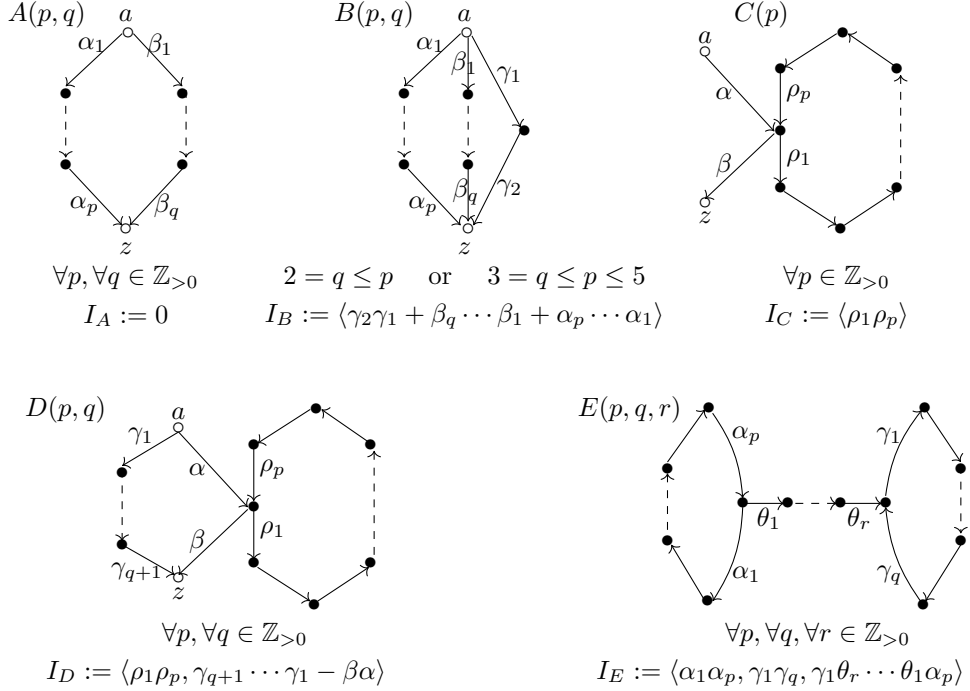
\begin{figure}
\begin{center}
\begin{tikzpicture}

\node at (-1.5,1) {$A(p,q)$};

\node at (-0.42,0.75) {$\circ$};
\node at (-0.45,1) {$a$};
\draw [->] (-0.5,0.75)-- (-1.25,0.05);
\node at (-0.9,0.6) {$\alpha_1$};
\node at (-1.25,-0.05) {$\bullet$};

\draw [dashed,->] (-1.25,0)-- (-1.25,-0.9);

\node at (-1.25,-1) {$\bullet$};
\draw [->] (-1.2,-1.05)-- (-0.5,-1.8);
\node at (-1,-1.55) {$\alpha_p$};

\draw [->] (-0.35,0.75)-- (0.3,0.05);
\node at (0,0.6) {$\beta_1$};
    \node at (0.3,-0.05) {$\bullet$};  

\draw [dashed,->] (0.35,0)-- (0.35,-0.9);

\node at (0.3,-1) {$\bullet$};
\draw [->] (0.3,-1.05)--(-0.4,-1.8);
\node at (0.1,-1.55) {$\beta_q$};

\node at (-0.45,-1.85) {$\circ$};
\node at (-0.45,-2.1) {$z$};
\node at (-0.45,-2.5) {$\forall p, \forall q \in \mathbb{Z}_{>0}$};
\node at (-0.45,-3) {$I_A:=0$};


\node at (2.85,1) {$B(p,q)$};

\node at (4.08,0.75) {$\circ$};
\node at (4.05,1) {$a$};
\draw [->] (4,0.75)-- (3.25,0.05);
\node at (3.6,0.6) {$\alpha_1$};
\node at (3.25,-0.05) {$\bullet$};

\draw [dashed,->] (3.25,0)-- (3.25,-0.9);

\node at (3.25,-1) {$\bullet$};
\draw [->] (3.3,-1.05)-- (4,-1.8);
\node at (3.5,-1.55) {$\alpha_p$};

\draw [->] (4.1,0.7)-- (4.1,0);
\node at (4.05,0.35) {$\beta_1$};
    \node at (4.1,-0.07) {$\bullet$};  

\draw [dashed,->] (4.1,0)-- (4.1,-0.9);

\node at (4.1,-1) {$\bullet$};
\draw [->] (4.1,-1.05)--(4.1,-1.75);
\node at (4.07,-1.35) {$\beta_q$};

\node at (4.1,-1.85) {$\circ$};
\node at (4.05,-2.1) {$z$};


\draw [->] (4.15,0.72)-- (4.8,-0.5);
\node at (4.65,0.2) {$\gamma_1$};
    \node at (4.85,-0.55) {$\bullet$};  

\draw [->] (4.8,-0.55)-- (4.17,-1.8);
\node at (4.65,-1.3) {$\gamma_2$};

\node at (4.05,-2.5) {$2=q\leq p$ \quad or \quad $3=q\leq p \leq 5$};

\node at (4.05,-3) {$I_B:=\langle \gamma_2\gamma_1+\beta_q\cdots\beta_1+\alpha_p\cdots\alpha_1 \rangle$};


\node at (8,1) {$C(p)$};

\node at (9.08,0.75) {$\bullet$};
\draw [->] (9,0.75)-- (8.3,0.3);
\node at (8.25,0.28) {$\bullet$};

\draw [->] (8.25,0.3)-- (8.25,-0.47);
\node at (8.5,-0.05) {$\rho_p$};
\node at (8.25,-0.55) {$\bullet$};
\draw [->] (8.25,-0.55)-- (8.25,-1.25);
\node at (8.25,-1.3) {$\bullet$};
\draw [->] (8.3,-1.35)-- (9,-1.8);
\node at (8.5,-0.9) {$\rho_1$};

\draw [<-] (9.15,0.75)-- (9.8,0.3);
    \node at (9.8,0.28) {$\bullet$};  

\draw [dashed,<-] (9.85,0.2)-- (9.85,-1.2);

\node at (9.8,-1.3) {$\bullet$};
\draw [->] (9.1,-1.8)--(9.78,-1.35);

\node at (9.05,-1.85) {$\bullet$};

\node at (7.25,0.5) {$\circ$};
\node at (7.25,0.7) {$a$};
\draw [->] (7.25,0.45)-- (8.17,-0.55);
\node at (7.5,-0.05) {$\alpha$};
\draw [->] (8.17,-0.58)-- (7.25,-1.45);
\node at (7.5,-1) {$\beta$};
\node at (7.25,-1.5) {$\circ$};
\node at (7.25,-1.7) {$z$};

\node at (9,-2.5) {$\forall p \in \mathbb{Z}_{>0}$};
\node at (9,-3) {$I_C:=\langle \rho_1 \rho_p \rangle$};


\node at (-1.25,-4.25) {$D(p,q)$};

\node at (2.08,-4.25) {$\bullet$};
\draw [->] (2,-4.25)-- (1.3,-4.7);
\node at (1.25,-4.72) {$\bullet$};
\draw [->] (1.25,-4.7)-- (1.25,-5.47);
\node at (1.5,-5.05) {$\rho_p$};
\node at (1.25,-5.55) {$\bullet$};
\draw [->] (1.25,-5.55)-- (1.25,-6.25);
\node at (1.25,-6.3) {$\bullet$};
\draw [->] (1.3,-6.35)-- (2,-6.8);
\node at (1.5,-5.85) {$\rho_1$};

\draw [<-] (2.15,-4.25)-- (2.8,-4.7);
    \node at (2.8,-4.72) {$\bullet$};  

\draw [dashed,<-] (2.85,-4.8)-- (2.85,-6.2);

\node at (2.8,-6.3) {$\bullet$};
\draw [->] (2.1,-6.8)--(2.78,-6.35);
\node at (2.05,-6.85) {$\bullet$};

\node at (0.25,-4.5) {$\circ$};
\node at (0.25,-4.3) {$a$};
\draw [->] (0.25,-4.55)-- (1.17,-5.55);
\node at (0.5,-5.05) {$\alpha$};
\draw [->] (1.17,-5.58)-- (0.25,-6.45);
\node at (0.5,-6) {$\beta$};
\node at (0.25,-6.5) {$\circ$};
\node at (0.25,-6.7) {$z$};

\draw [->] (0.25,-4.55)-- (-0.5,-5);
\node at (-0.25,-4.6) {$\gamma_1$};
\node at (-0.5,-5.1) {$\bullet$};
\draw [dashed,->] (-0.5,-5.05)-- (-0.5,-6);
\node at (-0.5,-6.05) {$\bullet$};
\node at (-0.3,-6.42) {$\gamma_{q+1}$};
\draw [->] (-0.5,-6.05)-- (0.2,-6.45);
\node at (1,-7.25) {$\forall p, \forall q \in \mathbb{Z}_{>0}$};
\node at (0.75,-7.75) {$I_D:=\langle \rho_1\rho_p, \gamma_{q+1} \cdots \gamma_1-\beta \alpha\rangle$};


\node at (6.25,-4.25) {$E(p,q,r)$};

\node at (7.75,-5.5) {$\bullet$}; 
\draw [->] (7.75,-5.55) to [bend left=15](7.35,-6.8);
\node at (7.28,-6.8) {$\bullet$};
\draw [<-] (6.8,-6.05)-- (7.25,-6.8);
\node at (6.75,-6) {$\bullet$};
\draw [dashed,<-] (6.75,-5.1)-- (6.75,-5.9);
\node at (6.75,-5.05) {$\bullet$};
\draw [<-] (7.25,-4.25)-- (6.75,-4.95);
\node at (7.3,-4.23) {$\bullet$};
\draw [->] (7.35,-4.3) to [bend left=15](7.75,-5.43);

\node at (7.8,-6.45) {$\alpha_1$};
\node at (7.8,-4.6) {$\alpha_p$};

\draw [->] (7.8,-5.5)-- (8.3,-5.5);
\node at (8.1,-5.7) {$\theta_1$};
\node at (8.35,-5.5) {$\bullet$};
\draw [dashed,->] (8.45,-5.5)-- (9,-5.5);
\node at (9.05,-5.5) {$\bullet$};
\draw [->] (9.1,-5.5)-- (9.6,-5.5);
\node at (9.3,-5.7) {$\theta_r$};
\node at (9.65,-5.5) {$\bullet$};


\node at (10.17,-4.23) {$\bullet$};
\draw [->]  (9.65,-5.4) to [bend left=15] (10.13,-4.26);
\draw [->] (10.18,-4.25)-- (10.65,-4.95);
\node at (10.65,-5.05) {$\bullet$};  
\draw [dashed,->] (10.65,-5)-- (10.65,-5.9);
\node at (10.65,-6) {$\bullet$};
\draw [->] (10.65,-6.05)--(10.18,-6.8);
\node at (10.15,-6.85) {$\bullet$};
\draw [->]  (10.15,-6.85) to [bend left=15] (9.65,-5.55);

\node at (9.7,-4.6) {$\gamma_1$};
\node at (9.7,-6.45) {$\gamma_q$};


\node at (8.75,-7.25) {$\forall p, \forall q, \forall r \in \mathbb{Z}_{>0}$};

\node at (8.25,-7.75) {$I_E:=\langle \alpha_1\alpha_p, \gamma_1\gamma_q, \gamma_1 \theta_r \cdots \theta_1 \alpha_p\rangle$};

\end{tikzpicture}
\end{center}
    \caption{Bound quivers of the minimal representation-infinite non-distributive algebras.}
    \label{fig:non-distributive quivers}
\end{figure}

Now we are ready to recall the full classification of algebras in $\Mri(\mathfrak{F}_{\nD})$.

\begin{theorem}\cite[Theorem 1]{Bo2}
{\label{Thm:Bongartz classification of min-rep-inf non-distributive}}
An algebra $\Lambda=kQ/I$ is minimal representation-infinite and non-distributive if and only if $(Q,I)$ is one of the algebras of Figure \ref{fig:non-distributive quivers} or those obtained by gluing them.
\end{theorem}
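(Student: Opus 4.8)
The plan is to prove both implications, treating the ``if'' direction by direct verification and concentrating the real work on the classification (``only if'') direction. For the ``if'' direction, I would check for each of the finitely many families $A,B,C,D,E$ of Figure~\ref{fig:non-distributive quivers} (and, via the gluing construction recalled above, for the algebras obtained from them) that the algebra lies in $\Mri(\mathfrak{F}_{\nD})$. Non-distributivity is immediate from Proposition~\ref{distributive algebras}: in each case one exhibits a pair of vertices $x,y$ for which $e_y\Lambda e_x$ fails to be uniserial as a bimodule, because two or three linearly independent \emph{parallel} paths join them. Minimal representation-infiniteness is then a finite verification: each of these algebras is tame concealed (critical), so it is representation-infinite while every proper convex quotient has positive definite Tits form and is hence representation-finite. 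Finally, since gluing a source to a sink preserves representation type (as recalled after the definition of node resolution), and the witnessing non-uniserial bimodule is unaffected, every gluing of an $A$--$E$ algebra again lands in $\Mri(\mathfrak{F}_{\nD})$.

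For the ``only if'' direction, the first step is a reduction to the node-free case. Given $\Lambda\in\Mri(\mathfrak{F}_{\nD})$, I would resolve all of its nodes to obtain $\Lambda'$; since node resolution preserves representation type, $\Lambda'$ is again representation-infinite, and the one-to-one correspondence of indecomposables shows it is again minimal representation-infinite. Non-distributivity is preserved as well, because any pair of paths witnessing non-uniseriality of some $e_y\Lambda e_x$ cannot pass through a node (the relevant composites lie in $I$), so the witnessing bimodule survives the splitting of the node. Thus it suffices to classify the node-free members of $\Mri(\mathfrak{F}_{\nD})$ and then recover the general case as their gluings. The decisive leverage is the observation in Remark~\ref{localness of trancations of distributive algebras}: every algebra in $\Mri(\mathfrak{F}_{\nD})$ is in fact \emph{minimal} non-distributive, so deleting any arrow or relaxing any relation produces a distributive, hence representation-finite, algebra.

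Exploiting this minimality, the next step is to localize the non-distributivity. By Proposition~\ref{distributive algebras} some $e_y\Lambda e_x$ is non-uniserial, and minimality forces this branching to be as small as possible: a two-dimensional layer carrying the trivial local action, i.e.\ two linearly independent paths that become parallel. In the node-free setting these paths run from a source $a$ to a sink $z$, and minimality further forces every other vertex to lie on such a connecting path with no superfluous arrows or relations. This confines the possible shapes of $(Q,I)$ to a short list of combinatorial skeletons: several parallel paths $a\to z$, possibly subject to a single sum-zero relation, or such paths attached to one or two oriented cycles each carrying one quadratic relation. At this point I would invoke the structure theory of critical (tame concealed) algebras of Ringel and Bongartz to discard the skeletons that are representation-finite or wild.

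The hard part will be this last step: pinning down the precise parameter ranges, such as the condition $2=q\le p$ or $3=q\le p\le 5$ for the family $B$, and the admissible cycle lengths in $C,D,E$. These bounds are exactly the signature constraints forcing the Euler (Tits) form to be positive semidefinite of corank one; ruling out the positive definite (representation-finite) and the indefinite (wild) regimes requires the delicate Tits-form computations and covering-theoretic reductions underlying the classification of critical algebras. In other words, controlling the global condition of \emph{minimal representation-infiniteness}, rather than the purely local minimal non-distributivity isolated above, is the genuine obstacle, and it is here that the full weight of Bongartz's analysis in \cite{Bo2} is needed; the combinatorial localization only produces the finitely many skeletons on which that analysis must then be carried out.
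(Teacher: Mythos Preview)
The paper does not prove this statement at all: it is quoted verbatim from Bongartz \cite[Theorem~1]{Bo2} and used as a black box throughout, so there is no ``paper's own proof'' to compare your proposal against.

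That said, your sketch contains genuine errors that would block a proof even if one were attempted. First, your ``if'' direction asserts that each of the families $A$--$E$ is tame concealed (critical), but this is false: the quivers $C(p)$, $D(p,q)$ and $E(p,q,r)$ all contain oriented cycles, whereas tame concealed algebras are triangular. For the same reason the Tits-form signature argument you invoke at the end does not apply to these families, so the parameter bounds cannot be obtained that way. Second, in the ``only if'' direction you claim that after resolving nodes the two independent parallel paths witnessing non-distributivity ``run from a source $a$ to a sink $z$''; but the algebra $E(p,q,r)$ is node-free yet has \emph{no} source and \emph{no} sink (indeed, Theorem~\ref{Section:Introduction: tau-inf min-rep-inf non-dis} in this very paper characterizes the $\tau$-tilting infinite members of $\Mri(\mathfrak{F}_{\nD})$ precisely by the existence of a sink or source, and $E$ is $\tau$-tilting finite). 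So the combinatorial localization you propose cannot produce the skeleton $E$, and the subsequent case analysis is incomplete. The actual argument in \cite{Bo2} proceeds via covering theory and a delicate reduction that is not captured by your outline.
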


\section{Minimal $\tau$-tilting infinite algebras}\label{Section:Minimal tau-tilting infinite algebras}

As explained in \cite{Mo}, the notion of minimal $\tau$-tilting infinite algebras and our interests in their properties have been significantly motivated by the classical notion of minimal representation-infinite algebra and their prominent role in the representation theory.
It is evident that every min-rep-infinite algebra which is $\tau$-tilting infinite is in fact min-$\tau$-infinite. Since our ultimate goal behind the study of min-rep-infinite algebras is to approach a conceptual classification of min-$\tau$-infinite algebras, we begin this section with a general result on this new concept. In particular, we view min-$\tau$-infinite algebras as a modern counterpart of min-rep-infinite algebras and show a property that we first noticed in the family of min-rep-infinite algebras. 
We remark that this new notion was independently introduced in \cite{Mo} and \cite{Wa}, where the authors treated the $\tau$-tilting finiteness for two separate families of algebras.

Now we prove Theorem 1.1, which plays an important role in the rest of the paper. For the convenience of the reader, we state the theorem in the following.

\begin{customthm}{1.1}\label{no node prop}
If $k$ is an arbitrary field and $\Lambda=kQ/I$ is minimal $\tau$-tilting infinite,
then $\Lambda$ is node-free.
\end{customthm}
\begin{proof}
Suppose $\Lambda=kQ/I$ is min-$\tau$-infinite and let $x\in \node(\Lambda)$. Let $M$ be an indecomposable $\Lambda$-module. If there exists $\alpha \in x^{-}(Q_1)$ and $\beta \in x^{+}(Q_1)$, such that $M(\alpha)\neq 0$ and $M(\beta)\neq 0$, then $S_x$ belongs both to the top and the socle of $M$. Therefore, we can find a nonzero morphism $f \in \End_{\Lambda}(M)$ which is nilpotent. In particular, $M$ is not a brick.

By the ``brick-$\tau$-rigid correspondence" of Demonent-Iyama-Jasso \cite{DIJ}, there are infinitely many bricks in $\modu \Lambda$. Hence, for some $\gamma \in x^{-}(Q_1) \cup x^{+}(Q_1)$, there is an infinite family of bricks in $\modu (\Lambda)$, say $\mathcal{X}$, such that $X(\gamma)=0$, for each $X \in \mathcal{X}$. 
If $J$ is the ideal in $\Lambda$ generated by the arrow $\gamma$, then every $X$ in $\mathcal{X}$ is a brick in $\modu \Lambda/J$. This contradicts the assumption that $\Lambda$ is minimal $\tau$-tilting infinite.

We note that the proof does not need the assumption that $k$ is algebraically closed.
\end{proof}

Before we state a consequence of the previous theorem, let us remark that a similar proof could be used to show that an analogous statement holds in a more general setting. In particular, one can replace the bound quivers with the species subject to relations, in the sense of Gabriel \cite{Ga} and Dlab and Ringel \cite{DR}. Then, the notion of ``node" and its resolution can be defined verbatim and 
analogous results could be shown. Since we will not work in that generality, we leave the details to the reader. 

From the preceding proposition, the next corollary is immediate.

\begin{corollary}\label{gluing min-rep-inf is tau-fin}
Let $\Lambda$ be a minimal representation-infinite algebra. If $\Lambda'$ is obtained from $\Lambda$ via a sequence of gluings, then $\Lambda'$ is $\tau$-tilting finite.
\end{corollary}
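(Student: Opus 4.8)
The plan is to obtain the corollary directly from Theorem~\ref{no node prop}, the point being that gluing always produces a node while keeping us inside the class of minimal representation-infinite algebras. So first I would check that $\node(\Lambda')\neq\emptyset$. If $\Lambda'$ arises from $\Lambda$ by (at least one) gluing, consider the last gluing performed, which identifies a source $a$ with a sink $z$ into a vertex $x$: the vertex $x$ acquires the incoming arrows formerly ending at $z$ and the outgoing arrows formerly leaving $a$, so it is neither a sink nor a source, and by the very definition of gluing every composite $\beta\alpha$ with $\alpha\in x^{-}(Q'_1)$ and $\beta\in x^{+}(Q'_1)$ vanishes in $\Lambda'$; hence $x$ is a node. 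Since $x$ is neither a source nor a sink, no later gluing can involve $x$, and such a step alters neither the arrows at $x$ nor the relations among them, so $x$ remains a node. Thus $\node(\Lambda')\neq\emptyset$.

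Next I would use the standard fact, on which the classifications of Ringel \cite{Ri} and Bongartz \cite{Bo2} rest (see \cite[Section~2.3]{Bo2} and the discussion in Section~\ref{Priliminary}), that gluing a source with a sink takes minimal representation-infinite algebras to minimal representation-infinite algebras; so $\Lambda'$ is again minimal representation-infinite. Granting this, suppose for contradiction that $\Lambda'$ is $\tau$-tilting infinite. Then for every nonzero ideal $J$ of $\Lambda'$ the quotient $\Lambda'/J$ is representation-finite, hence brick-finite, hence $\tau$-tilting finite by \cite{DIJ}; therefore $\Lambda'$ is minimal $\tau$-tilting infinite. But then Theorem~\ref{no node prop} forces $\node(\Lambda')=\emptyset$, contradicting the previous paragraph. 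Hence $\Lambda'$ is $\tau$-tilting finite.

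I do not expect a real obstacle; the only input that has to be quoted rather than checked on the spot is that gluing preserves minimal representation-infiniteness. If one wishes to avoid quoting it, the argument of the proof of Theorem~\ref{no node prop} can be run directly on $\Lambda'$: its node $x$ yields an infinite family of bricks, all of which vanish on a single arrow $\gamma$ incident to $x$, so $\Lambda'/\langle\gamma\rangle$ is $\tau$-tilting infinite; but $\gamma$ is already an arrow of $Q$ (gluing creates no new arrows), passing to the quotient by $\langle\gamma\rangle$ commutes with the gluings performed (the one-step relations added by gluing are quadratic, so none of them equals $\gamma$, and killing $\gamma$ merely renders redundant the quadratic relations through $x$ that involve it), and $\Lambda/\langle\gamma\rangle$ is representation-finite because $\Lambda$ is minimal representation-infinite; as gluing preserves representation-finiteness (Section~\ref{Priliminary}), $\Lambda'/\langle\gamma\rangle$ is representation-finite, hence $\tau$-tilting finite --- a contradiction. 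The only mild bookkeeping in this variant is verifying that gluing commutes with quotienting by an arrow ideal.
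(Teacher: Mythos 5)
Your main argument coincides with the paper's own proof: gluing preserves minimal representation-infiniteness and produces a node, so if $\Lambda'$ were $\tau$-tilting infinite it would be minimal $\tau$-tilting infinite, contradicting Theorem~\ref{no node prop}; you simply spell out the node-existence and the min-$\tau$-infinite reduction that the paper leaves implicit. The alternative sketch at the end is an optional variant and not needed.
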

\begin{proof}
Since gluing an algebra via a sink and source preserves the representation type, $\Lambda'$ is also min-rep-infinite. If $\Lambda'$ is $\tau$-tilting infinite, then it must be min-$\tau$-infinite, which is impossible by Theorem \ref{no node prop}.
\end{proof}

Observe that the previous result could be used to carry out some investigations on the density of $\tau$-tilting finite algebras among the family of all min-rep-infinite algebras with the same number of simple modules. We do not pursue this direction of research in this note.

In the next proposition we slightly specialize a clever argument made by Bongartz in \cite[Section 1]{Bo1}, such that for certain algebras we can generate infinitely many non-isomorphic bricks of the same length. 
First we briefly recall the main idea.

Suppose $e,f$ are primitive idempotents in $\Lambda$ and let $\mathfrak{R}$ denote the radical of $f\Lambda e$ as $(f\Lambda f-e\Lambda e)$-bimodule. If $(\mathfrak{R}^i)$ is the radical filtration of $f\Lambda e$, either $\dim_k(\mathfrak{R}^i/\mathfrak{R}^{i+1})\leq 1$, for every $i \in \mathbb{Z}_{\geq 0}$, 
or there exists the smallest $l \in \mathbb{Z}_{\geq 0}$ such that for some
$u$ and $v$ in $\mathfrak{R}^l$, the image of $u$ and $v$ in $\mathfrak{R}^l/\mathfrak{R}^{l+1}$ are linearly independent.
From Proposition \ref{distributive algebras}, it follows that for non-distributive algebras there always exists a pair of idempotents $e$ and $f$ such that the latter case holds. Let $N$ be the radical of $\Lambda$ and put $\Lambda':=\Lambda/J$, where $J$ is the ideal generated by $\mathfrak{R}^{l+1}, Nu, uN, Nv, vN$.
Provided the above conditions hold, Bongartz \cite{Bo1} showed that the family of quotient modules $\Lambda' e'/\langle u'-\lambda v' \rangle$ consists of pairwise non-isomorphic indecomposables in $\modu \Lambda$, where $\lambda \in k$, and $e'$, $u'$ and $v'$ are respectively elements of $\Lambda'$ associated to $e$, $u$ and $v$. 
Without loss of generality, one can from the beginning assume $e,f,v$ and $u$ are such that $\Lambda e/\langle u-\lambda v \rangle$ is a one-parameter family of non-isomorphic indecomposable modules.

\begin{proposition}\label{Bongartz's trick for a family of modules}\cite[Section 1]{Bo1}
With the same notation as above, suppose $\Lambda$ is such that for a pair of primitive idempotents $e$ and $f$, we have $\dim_k(\mathfrak{R}^l/\mathfrak{R}^{l+1})>1$. If additionally $e\Lambda e\simeq k$, then  $M_{\lambda}:= \Lambda e /\langle u-\lambda v \rangle$ is a one-parameter family of bricks, where $\lambda \in k$.
\end{proposition}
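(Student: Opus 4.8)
The plan is to take the family $M_\lambda := \Lambda e/\langle u - \lambda v\rangle$ produced by Bongartz's construction (the indecomposables of the one-parameter family described just before the statement) and upgrade the conclusion from "pairwise non-isomorphic indecomposables" to "bricks", using the extra hypothesis $e\Lambda e \simeq k$. First I would recall the module-theoretic description of $M_\lambda$: since $e$ is a primitive idempotent, $\Lambda e$ is an indecomposable projective with top $S := \operatorname{top}(\Lambda e)$ the simple at the vertex of $e$; as $e\Lambda e \simeq k$, the simple $S$ occurs exactly once in $\Lambda e$, namely in the top, so $S$ does not appear in $\operatorname{rad}(\Lambda e)$. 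The element $u - \lambda v$ lies in $\mathfrak{R}^l \subseteq \operatorname{rad}(f\Lambda e) \subseteq \operatorname{rad}(\Lambda e)$ (after the reductions made just before the statement, so that $\Lambda e/\langle u-\lambda v\rangle$ is already the cut-down module with $\mathfrak{R}^{l+1}, Nu, uN, Nv, vN$ killed); hence the submodule $\langle u - \lambda v\rangle$ it generates is contained in $\operatorname{rad}(\Lambda e)$. Consequently $M_\lambda$ still has simple top $S$, and $S$ appears in $M_\lambda$ with multiplicity one.

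Next I would analyze $\operatorname{End}_\Lambda(M_\lambda)$. Because $M_\lambda$ has a simple top $S$ with $\operatorname{End}_\Lambda(S)\simeq k$, any endomorphism $\varphi$ of $M_\lambda$ is either surjective or has image inside $\operatorname{rad}(M_\lambda)$; in the surjective case $\varphi$ is an automorphism since $M_\lambda$ is finite-dimensional, and I claim it must then be a scalar. Indeed, an automorphism induces an automorphism of the top $S$, i.e.\ multiplication by some $c \in k^\times$, and then $\varphi - c\cdot\mathrm{id}$ is an endomorphism whose image lies in $\operatorname{rad}(M_\lambda)$, i.e.\ it is nilpotent (as $M_\lambda$ is finite length and $\operatorname{rad}(M_\lambda)^n = 0$ for some $n$). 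So it remains to rule out nonzero nilpotent (equivalently, non-invertible) endomorphisms. This is exactly where the structure of $M_\lambda$ coming from Bongartz's reductions is used: every endomorphism of $M_\lambda$ is determined by where it sends the generator $\bar e$ (the image of $e$), and $\bar e$ must go to an element $x$ of $M_\lambda$ with $\operatorname{rad}(\Lambda)\cdot x$ and the relation $u-\lambda v$ both behaving compatibly; since $S$ occurs only in the top, such $x$ must be a scalar multiple of $\bar e$ plus a radical element, and the quotient relations $Nu = uN = Nv = vN = 0$, $\mathfrak{R}^{l+1}=0$ force the radical part to vanish for the map to be well-defined and $\Lambda$-linear. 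Hence $\operatorname{End}_\Lambda(M_\lambda) \simeq k$, i.e.\ $M_\lambda$ is a brick, and since these modules are already known to be pairwise non-isomorphic, $\{M_\lambda\}_{\lambda\in k}$ is a one-parameter family of bricks.

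The main obstacle I anticipate is the last step: showing that a non-invertible endomorphism of $M_\lambda$ must be zero, rather than merely nilpotent. In general an indecomposable need not be a brick, so one genuinely needs to exploit the specific form of $M_\lambda$ — that it is cyclic generated in the top by $\bar e$, that $e\Lambda e\simeq k$ forces the generator's "degree-zero" component to be a scalar, and that Bongartz's quotient (killing $\mathfrak R^{l+1}, Nu, uN, Nv, vN$) makes the module uniserial-like enough above the relation that no nontrivial radical-valued endomorphism survives. I would make this precise by writing $M_\lambda = \Lambda\bar e$ and checking that $\operatorname{Hom}_\Lambda(M_\lambda, M_\lambda) = \{x \in M_\lambda : \operatorname{ann}_\Lambda(\bar e)\cdot x = 0\}$, then verifying $\operatorname{ann}_\Lambda(\bar e)$ contains enough of $\operatorname{rad}\Lambda$ (precisely because of the ideal $J$ defining $\Lambda'$ and the relation $u - \lambda v$) that the only solutions $x$ lie in $k\bar e$. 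The cleanest route is probably to import the relevant structural description of $M_\lambda$ directly from \cite[Section~1]{Bo1}, where the radical layers of $M_\lambda$ are computed, and simply observe that the hypothesis $e\Lambda e\simeq k$ removes the only obstruction (a possible non-scalar self-map through a second copy of $S$) to being a brick.
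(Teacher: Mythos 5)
The paper's own proof is a two-line argument: applying the contravariant functor $\Hom_{\Lambda}(-,M_{\lambda})$ to the canonical surjection $\pi\colon \Lambda e \twoheadrightarrow M_{\lambda}$ gives an embedding $\End_{\Lambda}(M_{\lambda}) \hookrightarrow \Hom_{\Lambda}(\Lambda e, M_{\lambda}) \simeq eM_{\lambda}$, and $eM_{\lambda}\simeq k$ because $e\Lambda e \simeq k$ and $u-\lambda v \in \rad(\Lambda e)$. Your first half assembles essentially the same ingredients (simple top $S$, multiplicity of $S$ in $M_{\lambda}$ equal to one), but the step where you rule out nonzero non-invertible endomorphisms — which is the whole content of the proposition — is where a genuine gap sits. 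You attribute that step to the relations $Nu=uN=Nv=vN=0$ and $\mathfrak{R}^{l+1}=0$ ``forcing the radical part of $\varphi(\bar e)$ to vanish for the map to be well-defined,'' but this is not a valid mechanism: well-definedness of $\bar e \mapsto x$ is exactly the condition $\ann_{\Lambda}(\bar e)\cdot x=0$, and the relations killed in Bongartz's reduction play no role in it; the relevant part of the annihilator is $\Lambda(1-e)$, not ``enough of $\rad\Lambda$.'' You then explicitly leave the step unresolved and propose to import structure from \cite{Bo1}, so as written the argument is incomplete at precisely the decisive point.

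The fix is short and uses only facts you already established. Since $\varphi(\bar e)=\varphi(e\bar e)=e\,\varphi(\bar e)$, any endomorphism sends the generator into $eM_{\lambda}$, and $\dim_k eM_{\lambda}\leq \dim_k e\Lambda e =1$ with $\bar e\neq 0$, so $eM_{\lambda}=k\bar e$ and $\varphi$ is a scalar; this is exactly the paper's embedding $\End_{\Lambda}(M_{\lambda})\hookrightarrow eM_{\lambda}\simeq k$. Equivalently, in your composition-series language: $[M_{\lambda}:S]=\dim_k eM_{\lambda}=1$ and the unique copy of $S$ is the top, so $S$ does not occur in $\rad M_{\lambda}$; a nonzero non-surjective endomorphism would have image a nonzero quotient of $M_{\lambda}$ (hence with top $S$) contained in $\rad M_{\lambda}$, a contradiction, and then $\varphi-c\cdot\mathrm{id}$ vanishes for the scalar $c$ induced on the top. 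Either formulation closes your gap without any appeal to the specific ideal $\langle \mathfrak{R}^{l+1}, Nu, uN, Nv, vN\rangle$ or to the radical-layer structure of $M_{\lambda}$ from \cite{Bo1}; those reductions are only needed for the pairwise non-isomorphism, which you correctly quote rather than reprove.
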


\begin{proof}
Suppose $e$, $u$ and $v$ are as above. Then, by applying the contraviariant functor $\Hom_{\Lambda}(-,M_\lambda)$ on the canonical surjective morphism $\pi: \Lambda e \twoheadrightarrow M_{\lambda}$ in $\modu \Lambda$, we get the embedding $\End_{\Lambda}(M_{\lambda}) \hookrightarrow \Hom_{\Lambda}(\Lambda e,M_{\lambda})\simeq e M_{\lambda}$. Because $e\Lambda e \simeq k$, we have $e M_{\lambda} \simeq k$ and therefore the assertion follows.
\end{proof}

From the brick-$\tau$-rigid correspondence it follows that if $\Lambda$ satisfies the conditions of the previous theorem, then it is $\tau$-tilting infinite.
Note that the condition $\dim_k(\mathfrak{R}^l/\mathfrak{R}^{l+1})>1$ in the assertion of proposition guarantees that the elements $u$ and $v$ in $\mathfrak{R}^l$ are $\Lambda$-independent, meaning that $u\neq pv$ and $pu\neq v$, for any $p \in \Lambda$.
We should also remark that a similar version of the preceding statement has been employed in \cite{CKW}.

\section{$\tau$-tilting infinite minimal representation-infinite algebras}\label{Section:tau-tilting infinite minimal representation-infinite algebras}

In this section we focus our attention on the family of minimal representation-infinite non-distributive algebras and analyze them from the viewpoint of $\tau$-tilting theory. As a byproduct of showing which algebras in $\Mri(\mathfrak{F}_{\nD})$ are $\tau$-tilting finite and which ones are not, we obtain an infinite family of minimal $\tau$-tilting infinite algebras. This should pave the way for the full classification of min-$\tau$-infinite non-distributive algebras, which we will explore in our future work.

We begin this section by deriving a consequence of 
Theorem \ref{no node prop}.
Thanks to the Bongartz classification in Theorem \ref{Thm:Bongartz classification of min-rep-inf non-distributive}, the following proposition immediately follows from Corollary \ref{gluing min-rep-inf is tau-fin}, thus we omit the proof. 

\begin{proposition}
Let $\Lambda=kQ/I$ be a minimal representation-infinite non-distributive algebra. If $(Q,I)$ has a node, then $\Lambda$ is $\tau$-tilting finite. In particular, every algebra given by gluing of a quiver in Figure \ref{fig:non-distributive quivers} is $\tau$-tilting finite.
\end{proposition}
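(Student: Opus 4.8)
\emph{Proof sketch.} The plan is to reduce the first assertion to Theorem~\ref{no node prop} and then to upgrade it to the ``in particular'' clause by means of Bongartz's classification. For the first assertion I would argue by contradiction. Suppose $\Lambda=kQ/I$ is minimal representation-infinite and non-distributive, that $(Q,I)$ has a node, and yet that $\Lambda$ is $\tau$-tilting infinite. Because $\Lambda$ is min-rep-infinite, every proper quotient $\Lambda/J$ with $0\neq J$ an ideal is rep-finite, hence $\tau$-tilting finite; therefore $\Lambda$ is minimal $\tau$-tilting infinite. Theorem~\ref{no node prop} then forces $\node(\Lambda)=\emptyset$, contradicting the hypothesis. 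Hence $\Lambda$ is $\tau$-tilting finite. (This is exactly the content of Corollary~\ref{gluing min-rep-inf is tau-fin} once one knows, via the next step, that such a $\Lambda$ is a gluing of a quiver in Figure~\ref{fig:non-distributive quivers}; either route works, and I would present the one above since it does not invoke the classification.)

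For the ``in particular'' clause I would cite Theorem~\ref{Thm:Bongartz classification of min-rep-inf non-distributive}. By the very definition of gluing a source $a$ with a sink $z$ in a bound quiver, the resulting algebra has a vertex $x$ at which every length-two composition $\beta\alpha$ with $\alpha\in z^{-}(Q_1)$ and $\beta\in a^{+}(Q_1)$ lies in the ideal, so $x$ is a node. Moreover gluing preserves the representation type, so any algebra obtained by gluing a quiver of Figure~\ref{fig:non-distributive quivers} is again minimal representation-infinite and non-distributive, and it has a node; the first assertion then applies and yields $\tau$-tilting finiteness.

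I do not anticipate a genuine obstacle: the combinatorial heart of the matter has already been isolated in Theorem~\ref{no node prop}, and what remains is a direct appeal to Bongartz's classification together with the elementary observation that gluing creates a node. The one point I would still take care to record explicitly — although it is not logically needed for the statement as phrased — is that the five base families $A(p,q)$, $B(p,q)$, $C(p)$, $D(p,q)$, $E(p,q,r)$ of Figure~\ref{fig:non-distributive quivers} are themselves node-free: the generators of $I_A,\dots,I_E$ are either zero, a single long relation, or quadratic relations supported on loops or oriented cycles (namely $\rho_1\rho_p$, $\alpha_1\alpha_p$, $\gamma_1\gamma_q$), none of which forces a length-two path through an interior vertex at which arrows from two distinct branches meet. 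This makes transparent that, within $\Mri(\mathfrak{F}_{\nD})$, the condition ``$(Q,I)$ has a node'' is equivalent to ``$(Q,I)$ is a proper gluing of a base quiver,'' which is the form in which the result will be used in the sequel.
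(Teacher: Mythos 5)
Your argument is correct and is essentially the paper's: the paper derives this proposition from Corollary \ref{gluing min-rep-inf is tau-fin}, whose proof is exactly your chain ``min-rep-infinite and $\tau$-tilting infinite $\Rightarrow$ minimal $\tau$-tilting infinite $\Rightarrow$ node-free by Theorem \ref{no node prop}, contradiction,'' with Bongartz's classification (Theorem \ref{Thm:Bongartz classification of min-rep-inf non-distributive}) and the observation that gluing creates a node handling the ``in particular'' clause. Your rearrangement (proving the node statement directly, then specializing to gluings) is only a cosmetic difference, so no further comment is needed.
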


Let us recall an important result from \cite{Mo} which gives a non-trivial class of $\tau$-tilting finite biserial algebras. In fact, in the aforementioned work it is shown that an infinite family of rep-infinite string algebras, which were christened ``wind wheel" by Ringel \cite{Ri}, are always $\tau$-tilting finite. In particular, the bound quivers $E(p,q,r)$ in Figure \ref{fig:non-distributive quivers} form a very special case of wind wheel algebras. 
We also remark that $A(p,q)$ and $E(p,q,r)$, with the specified orientations and condition given in Figure \ref{fig:non-distributive quivers}, are the only classes of algebras that are common between the families $\Mri(\mathfrak{F}_{\sB})$ and $\Mri(\mathfrak{F}_{\nD})$. Moreover, the characterization of the third family of min-rep-infinite algebras in \cite{Bo2}, which are denoted by $\Mri(\mathfrak{F}_{\gC})$ in the introduction, allows us to divide the family of all min-rep-infinite algebras into disjoint subfamilies. This is because $\big( \Mri(\mathfrak{F}_{\sB}) \cup \Mri(\mathfrak{F}_{\nD}) \big ) \cap \Mri({\mathfrak{F}_{\gC}})= \emptyset$. As mentioned earlier, $\tau$-tilting finiteness of algebras in $\Mri(\mathfrak{F}_{\gC})$ will be treated in our future work.

For further details on wind wheel algebras and a proof of a more general version of the next theorem, see \cite[Section 5]{Mo}.

\begin{proposition}\cite[Proposition 5.1]{Mo}
Suppose $\Lambda$ is an algebra given by the bound quiver $E(p,q,r)$ in Figure \ref{fig:non-distributive quivers}. Then, for every arbitrary choice of $p,q$ and $r$ in $\mathbb{Z}_{>0}$, $\Lambda$ is $\tau$-tilting finite.
\end{proposition}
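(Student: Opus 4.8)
The plan is to show that any algebra $\Lambda$ with bound quiver $E(p,q,r)$ is $\tau$-tilting finite by invoking the brick-$\tau$-rigid correspondence of \cite{DIJ}, so it suffices to prove that $\modu\Lambda$ contains only finitely many isomorphism classes of bricks. Since $E(p,q,r)$ is a string algebra (the relations $\alpha_1\alpha_p$, $\gamma_1\gamma_q$, $\gamma_1\theta_r\cdots\theta_1\alpha_p$ are monomial and the arrow configuration at each vertex satisfies the special biserial conditions), every indecomposable $\Lambda$-module is either a string module $M(w)$ or a band module $M(b,\lambda,n)$. Band modules are never bricks once $n\geq 2$, and for $n=1$ a band module $M(b,\lambda,1)$ has endomorphism ring strictly larger than $k$ unless one checks it carefully; in any case the one-parameter families coming from bands are the only source of infinitely many indecomposables, so the crux is to understand which strings can possibly give bricks.

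First I would recall the combinatorial criterion for a string module $M(w)$ to be a brick: $\End_\Lambda(M(w))\simeq k$ if and only if $w$ admits no nontrivial ``graph map'' to itself, equivalently $w$ cannot be written with a proper substring appearing as both a quotient-type and a sub-type factor in a compatible way. Then I would analyze the shape of reduced walks in $E(p,q,r)$. The quiver consists of two oriented cycles — one through the $\alpha_i$'s (of length $p$, say based at the vertices carrying $\alpha_1\alpha_p\in I$) and one through the $\gamma_j$'s (of length $q$) — joined by the oriented path $\theta_r\cdots\theta_1$, together with the relation $\gamma_1\theta_r\cdots\theta_1\alpha_p\in I$ which ``cuts'' the composite long path. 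The key point, already exploited for general wind wheel algebras in \cite[Section 5]{Mo}, is that the relations force any sufficiently long string to repeat a full traversal of one of the two cycles, and such a repetition produces a self-map of $M(w)$ that is not a scalar; hence bricks correspond only to strings of bounded length, of which there are finitely many. In more detail, a string that wraps around the $\alpha$-cycle (or $\gamma$-cycle) twice contains two disjoint copies of the ``rim'' substring, and the obvious partial identification of these copies — sending one rim onto the other — is a nonzero non-invertible, non-scalar endomorphism, killing the brick property. This confines candidate brick strings to those winding around each cycle at most once and crossing the bridge $\theta$-path at most once in each direction, a finite set.

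The cleanest route, however, is simply to cite \cite[Proposition 5.1]{Mo}: the statement to be proved is an immediate specialization, since $E(p,q,r)$ is explicitly identified in the text as a special case of the wind wheel algebras treated there, and that result asserts $\tau$-tilting finiteness of all wind wheels. So the body of the proof can be as short as observing that $E(p,q,r)$ is a wind wheel algebra in the sense of \cite{Ri} (one must match the defining data: the two rims are the $\alpha$- and $\gamma$-cycles, the handle is the $\theta$-path, and $I_E$ is exactly the wind wheel relation ideal) and then quoting \cite[Section 5]{Mo}. I would state this reduction and leave the detailed string combinatorics to the reference.

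The main obstacle I anticipate is not conceptual but bookkeeping: verifying precisely that the relation $\gamma_1\theta_r\cdots\theta_1\alpha_p$ together with $\alpha_1\alpha_p$ and $\gamma_1\gamma_q$ really does cut every ``would-be long'' string, i.e. that one cannot sneak an arbitrarily long non-repeating reduced walk through the bridge by alternating directions. This requires a careful case analysis of how a reduced walk can enter and leave each cycle through the unique gluing vertices, and checking that the forbidden compositions genuinely obstruct indefinite elongation while still allowing the one-parameter band family (which is what makes $\Lambda$ representation-infinite in the first place). Since \cite{Mo} carries this out in the more general wind wheel setting, in this note it is enough to point to that argument; if one wanted a self-contained proof, this is where the real work would lie.
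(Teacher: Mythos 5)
Your recommended route---observe that $E(p,q,r)$ is a special case of the wind wheel algebras of \cite{Ri} and quote \cite[Proposition 5.1]{Mo} (via the brick-$\tau$-rigid correspondence of \cite{DIJ})---is exactly what the paper does: the proposition is stated as a citation to \cite{Mo}, with the string-combinatorial work on bricks deferred entirely to \cite[Section 5]{Mo}, and no independent proof is given here. So your proposal matches the paper's approach; the preliminary sketch of the string/band analysis is only supporting intuition and is likewise left to the reference.
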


Observe that from the last two propositions, the list of algebras in $\Mri(\mathfrak{F}_{\nD})$ that we need to analyze with respect to $\tau$-tilting finiteness reduces to those given by the bound quivers $A(p,q)$, $B(p,q)$, $C(p)$ and $D(p,q)$ in Figure \ref{fig:non-distributive quivers}.
The algebras of type $A(p,q)$ are hereditary and it is well-known that they always admit preprojective and preinjective components in their Auslander-Reiten quivers (which are infinite). As remarked in \cite{Mo}, from \cite[VIII.2.7]{ASS} it follows that every rep-infinite algebra which admits a preprojective or preinjective component is $\tau$-tilting infinite. 
In the following proposition we treat the $\tau$-tilting finiteness of the remaining cases.

For the sake of brevity, in the following we say $p$ and $q$ in $\mathbb{Z}_{>0}$ are \emph{admissible} if they satisfy the conditions specified by the bound quivers of Figure \ref{fig:non-distributive quivers}.
\begin{proposition}\label{tau-inf non-dist are schur-rep-inf}
For every admissible choice of $p$ and $q$ in $\mathbb{Z}_{>0}$, the algebras $B(p,q)$, $C(p)$ and $D(p,q)$ are $\tau$-tilting infinite. 
\end{proposition}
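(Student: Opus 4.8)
The plan is to exhibit, for each of the three families $B(p,q)$, $C(p)$, $D(p,q)$, a one-parameter family of pairwise non-isomorphic bricks, and then invoke the brick-$\tau$-rigid correspondence of \cite{DIJ} to conclude $\tau$-tilting infiniteness. The natural tool is Proposition \ref{Bongartz's trick for a family of modules}: it suffices to locate a pair of primitive idempotents $e$ and $f$ with $e\Lambda e \simeq k$ (which holds automatically whenever $e$ corresponds to a vertex on which there is no loop and no oriented cycle through it, in particular for the sink $z$ or for the source $a$, since these quivers have their cycles avoiding those vertices), together with elements $u,v$ in the same radical layer $\mathfrak{R}^l$ of $f\Lambda e$ whose images in $\mathfrak{R}^l/\mathfrak{R}^{l+1}$ are linearly independent. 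The existence of such $e$, $f$, $u$, $v$ is precisely what non-distributivity gives us, via Proposition \ref{distributive algebras} and the discussion preceding Proposition \ref{Bongartz's trick for a family of modules}; so the real content is to make this choice explicit and compatible with the relations $I_B$, $I_C$, $I_D$.

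First I would handle $C(p)$, which is the cleanest: take $e=e_a$ (the source, so $e\Lambda e\simeq k$ since no cycle passes through $a$) and $f=e_z$ (the sink). The space $e_z\Lambda e_a$ contains the class of the path $\beta\alpha$ through the middle vertex, and also the class of the long path going around the $p$-cycle attached to the sink and source; because $I_C = \langle \rho_1\rho_p\rangle$ kills only one composition around that cycle, these two paths land in the same radical layer and are linearly independent there. I would then read off the resulting one-parameter family $M_\lambda = \Lambda e_a/\langle u - \lambda v\rangle$ and check $eM_\lambda\simeq k$, which is immediate from $e\Lambda e\simeq k$. For $D(p,q)$ the relation $\gamma_{q+1}\cdots\gamma_1 - \beta\alpha$ identifies the path through the $\gamma$-branch with $\beta\alpha$, so I would instead pair the path $\beta\alpha$ (equivalently the $\gamma$-path) against the long path around the cycle, exactly as in the $C(p)$ case; the extra $\gamma$-branch does not obstruct $e_a\Lambda e_a\simeq k$ and does not affect the linear independence in the relevant layer. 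For $B(p,q)$ the source $a$ has three outgoing branches $\alpha$, $\beta$, $\gamma$ reaching the sink $z$, and the single relation $\gamma_2\gamma_1 + \beta_q\cdots\beta_1 + \alpha_p\cdots\alpha_1$ is a \emph{single} linear relation among the three branch-paths from $a$ to $z$; hence in $e_z\Lambda e_a$ the three path classes span a space that is still $2$-dimensional modulo the relation, so any two of them (say the $\alpha$-branch and the $\beta$-branch) give $\Lambda$-independent elements $u,v$ in the appropriate layer, and again $e_a\Lambda e_a\simeq k$ because no oriented cycle passes through $a$.

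The main obstacle I anticipate is bookkeeping rather than conceptual: one must verify that the chosen $u$ and $v$ genuinely lie in the \emph{same} radical layer $\mathfrak{R}^l$ of $f\Lambda e$ and are linearly independent in $\mathfrak{R}^l/\mathfrak{R}^{l+1}$ — this requires matching the lengths of the competing paths, which in $B(p,q)$ and $D(p,q)$ depends on the parameters $p,q$, so I would either arrange $u,v$ to have equal path-length by construction or appeal to the general position of Proposition \ref{Bongartz's trick for a family of modules} (where $l$ is \emph{defined} as the smallest layer where a two-dimensional jump occurs, so non-distributivity alone guarantees some such $l$ exists, and one only needs to confirm it is realized by idempotents with $e\Lambda e\simeq k$). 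The cleanest write-up is to cite the paragraph before Proposition \ref{Bongartz's trick for a family of modules}: for each of $B(p,q)$, $C(p)$, $D(p,q)$ we exhibit the non-distributive ``defect'' $\dim_k(\mathfrak{R}^l/\mathfrak{R}^{l+1}) > 1$ at a pair $(e,f)$ with $e$ the source $a$ (or the sink $z$) so that $e\Lambda e\simeq k$, and then Proposition \ref{Bongartz's trick for a family of modules} produces the one-parameter family of bricks, whence $\tau$-tilting infiniteness follows from \cite{DIJ}.
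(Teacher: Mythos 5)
Your proposal is correct and follows essentially the same route as the paper: take $e=e_a$, $f=e_z$ (so $e_a\Lambda e_a\simeq k\simeq e_z\Lambda e_z$), exhibit two $\Lambda$-independent elements $u,v\in e_z\Lambda e_a$, and apply Proposition \ref{Bongartz's trick for a family of modules} together with the brick-$\tau$-rigid correspondence of \cite{DIJ}; the paper simply asserts that such $u,v$ ``obviously exist,'' whereas you spell them out case by case. Your worry about matching radical layers is moot: since $e_a\Lambda e_a\simeq k\simeq e_z\Lambda e_z$, the bimodule radical $\mathfrak{R}$ of $e_z\Lambda e_a$ is zero, so the hypothesis reduces to $\dim_k e_z\Lambda e_a\geq 2$, which your explicit pairs of path classes verify.
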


\begin{proof}
If $\Lambda=kQ/I$ is any of the algebras $B(p,q)$, $C(p)$ and $D(p,q)$ given by the bound quivers in Figure \ref{fig:non-distributive quivers}, there exist unique source $a$ and unique sink $z$ in the quiver $Q$ such that $e_a \Lambda e_a \simeq k \simeq e_z \Lambda e_z$. 

From the bound quiver of the algebras $B(p,q)$, $C(p)$ and $D(p,q)$, there obviously exist $u$ and $v$ in $\Lambda e_a$ that satisfy the conditions of Proposition \ref{Bongartz's trick for a family of modules}. Hence, for $\lambda \in k$, the family of $\Lambda$-modules given by $M_{\lambda}:= \Lambda e_a /\langle u-\lambda v \rangle$ consists of pairwise non-isomorphic bricks and we are done by brick-$\tau$-rigid correspondence in \cite{DIJ}.
\end{proof}

The following theorem summarizes our results on the $\tau$-tilting finiteness of the family $\Mri(\mathfrak{F}_{\nD})$. In particular, it proves Theorem \ref{Introduction: bound quivers of tau-inf}.

\begin{theorem}\label{tau-inf non-distributive Thm}
Let $\Lambda=kQ/I$ be a minimal representation-infinite algebra which is non-distributive. Then, $\Lambda$ is $\tau$-tilting infinite if and only if the bound quiver $(Q,I)$ is of type $A(p,q)$, $B(p,q)$, $C(p)$ or $D(p,q)$ in Figure \ref{fig:non-distributive quivers}, with an admissible choice of $p$ and $q$ in $\mathbb{Z}_{>0}$.
\end{theorem}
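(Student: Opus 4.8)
The plan is to assemble the theorem directly from the partial results already proved in this section, using Bongartz's classification (Theorem \ref{Thm:Bongartz classification of min-rep-inf non-distributive}) as the organizing principle. Since every minimal representation-infinite non-distributive algebra is either one of the bound quivers $A(p,q)$, $B(p,q)$, $C(p)$, $D(p,q)$, $E(p,q,r)$ of Figure \ref{fig:non-distributive quivers}, or an algebra obtained from one of these by a sequence of gluings, I would split the argument according to which case we are in and settle each case by quoting the appropriate earlier proposition.

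\textbf{The ``if'' direction.} Suppose $(Q,I)$ is of type $A(p,q)$, $B(p,q)$, $C(p)$ or $D(p,q)$ with an admissible choice of parameters. For type $A(p,q)$: these algebras are hereditary (since $I_A = 0$), and being representation-infinite they admit a preprojective (equivalently, preinjective) component in their Auslander--Reiten quiver; by \cite[VIII.2.7]{ASS} this forces $\tau$-tilting infiniteness, exactly as noted in the paragraph preceding Proposition \ref{tau-inf non-dist are schur-rep-inf}. For types $B(p,q)$, $C(p)$, $D(p,q)$: this is precisely the content of Proposition \ref{tau-inf non-dist are schur-rep-inf}, where one produces a one-parameter family of bricks $M_\lambda = \Lambda e_a/\langle u - \lambda v\rangle$ via Bongartz's trick (Proposition \ref{Bongartz's trick for a family of modules}) using the source $a$ with $e_a\Lambda e_a \simeq k$, and then invokes the brick-$\tau$-rigid correspondence of \cite{DIJ}. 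So in all four families $\Lambda$ is $\tau$-tilting infinite.

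\textbf{The ``only if'' direction.} Conversely, assume $\Lambda$ is minimal representation-infinite, non-distributive, and $\tau$-tilting infinite. By Theorem \ref{Thm:Bongartz classification of min-rep-inf non-distributive}, $(Q,I)$ is one of $A,B,C,D,E$ of Figure \ref{fig:non-distributive quivers}, or is obtained from one of these by gluing. First, if $\Lambda$ were obtained by a (nonempty) sequence of gluings, then by Corollary \ref{gluing min-rep-inf is tau-fin} it would be $\tau$-tilting finite, contradicting our hypothesis; equivalently, any gluing introduces a node, and a $\tau$-tilting infinite algebra that is moreover minimal representation-infinite is minimal $\tau$-tilting infinite, hence node-free by Theorem \ref{no node prop}. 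So $(Q,I)$ is one of $A,B,C,D,E$ itself. Next, type $E(p,q,r)$ is excluded: by Proposition \cite[Proposition 5.1]{Mo}, every algebra of type $E(p,q,r)$ is $\tau$-tilting finite (it is a wind-wheel algebra), again contradicting the hypothesis. Hence $(Q,I)$ must be of type $A(p,q)$, $B(p,q)$, $C(p)$ or $D(p,q)$, with admissible parameters, as claimed.

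Since every ingredient has already been established earlier in the paper, I expect no genuine obstacle here — the work is bookkeeping. The one point that needs a little care is making sure the gluing case is handled cleanly: one must check that a glued algebra is still minimal representation-infinite (so that Theorem \ref{no node prop} or Corollary \ref{gluing min-rep-inf is tau-fin} applies) — but this is exactly the remark in Remark \ref{localness of trancations of distributive algebras} together with the fact that gluing preserves representation type, so it is already in hand. Thus the proof is essentially a one-line appeal: ``Combine Corollary \ref{gluing min-rep-inf is tau-fin}, Proposition \cite[Proposition 5.1]{Mo}, the hereditary/preprojective argument for type $A$, and Proposition \ref{tau-inf non-dist are schur-rep-inf}.''
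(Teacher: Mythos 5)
Your proposal is correct and follows essentially the same route as the paper: the theorem is indeed obtained by combining Bongartz's classification with Corollary \ref{gluing min-rep-inf is tau-fin} (glued algebras have a node, hence are $\tau$-tilting finite), the wind-wheel result \cite[Proposition 5.1]{Mo} for type $E(p,q,r)$, the preprojective-component argument for the hereditary type $A(p,q)$, and Proposition \ref{tau-inf non-dist are schur-rep-inf} for types $B(p,q)$, $C(p)$, $D(p,q)$. The paper presents the theorem precisely as a summary of these earlier results, so your assembly matches its intended proof.
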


\section{Nontrivial sufficient conditions for $\tau$-tilting infiniteness}\label{Section:non-trivial sufficient conditions for tau-inf}

In this section we use our new results on minimal representation-infinite algebras to give simple characterizations of $\tau$-tilting infinite algebras in the two families $\Mri(\mathfrak{F}_{\sB})$ and $\Mri(\mathfrak{F}_{\nD})$.
As explained in the following, such characterizations give explicit sufficient conditions for $\tau$-tilting infiniteness of a very large family of arbitrary algebras. 

For an algebra $\Lambda$, we will write $\Mri(\Lambda)$ for the set of all quotient algebras of $\Lambda$ which are minimal representation-infinite. 
Obviously,  $\Mri(\Lambda)=\emptyset$ if and only if $\Lambda$ is rep-finite. 
Observe that for each $\Lambda' \in \Mri (\Lambda)$, the category $\modu \Lambda'$ is a full additive subcategory of $\modu \Lambda$.
Thus, if there exists $\Lambda' \in  \Mri(\Lambda)$ which is $\tau$-tilting infinite, then $\Lambda$ is also $\tau$-tilting infinite. Due to the explicit description of algebras in $\Mri(\mathfrak{F}_{\sB})$ and $\Mri(\mathfrak{F}_{\nD})$, the following characterizations will allow us to check the bound quiver of an arbitrary algebra $\Lambda=kQ/I$ and by finding certain subquivers in $(Q,I)$ verify whether it is $\tau$-tilting infinite.

In the following, we use $\Mri(\mathfrak{F}_{\B})$ to denote the family of all min-rep-infintie biserial algebras.
As mentioned earlier, in \cite{Mo} we used the results of Ringel \cite{Ri} and gave a full classification of algebras in $\Mri(\mathfrak{F}_{\B})$ with respect to $\tau$-tilting finiteness. This is because over algebraically closed fields, we have the equality $\Mri(\mathfrak{F}_{\B})=\Mri(\mathfrak{F}_{\sB})$ (see \cite[Section 8.1]{Mo}).
Moreover, we proved that $\Lambda \in \Mri(\mathfrak{F}_{\B})$ is $\tau$-tilting infinite if and only if $\Lambda$ is a gentle algebra \cite[Theorem 1.4]{Mo}.
Moreover, we achieved an explicit family of minimal $\tau$-tilting infinite algebras which are not min-rep-infinite \cite[Theorem 1.2]{Mo}. This proved that our new generalization of this classical notion to ``minimal $\tau$-tilting infinite" is novel and deserves thorough investigations. In fact, a classification of min-$\tau$-infinite special biserial algebra will appear in our forthcoming work.

To begin the comparison between the $\tau$-tilting infinite algebras in $\Mri(\mathfrak{F}_{\sB})$ and those from $\Mri(\mathfrak{F}_{\nD})$, we recall the following theorem from our previous work.

\begin{theorem}\cite[Corollary 6.7]{Mo}
Let $\Lambda=kQ/I$ be a minimal representation-infinite biserial algebra. Then, $\Lambda$ is $\tau$-tilting infinite if and only if $\Lambda=k\widetilde{\mathbb{A}}_m$ (for some $m\in \mathbb{Z}_{>0}$), or $(Q,I)$ is a bound quiver of the following form:

\begin{center}
\begin{tikzpicture}
 \draw [->] (1.25,0.75) --(2,0.1);
    \node at (1.7,0.55) {$\alpha$};
 \draw [<-] (1.25,-0.75) --(2,-0.05);
    \node at (1.7,-0.5) {$\beta$};
  \draw [dashed] (1.25,0.75) to [bend right=100] (1.25,-0.75);
   \node at (1.3,0) {$C_L$};
    \node at (2,0) {$\circ $};
    \node at (2.1,-0.2) {$x$};
    
    \draw [dotted,thick] (1.65,-0.25) to [bend right=50](1.65,0.35);
 \draw [dashed] (2.05,0) --(4.7,0);
 
 \node at (3.4,0.3) {$\overbrace{\qquad \qquad\quad \qquad}^{\mathfrak{b}}$};
 
 \node at (4.75,0.0) {$\circ$};
 \draw [<-] (5.5,0.75) --(4.75,0.05);
    \node at (5,0.45) {$\delta$};
 \draw [->] (5.50,-0.8) --(4.8,-0.05);
    \node at (5,-0.55) {$\gamma$};
  \draw [dashed] (5.55,0.8) to [bend left=100] (5.55,-0.8);
   \node at (5.6,0) {$C_R$};
   \node at (4.65,-0.2) {$y$};
   
\draw [dotted,thick] (5.15,-0.3) to [bend left=50](5.15,0.3);
\end{tikzpicture}
\end{center}
where the bar $\mathfrak{b}$ is a non-uniserial copy of $\mathbb{A}_m$ (for some $m \in \mathbb{Z}_{>0}$), and $ I= \langle \beta \alpha , \delta \gamma \rangle$.
\end{theorem}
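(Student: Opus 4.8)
The plan is to combine the gentleness criterion already recalled (Theorem \ref{Section:Introduction: tau-inf min-rep-inf bis}, i.e.\ \cite[Theorem 1.4]{Mo}) with Ringel's structural description \cite{Ri} of the minimal representation-infinite special biserial algebras, using that over the algebraically closed field $k$ one has $\Mri(\mathfrak{F}_{\B})=\Mri(\mathfrak{F}_{\sB})$. By that criterion, $\Lambda$ is $\tau$-tilting infinite exactly when it is gentle, and then either $\Lambda=k\widetilde{\mathbb{A}}_m$ or its admissible ideal $I$ is generated by precisely two quadratic relations. So it suffices to identify the gentle members of $\Mri(\mathfrak{F}_{\B})$: one must check that $k\widetilde{\mathbb{A}}_m$ and the displayed two-clock algebras are gentle (yielding the ``if'' direction via the cited theorem --- for $k\widetilde{\mathbb{A}}_m$ one may instead observe that it is tame hereditary, hence has a preprojective component and is $\tau$-tilting infinite by \cite[VIII.2.7]{ASS}), and conversely that a gentle algebra in $\Mri(\mathfrak{F}_{\B})$ whose ideal genuinely needs two quadratic relations must have bound quiver of the displayed shape.

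For the first task, gentleness of the two-clock quiver is a local verification at each vertex: at $x$ the incident arrows are only $\alpha$ (incoming), $\beta$ (outgoing) and the single bar-arrow at $x$, with $\beta\alpha$ the unique relation through $x$, and the defining conditions of a gentle algebra --- at most two arrows in and two out at each vertex, at most one arrow prolonging a given one on each side without landing in $I$, all relations quadratic --- hold by inspection; the same is true at $y$ and at every interior vertex of the bar $\mathfrak{b}$, which is just an orientation of $\mathbb{A}_m$. I would also record $\tau$-tilting infiniteness directly: the quiver supports a band $w$ running once around $C_L$, crossing $\mathfrak{b}$ from $x$ to $y$, once around $C_R$, and back across $\mathfrak{b}$ to $x$ --- the relations $\beta\alpha$ and $\delta\gamma$ being precisely what prevents $w$ from closing up inside a single cycle, which makes $w$ essentially the only band --- and by the standard calculus of homomorphisms between string and band modules the band modules $M(w,\lambda)$ with $\lambda\in k^{\times}$ are pairwise non-isomorphic bricks of one fixed dimension vector; hence $\Lambda$ is brick-infinite and so $\tau$-tilting infinite by \cite{DIJ}.

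The converse is where I expect the real work to lie. Starting from Ringel's analysis \cite{Ri}, a minimal representation-infinite string algebra not of the form $k\widetilde{\mathbb{A}}_m$ possesses an essentially unique band $w$, and minimality forces $(Q,I)$ to be the support of $w$ together with exactly the relations that cut the representation type down to infinite-but-minimal. When $I$ is not generated by a single quadratic relation, one shows that $w$ reverses direction at precisely two vertices $x$ and $y$; the two quadratic relations sit at $x$ and $y$, the maximal directed subwalks of $w$ through $x$ and through $y$ complete to directed cycles $C_L$ and $C_R$ whose relation is exactly the obstruction to a shorter band, and the remaining arc of $w$ between $x$ and $y$ is traversed once in each direction, hence is a walk --- an orientation of some $\mathbb{A}_m$ --- which must be non-uniserial (otherwise $w$ would not be reduced, or $\Lambda$ would fail to be minimal). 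A last gentleness-and-minimality check excludes any further relations, and a harmless change of presentation brings $I$ into the form $\langle\beta\alpha,\delta\gamma\rangle$ with the arrows labelled as in the figure. The delicate points are the uniqueness of $w$ and the exclusion of the degenerate configurations --- the bar collapsing to a single vertex, or one of the clocks being absent or uniserial --- each of which I would settle by playing minimality of the representation type against Ringel's explicit list in \cite{Ri}.
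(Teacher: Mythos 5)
Your proposal is essentially the paper's own route: the paper does not reprove this statement but imports it verbatim as \cite[Corollary 6.7]{Mo}, and your reconstruction follows exactly the strategy of \cite{Mo} that the paper relies on elsewhere --- Ringel's classification \cite{Ri} of minimal representation-infinite special biserial algebras into $k\widetilde{\mathbb{A}}_m$, barbells, and wind wheels, combined with the gentleness criterion and the brick-$\tau$-rigid correspondence of \cite{DIJ}, with $\tau$-tilting infiniteness of the barbell witnessed by the one-parameter family of band-module bricks on the unique band. The two places where your sketch is thin (the verification that the band modules $M(w,\lambda)$ are bricks, and the exclusion of degenerate bar/clock configurations) are precisely the computations carried out in \cite{Mo} (cf.\ Proposition 5.6 there and the discussion of wind wheels), so the outline is sound.
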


\begin{remark}
In the preceding theorem, if $\Lambda=k\widetilde{\mathbb{A}}_m$ (for some $m\in \mathbb{Z}_{>0}$), then obviously $\widetilde{\mathbb{A}}_m$ has an acyclic orientation. 
Moreover, the bound quiver $(Q,I)$ is aptly called \emph{barbell} in Ringel's work \cite{Ri}. 
Observe that for any $\tau$-tilting infinite algebra $\Lambda=kQ/I$ in the above theorem, $Q$ always has at least a sink or a source. 
If $C_L= \alpha \cdots \beta$ and $C_R=\gamma \cdots \delta$ denote the cyclic strings illustrated above, they share no vertex and they can be of any positive length.  In particular, $C_L$ and/or $C_R$ can be loops.
Note that, unlike the bound quivers $E(p,q,r)$ in Figure \ref{fig:non-distributive quivers}, the cyclic strings $C_L$ and $C_R$ need not be uniserial. Namely, those substrings of $C_L$ and $C_R$ that are shown by the dashed segments can have any orientation of the arrows. For a more general version of the above theorem, see \cite{Mo}.
\end{remark}

As mentioned before, for an algebra $\Lambda=kQ/I$, by $R$ we denote a minimal set of uniform relations that generates $I$. 
Although $R$ is not necessarily unique, the number of relations in $R$ is an invariant, which we denote by $|R|$. In fact, for each pair of vertices $x, y \in Q_0$, if $I(x,y)$ denotes the relations in $I$ determined by the linear combinations of paths from $x$ to $y$, then $r(x,y):=|R\cap I(x,y)|$ is an invariant of $\Lambda$, because we have $r(x,y)=\dim_k \Ext^2_{\Lambda}(S_x,S_y)$. Now the following result immediately follows from the previous theorem and our results in \cite{Mo}.

\begin{proposition}
Let $\Lambda=kQ/I$ be a minimal representation-infinite biserial algebra. Then, $\Lambda$ is $\tau$-tilting infinite if and only if $R$ consists of at most two relations.
\end{proposition}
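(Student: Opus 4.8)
The plan is to leverage the preceding theorem (Corollary 6.7 of \cite{Mo}), which already lists exactly which $\Lambda \in \Mri(\mathfrak{F}_{\B})$ are $\tau$-tilting infinite, namely $k\widetilde{\mathbb{A}}_m$ and the barbell-type bound quivers with $I = \langle \beta\alpha, \delta\gamma\rangle$. So the whole task reduces to computing $|R|$ in each of these two cases and confirming that $|R| \le 2$ is exactly the dichotomy. First I would handle the forward direction: assume $\Lambda$ is $\tau$-tilting infinite. If $\Lambda = k\widetilde{\mathbb{A}}_m$ then $I = 0$, so $R = \emptyset$ and $|R| = 0 \le 2$. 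If $\Lambda$ is of barbell type, then by the explicit description $I$ is generated by the two quadratic relations $\beta\alpha$ and $\delta\gamma$; I would observe that this generating set is already uniform (each $\beta\alpha$, $\delta\gamma$ is a single path, hence trivially a uniform linear combination) and minimal (neither relation lies in the ideal generated by the other, since they involve disjoint arrows), so $|R| = 2 \le 2$. Using the invariance of $|R|$ noted just before the statement — $r(x,y) = \dim_k \Ext^2_\Lambda(S_x, S_y)$ — this count is independent of the chosen generating set.

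For the converse, I would argue contrapositively: suppose $\Lambda \in \Mri(\mathfrak{F}_{\B})$ is $\tau$-tilting \emph{finite}, and show $|R| \ge 3$. Here I would invoke Ringel's classification \cite{Ri} of min-rep-infinite biserial (equivalently, since $k = \bar{k}$, special biserial) algebras together with the analysis in \cite{Mo}: the $\tau$-tilting finite ones are precisely the min-rep-infinite string algebras that are \emph{not} gentle (by \cite[Theorem 1.4]{Mo}, $\tau$-tilting infiniteness in this family is equivalent to gentleness), plus the wind-wheel algebras and related families appearing in Ringel's list. For each such family I would read off the minimal relation set from the bound quiver and verify $|R| \ge 3$ — for instance, the wind-wheel algebras $E(p,q,r)$ of Figure \ref{fig:non-distributive quivers} already carry three relations $\langle \alpha_1\alpha_p, \gamma_1\gamma_q, \gamma_1\theta_r\cdots\theta_1\alpha_p\rangle$, and the barbell with a third relation (or a non-gentle local configuration) forces an additional quadratic or monomial relation. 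Combining both directions yields the equivalence $\Lambda$ is $\tau$-tilting infinite $\iff |R| \le 2$, which is the statement (with the trivial reformulation "at most two relations").

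The main obstacle I anticipate is the converse direction: it is not enough to cite the barbell theorem, because that theorem only enumerates the $\tau$-tilting \emph{infinite} algebras, whereas here I need a uniform lower bound $|R| \ge 3$ across \emph{all} the remaining (i.e.\ $\tau$-tilting finite) min-rep-infinite biserial algebras. This requires either a direct case analysis over Ringel's full classification of that family, or — cleaner — a structural argument: a min-rep-infinite special biserial algebra that is \emph{not} gentle must fail one of the defining conditions of a gentle algebra precisely by having "too many" relations meeting at some vertex, and one should be able to translate that failure into $\dim_k \Ext^2_\Lambda(S_x, S_y) \ge 1$ for enough pairs, or a single pair with $r(x,y) \ge 2$, combined with the two "structural" relations that every min-rep-infinite special biserial non-$\widetilde{\mathbb{A}}_m$ algebra already possesses. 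I would try to phrase this counting argument intrinsically via the $\Ext^2$ interpretation of $r(x,y)$, which makes the bound independent of presentation and should interface smoothly with the string-combinatorics already developed in \cite{Mo}; if an intrinsic argument proves awkward, I would fall back on the finite case-check against Ringel's list, which is tedious but entirely routine.
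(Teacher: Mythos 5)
Your proposal is correct and follows essentially the same route as the paper, which derives the statement directly from the preceding theorem (the $k\widetilde{\mathbb{A}}_m$/barbell classification of the $\tau$-tilting infinite members, giving $|R|=0$ or $2$) together with the classification of the remaining, $\tau$-tilting finite, min-rep-infinite biserial algebras in \cite{Mo} (wind wheels and node-glued algebras, all with at least three relations). Your worry about the converse is resolved exactly by that finite case-check against Ringel's list, which is what the paper implicitly invokes.
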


Similarly, we have the following characterization for the $\tau$-tilting infinite algebras in $\Mri(\mathfrak{F}_{\nD})$, given in Theorem \ref{Section:Introduction: tau-inf min-rep-inf non-dis}.

\begin{customthm}{1.4}
Let $\Lambda=kQ/I$ be a minimal representation-infinite non-distributive algebra. Then, $\Lambda$ is $\tau$-tilting infinite if and only if $Q$ has a sink or a source.
\end{customthm}
\begin{proof}
The assertion is a direct consequence of Theorem \ref{Thm:Bongartz classification of min-rep-inf non-distributive} and Theorem \ref{tau-inf non-distributive Thm}.
\end{proof}

From the preceding characterizations of $\tau$-tilting infinite algebras in $\Mri(\mathfrak{F}_{\sB})$ and $\Mri(\mathfrak{F}_{\nD})$, now we show Theorem \ref{tau-fin min biserial and non-dist}, which somehow supplements the above statements. 
Recall that for an algebra $\Lambda=kQ/I$, by $\node(\Lambda)$ we denote the set of nodes in $(Q,I)$.

\begin{customthm}{1.5}
Let $\Lambda=kQ/I$ be a minimal representation-infinite algebra which is biserial or non-distributive. Then $\Lambda$ is $\tau$-tilting finite if and only if $\node(\Lambda)\neq \emptyset$ or $|R|=3$.
\end{customthm}
\begin{proof}
Suppose $\Lambda$ is non-distributive. By Theorem \ref{tau-inf non-distributive Thm}, $\Lambda$ is $\tau$-tilting finite if and only if $(Q,I)$ is obtained via gluing the bound quivers in Figure \ref{fig:non-distributive quivers}, or $(Q,I)$ is the quiver of type $E(p,q,r)$. In the former case we have $\node(\Lambda)\neq \emptyset$, while in the latter case we get $|R|=3$. So, we have the desired result.

If $\Lambda$ is biserial, the assertion follows from the classification of $\tau$-tilting finite min-rep-infinite biserial algebras given in \cite[Table 1, Section 7]{Mo}.
\end{proof}

Let us remark that the condition $|R|=3$ in the preceding theorem only takes care of the class of wind wheel algebras, as the only $\tau$-tilting finite class of algebras in $\Mri(\mathfrak{F}_{\sB})\cup \Mri(\mathfrak{F}_{\nD})$ whose bound quivers is node-free.
In other words, the above characterization can be phrased as following.

\begin{corollary}\label{Cor: explicit criterial for tau-inf min-rep-inf}
Suppose $\Lambda=kQ/I$ is a minimal representation-infinite algebra which is biserial or non-distributive. Then it is $\tau$-tilting finite if and only if $(Q,I)$ has a node or a non-quadratic monomial relation.
\end{corollary}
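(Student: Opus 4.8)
The plan is to show that the conjunction ``$\node(\Lambda)\neq\emptyset$ or $|R|=3$'' appearing in Theorem \ref{tau-fin min biserial and non-dist} is, for the algebras under consideration, equivalent to the geometric-combinatorial condition ``$(Q,I)$ has a node or carries a non-quadratic monomial relation.'' Since Theorem \ref{tau-fin min biserial and non-dist} already gives the equivalence of $\tau$-tilting finiteness with the first condition, it suffices to translate that condition into the second one, case by case along the classification (Bongartz's Theorem \ref{Thm:Bongartz classification of min-rep-inf non-distributive} for the non-distributive case, and the corresponding table from \cite{Mo} for the biserial case). First I would dispose of the node clause: it is literally identical in both phrasings, so the only work is to compare ``$|R|=3$'' with ``$(Q,I)$ admits a non-quadratic monomial relation,'' under the standing hypothesis that $\Lambda$ is min-rep-infinite, node-free, biserial or non-distributive.

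For the forward direction, suppose $\Lambda$ is $\tau$-tilting finite with $\node(\Lambda)=\emptyset$; then by Theorem \ref{tau-fin min biserial and non-dist} (and the remark following it) $(Q,I)$ must be of type $E(p,q,r)$ — a wind wheel — in the non-distributive case, and the analogous wind-wheel-type entries in \cite[Table 1]{Mo} in the biserial case. Inspecting the relations $I_E=\langle \alpha_1\alpha_p,\ \gamma_1\gamma_q,\ \gamma_1\theta_r\cdots\theta_1\alpha_p\rangle$ of $E(p,q,r)$ in Figure \ref{fig:non-distributive quivers}, the third generator $\gamma_1\theta_r\cdots\theta_1\alpha_p$ is a monomial path of length $r+2\geq 3$, hence a non-quadratic monomial relation; the same feature appears in the biserial wind wheels. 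Conversely, if $(Q,I)$ has no node and carries a non-quadratic monomial relation, I would argue that among the relevant classes this forces $(Q,I)$ into the wind wheel family: the node-free classes with $\tau$-tilting infinite behaviour (types $A(p,q), B(p,q), C(p), D(p,q)$ and their biserial analogues) have either no relations or only quadratic/binomial relations $I_A=0$, $I_B=\langle\gamma_2\gamma_1+\beta_q\cdots\beta_1+\alpha_p\cdots\alpha_1\rangle$, $I_C=\langle\rho_1\rho_p\rangle$, $I_D=\langle\rho_1\rho_p,\gamma_{q+1}\cdots\gamma_1-\beta\alpha\rangle$ — none of which contains a \emph{monomial} generator of length $\geq 3$ (note $I_B$ and the second generator of $I_D$ are genuinely binomial, not monomial). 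So the presence of a non-quadratic monomial relation rules these out and leaves only the wind wheels, which are $\tau$-tilting finite by Proposition \ref{tau-inf non-dist are schur-rep-inf}'s companion (the wind wheel proposition quoted from \cite{Mo}).

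The one point requiring genuine care — and the main obstacle — is the claim that ``$|R|=3$'' and ``possesses a non-quadratic monomial relation'' really coincide on the nose across the full list, rather than merely being correlated. Concretely, I must check (i) that every node-free, $\tau$-tilting finite member of $\Mri(\mathfrak{F}_{\sB})\cup\Mri(\mathfrak{F}_{\nD})$ has $|R|=3$ and that one of those three relations is a non-quadratic monomial, and (ii) that no node-free $\tau$-tilting \emph{infinite} member accidentally has $|R|=3$ or a non-quadratic monomial relation — both of which follow from the explicit relation sets displayed above, since for types $A$–$D$ we have $|R|\in\{0,1,2\}$ and all relations are quadratic or binomial. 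I would verify (i) and (ii) by a short finite inspection referencing Figure \ref{fig:non-distributive quivers} and \cite[Table 1, Section 7]{Mo}, and then conclude by combining this equivalence with Theorem \ref{tau-fin min biserial and non-dist}. The biserial side is where I would be most careful to cross-check that the wind wheel relations in \cite{Mo} are indeed counted so that $|R|=3$ exactly when the long monomial relation is present; this is bookkeeping rather than mathematics, but it is the step most prone to an off-by-one or a missed edge case (e.g.\ small values of $p,q,r$, or the degenerate loop cases noted in the remark after the biserial classification theorem).
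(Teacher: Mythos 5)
Your proposal is correct and follows essentially the same route as the paper: the paper derives the corollary as a direct rephrasing of Theorem \ref{tau-fin min biserial and non-dist}, remarking that the condition $|R|=3$ occurs only for the node-free wind wheel class (whose defining ideal contains the long monomial relation, e.g.\ $\gamma_1\theta_r\cdots\theta_1\alpha_p$ in $E(p,q,r)$), while the node clause is unchanged. Your explicit case-by-case inspection of the relation sets in Figure \ref{fig:non-distributive quivers} and the biserial classification is exactly the finite verification implicit in that remark.
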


We finish this section by the following consequence of our results, combined with the classification of Happel and Vossieck in \cite{HV}. 
In the aforementioned work the authors study those rep-infinite algebras $\Lambda$ for which $\Lambda/\langle e\rangle$ is rep-finite, for every primitive idempotent $e \in \Lambda$. We call such algebras \emph{weakly minimal representation-infinite}. Observe that every min-rep-infinite algebra is evidently weakly min-rep-infinite, but the converse is not necessarily true. In \cite{HV}, the authors determined which weakly min-rep-infinite algebras admit a preprojective components. Comparing their list with the algebras treated in this note, we get the following result.

\begin{theorem}
Let $\Lambda=kQ/I$ be a minimal representation-infinite algebra which is biserial or non-distributive. Then, $\Lambda$ admits a preprojective component if and only if $\Lambda$ is hereditary, or $(Q,I)=B(p,q)$ for an admissible choice of $p$ and $q$ in Figure \ref{fig:non-distributive quivers}.
\end{theorem}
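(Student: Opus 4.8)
The plan is to combine the classification of min-rep-infinite biserial and non-distributive algebras with the Happel--Vossieck list \cite{HV} of weakly min-rep-infinite algebras admitting a preprojective component. First I would note that every min-rep-infinite algebra is in particular weakly min-rep-infinite, so each $\Lambda$ under consideration appears on the Happel--Vossieck list precisely when it has a preprojective component. Thus the task reduces to inspecting which of the concrete bound quivers from Theorem \ref{Section:Introduction: tau-inf min-rep-inf bis} (the $\widetilde{\mathbb{A}}_m$ case and the barbell case) and from Figure \ref{fig:non-distributive quivers} (types $A(p,q)$, $B(p,q)$, $C(p)$, $D(p,q)$, $E(p,q,r)$, together with the algebras obtained by gluing) actually occur in \cite{HV}.

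Next I would dispose of the easy directions. If $\Lambda$ is hereditary, it is of type $A(p,q)$ (equivalently $k\widetilde{\mathbb{A}}_m$ with an acyclic orientation), and it is classical \cite[VIII]{ASS} that a rep-infinite hereditary algebra has a preprojective component; so the ``if'' direction holds in that case. For the case $(Q,I)=B(p,q)$, I would point to the remark following the classification of weakly min-rep-infinite algebras: the algebras $B(p,q)$ are exactly the tame-concealed-like tilted algebras that Happel and Vossieck single out as the non-hereditary weakly min-rep-infinite algebras with a preprojective component, and one can verify directly that the one-point extension structure of $B(p,q)$ produces a preprojective component. For the remaining types I would argue the converse: an algebra with a loop in its Garland of relations, or with a node after gluing, cannot have a preprojective component. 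Concretely, $C(p)$, $D(p,q)$, and $E(p,q,r)$ all contain an oriented cycle through which every indecomposable of large dimension must pass (the cycle $\rho_1\cdots\rho_p$, respectively the band in the wind-wheel), which forces the existence of $\tau$-periodic or regular-like behaviour incompatible with a preprojective component; and any glued algebra has a node $x$, and the simple $S_x$ is both a top and socle composition factor of the relevant modules, again obstructing a preprojective component. Cross-referencing the explicit tables of \cite{HV} makes each of these exclusions rigorous without needing the structural argument.

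The main obstacle I anticipate is matching conventions: the Happel--Vossieck classification is phrased in terms of tilted algebras of tame type and their frames, whereas our algebras are given by explicit bound quivers, so the bulk of the work is a careful bookkeeping translation to confirm that $B(p,q)$ (for every admissible $(p,q)$, including the borderline cases $q=3$, $p\le 5$) genuinely appears on their list and that $C(p)$, $D(p,q)$, $E(p,q,r)$ and all gluings genuinely do not. A secondary subtlety is that ``admissible'' for $B(p,q)$ allows $q=2$ with $p$ arbitrary, so I would check that the preprojective component persists uniformly in $p$; since $B(p,q)$ is a one-point (co)extension of a Nakayama-type algebra by a module whose dimension vector does not depend critically on $p$ in a way that destroys the component, this should go through, but it is the place where I would be most careful. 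Once the dictionary is fixed, the theorem follows by direct comparison of finite lists.
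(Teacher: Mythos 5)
Your proposal follows essentially the same route as the paper: observe that every minimal representation-infinite algebra is weakly minimal representation-infinite, and then decide the question by comparing the classified bound quivers (acyclic $\widetilde{\mathbb{A}}_m$, barbells, and the types of Figure \ref{fig:non-distributive quivers} with their gluings) against the Happel--Vossieck list of such algebras admitting a preprojective component. This is exactly the paper's (equally brief) argument; the only point to tighten is that the biserial comparison should run over Ringel's full classification of min-rep-infinite biserial algebras rather than only the $\tau$-tilting infinite ones of Theorem \ref{Section:Introduction: tau-inf min-rep-inf bis}, or else be justified by noting that a rep-infinite algebra with a preprojective component is automatically $\tau$-tilting infinite, so the $\tau$-tilting finite cases are excluded for free.
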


In other words, the previous theorem states that among all the $\tau$-tilting infinite algebras in $\Mri(\mathfrak{F}_{\sB}) \cup \Mri(\mathfrak{F}_{\nD})$, only those which are hereditary or canonical will admit a preprojective component. Observe that one can analogously define the notion of ``weakly minimal $\tau$-tilting infinite" algebra in the sense of the definition given above. Then, by results of \cite{Mo}, we easily conclude that the family of all such algebras strictly contains  all the algebras classified by Happel and Vossieck \cite{HV}.

Before we end this section, we wish to draw the attention of the reader to another interesting connection between the two families of algebras $\Mri(\mathfrak{F}_{\sB})$ and $\Mri(\mathfrak{F}_{\nD})$. In particular, using Bongart'z classification and relatively simple techniques from degeneration of algebras, one can show that every algebra in $\Mri(\mathfrak{F}_{\nD})$ degenerates to a special biserial algebra. Hence, $\Mri(\mathfrak{F}_{\sB})\cup \Mri(\mathfrak{F}_{\nD})$ consists of only tame algebras and every wild min-rep-infinite algebra belongs to $\Mri(\mathfrak{F}_{\gC})$.

\section{$\tau$-tilting theory and geometry of module varieties}

As mentioned in the introduction, in \cite{CKW}, the authors studied the representation theory of finite dimensional algebras from the geometric perspectives. In particular, for an algebra $\Lambda=kQ/I$ and a fixed dimension vector $ \underline{d} \in \mathbb{Z}^{Q_0}$, they looked at several interesting properties of the module varieties $\modu(\Lambda, \underline{d})$ and explored the connections between them. In the following we mainly consider two of the notions introduced there and postpone a more extensive study of the subject to our future investigations. 
In particular, we use our results on the $\tau$-tilting theory of min-rep-infinite algebras to propose a long-term project that investigates the potential connections between $\tau$-tilting theory and some geometric aspects of representation theory. Any progress in this direction fortifies the linkages between categorical and geometric aspects of the subject and opens new horizons.

Let us begin this section by recalling some definitions and result from \cite{CKW}. For the sake of brevity, we do not aim to present a self-contained section. Hence, for the most part, we refer to the aforementioned paper and the references there. As before, $k$ is supposed to be an algebraically closed field. Moreover, for the statements that we recall from \cite{CKW}, as it is assumed by the authors, we suppose $k$ is of characteristic zero.

\subsection{Dense orbit property}
As defined earlier, an algebra $\Lambda$ has the \emph{dense orbit property} (or $\Lambda$ is DO, for short) if for every dimension vector $\underline{d}$, and each irreducible component $\mathcal{Z}$ in $\modu (\Lambda, \underline{d})$, there exists a dense orbit in $\mathcal{Z}$. 

An algebra $\Lambda$ is obviously DO if every irreducible component $\mathcal{Z}$ in $\modu (\Lambda, \underline{d})$ contains a rigid $\Lambda$-module. However, the converse is not necessarily true.
It is straight forward to check that every rep-finite algebra is DO. To show that the DO property is novel, in \cite{CKW} the authors construct explicit examples of rep-infinite algebras which are DO. In fact, all of the examples given there are two-point algebras and have infinite global dimension.
Via an explicit checking, which we leave to the reader, one can verify that all the DO algebras given in \cite{CKW} admit only finitely many isomorphism classes of bricks, thus they are $\tau$-tilting finite. This could be alternatively verified by the new results of Wang \cite{Wa} on two-point algebras.

The conjectures we state in the following are primarily motivated by the above observation and our results on the family of min-rep-infinite algebras. In particular, the following result holds for the family of string algebras, thus it closely relates to our work in \cite{Mo}.

\begin{proposition}[{\cite[Proposition 4.4]{CKW}}]\label{DO String}
A string algebra is DO if and only if it is representation-finite.
\end{proposition}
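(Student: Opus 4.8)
The plan is to prove the nontrivial direction: if $\Lambda$ is a rep-infinite string algebra, then $\Lambda$ is not DO. (The converse, that rep-finite algebras are DO, was already noted above.) The strategy is to produce, for infinitely many dimension vectors $\underline d$, an irreducible component $\mathcal{Z}$ of $\modu(\Lambda,\underline d)$ in which no orbit is dense. First I would use the fact that a rep-infinite string algebra has a \emph{band}: by the classification of indecomposables over string algebras, rep-infiniteness forces the existence of a cyclic string $b$ which is not a power of a shorter string, giving rise to the one-parameter family of band modules $M(b,\lambda,1)$ for $\lambda \in k^{*}$, all of the same dimension vector $\underline d_b$. Each $M(b,\lambda,1)$ is a brick (its endomorphism ring is $k$), and for $\lambda\neq\mu$ the modules $M(b,\lambda,1)$ and $M(b,\mu,1)$ are non-isomorphic.

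Next I would identify an irreducible component $\mathcal{Z}$ of $\modu(\Lambda,\underline d_b)$ containing the closure of the locus $\{M(b,\lambda,1):\lambda\in k^{*}\}$. Since this locus is one-dimensional and each point of it has a distinct $\GL(\underline d_b)$-orbit, no single orbit can be dense in its closure, hence not in any irreducible component containing it: a dense orbit would have to contain a Zariski-open subset, but the complement of a single orbit in this locus is still infinite and is not contained in any proper closed subset missing the generic point. More carefully, the orbit of $M(b,\lambda,1)$ has dimension $\dim\GL(\underline d_b) - \dim\End_\Lambda(M(b,\lambda,1)) = \dim\GL(\underline d_b) - 1$, and the band locus adds one more parameter transverse to the orbit, so the component $\mathcal{Z}$ has dimension at least $\dim\GL(\underml d_b)$; then the maximal orbit dimension is strictly smaller than $\dim\mathcal{Z}$, so no orbit is dense. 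This shows $\Lambda$ is not DO.

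The main obstacle I anticipate is the bookkeeping needed to guarantee that the band locus is genuinely a transverse family inside a single irreducible component, i.e.\ that one really gains a parameter beyond the orbit direction rather than the band modules all lying in the closure of a single larger orbit. This is handled by the brick property: since $\End_\Lambda(M(b,\lambda,1))\simeq k$, the orbit of each band module is as large as possible, of codimension exactly $1$ in $\GL(\underline d_b)$, so the extra parameter $\lambda$ cannot be absorbed into the orbit; together with upper semicontinuity of $\lambda\mapsto\dim\End_\Lambda(M(b,\lambda,1))$ and irreducibility of $k^{*}$, the whole family lands in one component. A clean alternative, if one wishes to avoid any dimension count, is to invoke directly the characterization in \cite{CKW} that $\Lambda$ is DO only if every indecomposable lies in an orbit that is open in its component, which fails for band modules precisely because they come in one-parameter families of non-isomorphic bricks; this reduces the proposition to the structural dichotomy "string algebra is rep-finite or has a band," which is classical.
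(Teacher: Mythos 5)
The key step of your argument—``each $M(b,\lambda,1)$ is a brick''—is false in general, and the whole dimension count hinges on it. Take the local string algebra $\Lambda=k\langle x,y\rangle/(x^2,y^2,xy,yx)$: it is representation-infinite, its only band is $b=xy^{-1}$, and the band modules $M(b,\lambda,1)$ are the two-dimensional modules with $x,y$ acting by proportional rank-one square-zero matrices; each of these has a nonzero nilpotent endomorphism (the socle-to-top map), so $\dim_k\End_\Lambda(M(b,\lambda,1))=2$ and there are no bricks at all in this dimension. Once $\dim_k\End_\Lambda(M_\lambda)=e\geq 2$, the band locus has dimension $\dim\GL(\underline{d}_b)-e+1\leq\dim\GL(\underline{d}_b)-1$, which no longer dominates the maximal possible orbit dimension, so you cannot conclude that the component containing the band modules has no dense orbit without substantially more information about that component (e.g.\ that the band locus is generic in it). There is also a structural reason a fix cannot stay within your strategy: if every representation-infinite string algebra had a one-parameter family of same-dimension bricks, it would be brick-infinite, hence $\tau$-tilting infinite; but the wind wheel algebras discussed in this paper (and the local example above) are representation-infinite string algebras that are $\tau$-tilting finite. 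So producing infinitely many bricks of one dimension vector is simply not available in general, even though the conclusion (failure of DO) is still true for them.

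For comparison: the paper gives no proof of this statement—it is quoted from \cite{CKW}—and the argument there does not go through bricks; it uses the description of the irreducible components of module varieties of special biserial/string algebras, in which the component containing a band module has generic modules lying in genuine one-parameter families (direct sums of band modules), so no orbit can be dense. Your ``clean alternative'' also misstates the DO property: DO asks that each irreducible component contain a dense orbit, not that every indecomposable have an orbit open in its component, and it again leans on the brick claim. Your argument does correctly handle the easy direction and the reduction to the existence of a band, and it would be valid verbatim for those bands whose quasi-simple band modules are bricks; but for arbitrary rep-infinite string algebras the brick hypothesis fails and the proof has a genuine gap.
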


Because all the min-rep-inf biserial algebras are string algebras, from the preceding proposition it immediately follows that if $\Lambda \in \Mri(\mathfrak{F}_{\B})$, then $\Lambda$ is never DO. We show that a similar fact holds for the family $\Mri(\mathfrak{F}_{\nD})$, meaning that if $\Lambda$ is min-rep-infinite non-distributive, then $\Lambda$ is never DO.
To prove this, we first need to recall another geometric notion introduced in \cite{CKW}.

\subsection{Schur-representation-finite}
As defined in the introduction, an algebras $\Lambda$ is \emph{Schur-representation-finite} provided that for each dimension vector $\underline{d}$, there are only finitely many orbits of Schur representations (i.e, bricks) in $\modu(\Lambda, \underline{d})$.

The following statement shows how these two new geometric notions are related. 

\begin{lemma}[{\cite[Lemma 3.2]{CKW}}]\label{DO is Schur-rep-fin}
If $\Lambda$ is DO, then it is Schur-representation-finite.
\end{lemma}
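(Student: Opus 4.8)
The plan is to combine the standard dimension count for $\GL(\underline{d})$-orbits in module varieties with the observation that a brick has an orbit of the maximal possible dimension. Fix a dimension vector $\underline{d}$; the goal is to show that $\modu(\Lambda,\underline{d})$ contains only finitely many orbits of bricks. Recall that for $M\in\modu(\Lambda,\underline{d})$ the stabilizer of $M$ under $\GL(\underline{d})$ is the unit group of $\End_\Lambda(M)$, a dense open subset of the affine space $\End_\Lambda(M)$; hence the orbit $\mathcal{O}_M$ is a smooth locally closed subvariety with $\dim\mathcal{O}_M=\dim\GL(\underline{d})-\dim_k\End_\Lambda(M)$. Since $\End_\Lambda(M)\supseteq k\cdot\mathrm{id}_M$, every orbit in $\modu(\Lambda,\underline{d})$ has dimension at most $\dim\GL(\underline{d})-1$, with equality precisely when $M$ is a brick.

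Next I would argue that every brick orbit is automatically a dense orbit of the component containing it. Let $M$ be a brick of dimension vector $\underline{d}$ and let $\mathcal{Z}$ be an irreducible component of $\modu(\Lambda,\underline{d})$ through the point $M$. By the DO hypothesis $\mathcal{Z}$ contains a dense $\GL(\underline{d})$-orbit $\mathcal{O}$; since an orbit is open in its closure (its boundary being a union of strictly smaller orbits, hence closed), $\mathcal{O}$ is open in $\mathcal{Z}$, so $\dim\mathcal{Z}=\dim\mathcal{O}\leq\dim\GL(\underline{d})-1$. On the other hand $\mathcal{O}_M\subseteq\mathcal{Z}$ forces $\dim\GL(\underline{d})-1=\dim\mathcal{O}_M\leq\dim\mathcal{Z}$. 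Hence $\dim\mathcal{O}_M=\dim\mathcal{Z}$, and as $\overline{\mathcal{O}_M}$ is an irreducible closed subset of the irreducible variety $\mathcal{Z}$ of the same dimension, $\overline{\mathcal{O}_M}=\mathcal{Z}$; that is, $\mathcal{O}_M$ is the dense orbit of $\mathcal{Z}$.

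To conclude: a dense orbit of an irreducible variety is unique (two would be dense opens, hence would meet, hence coincide), and $\modu(\Lambda,\underline{d})$ has only finitely many irreducible components. Therefore assigning to each brick of dimension vector $\underline{d}$ a component containing it induces a well-defined injection from the set of brick-orbits into the finite set of irreducible components of $\modu(\Lambda,\underline{d})$, so there are only finitely many brick-orbits. As $\underline{d}$ was arbitrary, $\Lambda$ is Schur-representation-finite.

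The step that needs the most care — though it is elementary rather than deep — is the second paragraph: one must verify that a brick orbit lying inside a component $\mathcal{Z}$ actually coincides with the a priori possibly different dense orbit guaranteed by the DO property. This is exactly where the maximality of the brick orbit dimension, $\dim\GL(\underline{d})-1$, among all orbit dimensions in $\modu(\Lambda,\underline{d})$ enters. Everything else is bookkeeping with orbit dimensions and standard facts about algebraic group actions on irreducible varieties (Chevalley's theorem on constructible images, the orbit-closure lemma, and finiteness of the number of irreducible components of an affine variety).
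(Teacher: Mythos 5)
Your argument is correct. Note that the paper does not prove this lemma at all --- it is imported verbatim as Lemma 3.2 of \cite{CKW} --- so there is no in-paper proof to compare against; what you have written is essentially the standard dimension-count argument behind the cited result: orbit dimension equals $\dim\GL(\underline{d})-\dim_k\End_\Lambda(M)$, bricks attain the maximal value $\dim\GL(\underline{d})-1$, hence a brick orbit inside a component with a dense orbit must itself be that (unique) dense orbit, and finiteness of the set of irreducible components of $\modu(\Lambda,\underline{d})$ finishes the proof. The only step you pass over lightly is the inclusion $\mathcal{O}_M\subseteq\mathcal{Z}$ for a component $\mathcal{Z}$ through $M$: this is fine because $\GL(\underline{d})$ is connected, so each irreducible component is stable under the action (equivalently, one may choose $\mathcal{Z}$ to contain the irreducible set $\overline{\mathcal{O}_M}$), but it deserves a sentence rather than being implicit.
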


Before we focus on the family of min-rep-infinite non-distributive algebras, let us state a relevant result from \cite{CKW}. 
For the convenience of the reader, we also present a sketch of the proof.
Recall that $\Lambda=kQ/I$ is called \emph{triangular} if $Q$ has no oriented cycles.

\begin{theorem}[{\cite[lemma 4.3]{CKW}}]\label{CKW Thm on triangualr non-dist}
Suppose $\Lambda$ is a non-distributive triangular algebra. Then $\Lambda$ admits infinitely many Schur representations of the same dimension. In particular, $\Lambda$ is not Schur-representation-finite, thus it is not DO.
\end{theorem}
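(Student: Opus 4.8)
The plan is to produce, for a non-distributive triangular algebra $\Lambda = kQ/I$, an explicit one-parameter family of pairwise non-isomorphic bricks all having the same dimension vector, and then invoke Lemma \ref{DO is Schur-rep-fin} to conclude that $\Lambda$ cannot be DO. The natural engine for this is Bongartz's trick, as packaged in Proposition \ref{Bongartz's trick for a family of modules}: if we can locate a pair of primitive idempotents $e, f$ with $\dim_k(\mathfrak{R}^l/\mathfrak{R}^{l+1}) > 1$ in the radical filtration of $f\Lambda e$, and moreover $e\Lambda e \simeq k$, then $M_\lambda := \Lambda e/\langle u - \lambda v\rangle$ is the desired family. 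Since $\Lambda$ is non-distributive, Proposition \ref{distributive algebras} guarantees that \emph{some} pair $(e,f)$ with $\dim_k(\mathfrak{R}^l/\mathfrak{R}^{l+1}) > 1$ exists; the work is to arrange, after passing to a suitable quotient, that the source idempotent $e$ also satisfies $e\Lambda e \simeq k$.

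The key steps, in order, would be: (1) fix a pair of vertices $x, y \in Q_0$ for which the bimodule $e_y \Lambda e_x$ fails to be uniserial, so that in its radical filtration there is a smallest layer $\mathfrak{R}^l/\mathfrak{R}^{l+1}$ of dimension $>1$, with linearly independent classes represented by $u, v \in \mathfrak{R}^l$; (2) use triangularity to replace $x$ by a source vertex: since $Q$ has no oriented cycles, following arrows backward from $x$ reaches a source $a$, and — after precomposing $u, v$ with an appropriate path $a \to x$ if necessary — one still obtains two $\Lambda$-independent elements $u', v' \in e_y \Lambda e_a$ (here one must check non-distributivity is inherited, which it is: a path composed with a non-uniserial situation remains non-uniserial, or more carefully, one chooses $a$ so that the relevant bimodule over $e_a \Lambda e_a$ stays non-distributive); (3) since $a$ is a source in a triangular quiver, $e_a \Lambda e_a \simeq k$ automatically, because the only path from $a$ to $a$ is the trivial one; (4) apply Proposition \ref{Bongartz's trick for a family of modules} with $e = e_a$ to get the one-parameter family $M_\lambda = \Lambda e_a/\langle u' - \lambda v'\rangle$ of bricks; (5) observe that all $M_\lambda$ share the same dimension vector (they are quotients of the fixed module $\Lambda e_a$ by a one-dimensional subspace of the same "type"), and are pairwise non-isomorphic by Bongartz's argument; (6) conclude via Lemma \ref{DO is Schur-rep-fin} that $\Lambda$ is not DO, and directly that it is not Schur-representation-finite.

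The main obstacle I anticipate is step (2)–(3): ensuring that when we move from the vertex $x$ furnished by non-distributivity to a genuine source $a$, the two elements $u', v'$ remain $\Lambda$-independent (equivalently, that the layer $\mathfrak{R}^l/\mathfrak{R}^{l+1}$ of $e_y \Lambda e_a$ still has dimension $>1$) \emph{and} that $e_a \Lambda e_a \simeq k$ simultaneously. Triangularity is exactly what buys the second condition for free, which is why the hypothesis is there; for the first, one argues that left-multiplication by a fixed path $p: a \to x$ sends a $2$-dimensional layer to a $2$-dimensional (or larger) layer provided $p$ itself is not a relation, and one can always choose such a $p$ by walking back along arrows that do not lie in $I$. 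A secondary, more cosmetic point is that Proposition \ref{Bongartz's trick for a family of modules} as stated assumes one has already normalized so that $\Lambda e/\langle u - \lambda v\rangle$ is a one-parameter family of non-isomorphic indecomposables; here we should cite the remark following that proposition, which says one may assume this from the outset, and note that the "same dimension" claim is then immediate since the submodule being quotiented out has the same graded structure for every $\lambda$.
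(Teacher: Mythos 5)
There is a gap, and it sits exactly where you flagged it: steps (2)--(3). The transport of $u,v$ from the vertex $x$ furnished by non-distributivity to a source $a$ is not justified. Precomposing with a path $p\colon a\to x$ need not preserve $\Lambda$-independence: even if $p\notin I$, the composites $up$ and $vp$ can vanish or become proportional, because the relations in $I$ may involve precisely those longer paths (the layer $\mathfrak{R}^l/\mathfrak{R}^{l+1}$ of $e_y\Lambda e_x$ having dimension $>1$ says nothing about $e_y\Lambda e_a$). Your suggested remedy, ``choose $p$ not lying in $I$,'' does not close this, and no choice of $a$ is guaranteed to keep a non-uniserial bimodule in sight.

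The gap is, however, easily repaired, and the repair is what the paper's own proof does: the detour to a source is unnecessary. In a triangular algebra there is no oriented cycle through \emph{any} vertex, so $e_x\Lambda e_x\simeq k$ (and $e_y\Lambda e_y\simeq k$) already holds for the pair of vertices produced by non-distributivity via Proposition \ref{distributive algebras}; indeed $e_y\Lambda e_x$ is then just a vector space as an $(e_y\Lambda e_y\text{-}e_x\Lambda e_x)$-bimodule. Hence Proposition \ref{Bongartz's trick for a family of modules} applies directly with $e=e_x$, yielding the one-parameter family $M_\lambda=\Lambda e_x/\langle u-\lambda v\rangle$ of pairwise non-isomorphic bricks of the same dimension, and Lemma \ref{DO is Schur-rep-fin} finishes the argument as you say. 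In short: you located the right hypothesis (triangularity) but deployed it to manufacture a source, when its actual role is to make the local algebra at the \emph{given} vertex trivial; remove steps (2)--(3) and the rest of your outline coincides with the paper's proof.
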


\begin{proof}
Since $\Lambda$ is non-distributive, there exist primitive idempotents $e,f$ in $\Lambda$ and a smallest $l\in \mathbb{Z}_{>0}$ such that
$\dim_k(\mathfrak{R}^l/\mathfrak{R}^{l+1})>1$, where $\mathfrak{R}$ denotes the radical of $f\Lambda e$ as $(f\Lambda f-e\Lambda e)$-bimodule.
Without loss of generality, by Proposition \ref{Bongartz's trick for a family of modules} and the paragraph preceding that,
we can assume $\mathfrak{R}^{l+1}=0$.
Moreover, because $\Lambda$ is triangular, $e \neq f$ and we have $f\Lambda f\simeq k \simeq e\Lambda e$, thus $f \Lambda e$, viewed as a $(f\Lambda f-e\Lambda e)$-bimodule, is in fact a vector space. Thus, its radical is in fact zero. 

Therefore, as in Proposition \ref{Bongartz's trick for a family of modules}, we can find $u$ and $v$ in $\Lambda e$ such that for every $\lambda \in k$, the $\Lambda$-module $M_{\lambda}:= \Lambda e /\langle u-\lambda v \rangle$ is a brick.

\end{proof}

The proof of the preceding theorem is based on the insightful construction of Bongartz \cite[Section 1]{Bo1}, which also allows us to show the following result.

\begin{theorem}\label{my conj verified for min-rep non-dist}
Let $\Lambda$ be a minimal representation-infinite non-distributive algebra. Then $\Lambda$ is $\tau$-tilting finite if and only if it is Schur-representation-finite. In particular, if $\Lambda$ is $\tau$-tilting infinite, then it is not DO.
\end{theorem}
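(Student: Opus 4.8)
The statement has two directions, and the plan is to prove them using the explicit classification of $\Mri(\mathfrak{F}_{\nD})$ established in Theorem~\ref{tau-inf non-distributive Thm} together with the Bongartz-trick of Proposition~\ref{Bongartz's trick for a family of modules}. First, the easy implication: if $\Lambda$ is $\tau$-tilting finite, then by the brick-$\tau$-rigid correspondence of \cite{DIJ} there are only finitely many isomorphism classes of bricks in $\modu\Lambda$; since a Schur representation is exactly a brick, there are only finitely many orbits of Schur representations in each $\modu(\Lambda,\underline{d})$, so $\Lambda$ is Schur-representation-finite. (This direction does not even use minimality or non-distributivity.)

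For the converse, I would argue the contrapositive: assume $\Lambda$ is $\tau$-tilting infinite and show it is \emph{not} Schur-representation-finite, by exhibiting an infinite family of pairwise non-isomorphic bricks all of the same dimension. By Theorem~\ref{tau-inf non-distributive Thm}, a $\tau$-tilting infinite $\Lambda$ in $\Mri(\mathfrak{F}_{\nD})$ has bound quiver of type $A(p,q)$, $B(p,q)$, $C(p)$, or $D(p,q)$ in Figure~\ref{fig:non-distributive quivers}. For types $B$, $C$, $D$ the work is already done: in the proof of Proposition~\ref{tau-inf non-dist are schur-rep-inf} we produced, via Proposition~\ref{Bongartz's trick for a family of modules}, a one-parameter family $M_\lambda = \Lambda e_a/\langle u-\lambda v\rangle$ of pairwise non-isomorphic bricks, and by construction all the $M_\lambda$ share the same dimension vector (the quotient $\langle u-\lambda v\rangle$ has the same dimension for every $\lambda$, since $u,v$ lie in the same homogeneous layer $\mathfrak{R}^l$ and are $\Lambda$-independent). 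Hence for types $B$, $C$, $D$ we have infinitely many Schur representations in a single $\modu(\Lambda,\underline{d})$, so $\Lambda$ is not Schur-rep-finite. For type $A(p,q)$, the algebra is hereditary with a quiver that is a (non-orientation-$\widetilde{\mathbb{A}}$) tree-like bipartite quiver with unique source $a$ and unique sink $z$; here too one can write down directly the one-parameter family of representations supported on the two ``arms'' with the source-vertex vector space glued by a scalar $\lambda$ — these are the classical non-homogeneous-tube or regular bricks — and check they are bricks of constant dimension. Alternatively, since $A(p,q)$ is rep-infinite hereditary it is tame or wild; in either case the Auslander–Reiten quiver contains a tube or enough regular modules to supply infinitely many bricks of the same length, which again shows failure of Schur-rep-finiteness.

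The final sentence, ``if $\Lambda$ is $\tau$-tilting infinite, then it is not DO,'' is then immediate: we have just shown that $\tau$-tilting infinite forces $\Lambda$ not Schur-representation-finite, and by Lemma~\ref{DO is Schur-rep-fin} being DO implies being Schur-representation-finite; contrapositively, not Schur-rep-finite implies not DO.

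\textbf{Main obstacle.} The only genuinely non-routine point is the type $A(p,q)$ case, which is not covered by Proposition~\ref{tau-inf non-dist are schur-rep-inf} (that proposition is stated for $B$, $C$, $D$ only, since $A$ has $e_a\Lambda e_a\simeq k$ but the relevant bimodule structure degenerates). I expect to handle it either by an explicit elementary construction of a $\mathbb{P}^1$-family of bricks — writing the representation with $k^2$ or $k$ at the interior vertices and a generic scalar $\lambda$ encoding the gluing at the source, and verifying $\End\simeq k$ by a direct computation — or by invoking the standard fact (as already used for $A(p,q)$ in Section~\ref{Section:tau-tilting infinite minimal representation-infinite algebras} via \cite[VIII.2.7]{ASS}) that a rep-infinite hereditary algebra has a preprojective component and hence is $\tau$-tilting infinite, combined with the classical description of its regular components, which contain infinitely many bricks of bounded (indeed constant, within a tube) dimension. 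The bookkeeping to see that the families produced in each case consist of modules of \emph{the same} dimension vector, rather than merely infinitely many dimension vectors, is the detail that needs care but presents no real difficulty.
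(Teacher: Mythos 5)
Your proposal is correct and follows essentially the same route as the paper: the easy direction via the brick--$\tau$-rigid correspondence of \cite{DIJ}, the contrapositive via Theorem \ref{tau-inf non-distributive Thm} using the Bongartz-type one-parameter families $M_\lambda$ of bricks of constant length for $B(p,q)$, $C(p)$, $D(p,q)$ (Proposition \ref{tau-inf non-dist are schur-rep-inf}), a one-parameter family of band/regular bricks of fixed length at the tube mouths for $A(p,q)$, and Lemma \ref{DO is Schur-rep-fin} for the DO statement. The only slip is descriptive: $A(p,q)$ is an acyclic orientation of $\widetilde{\mathbb{A}}_{p+q-1}$ (so tame, not possibly wild, and not tree-shaped), which is precisely why the band-module argument the paper invokes applies.
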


\begin{proof}
First we note that by the $\tau$-rigid-brick correspondence in \cite{DIJ}, every $\tau$-tilting finite algebra is Schur-representation-finite. Hence, to prove the first assertion, we only show that for a min-rep-infinite non-distributive algebra, if $\Lambda$ is Schur-rep-finite, then it is $\tau$-tilting finite.

Suppose $\Lambda=kQ/I$. By Theorem \ref{Thm:Bongartz classification of min-rep-inf non-distributive}, $(Q,I)$ is one of the bound quivers in Figure \ref{fig:non-distributive quivers} or their glued version. Moreover, from Theorem \ref{tau-inf non-distributive Thm}, $\Lambda$ is $\tau$-tilting infinite if and only if $(Q,I)$ is of type $A(p,q)$, $B(p,q)$, $C(p)$ or $D(p,q)$ in Figure \ref{fig:non-distributive quivers}, with an admissible choice of $p$ and $q$ in $\mathbb{Z}_{>0}$.

For any acyclic quiver $\widetilde{\mathbb{A}}_{m}$, there exists a one-parameter family of band modules of the same length, sitting on the mouths of the band tubes. This shows the desired result for type $A(p,q)$. As for the other cases, note that in Proposition \ref{tau-inf non-dist are schur-rep-inf} we also gave an explicit one-parameter family of bricks of the same length for the bound quivers $B(p,q)$, $C(p)$ or $D(p,q)$.
This proves the first assertion. The second assertion immediately follows from Lemma \ref{DO is Schur-rep-fin}.

\end{proof}

Note that, unlike Theorem \ref{CKW Thm on triangualr non-dist}, in the family of min-rep-inf non-distributive algebras, which are concerned in the above theorem, there are families of algebras which are not triangular (see Figure \ref{fig:non-distributive quivers}).

The following result, which treats the family of min-rep-infinite biserial algebras, is analogous to the previous theorem.
Here we use our results in \cite{Mo}, which give explicit classification of $\tau$-tilting infinitene algebras in $\Mri(\mathfrak{F}_{\sB})$ to prove the assertion.

\begin{theorem}\label{my conj verified for min-rep biserial}
Let $\Lambda$ be a minimal representation-infinite biserial algebra. Then $\Lambda$ is $\tau$-tiling finite if and only if it is Schur-representation-finite. In particular, if $\Lambda$ is $\tau$-tilting infinite, then it is not DO.
\end{theorem}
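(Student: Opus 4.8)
The plan is to follow exactly the same two-step strategy used for Theorem \ref{my conj verified for min-rep non-dist}, adapting the input data to the biserial case. The ``only if'' direction is free: by the brick--$\tau$-rigid correspondence of \cite{DIJ}, any $\tau$-tilting finite algebra has only finitely many bricks in $\modu\Lambda$, hence only finitely many Schur representations, hence is Schur-representation-finite. So the content is the ``if'' direction, which I would prove by contraposition: assume $\Lambda\in\Mri(\mathfrak{F}_{\B})$ is $\tau$-tilting infinite and produce, for some fixed dimension vector, infinitely many pairwise non-isomorphic bricks of that dimension; this exhibits $\Lambda$ as not Schur-representation-finite, and the final sentence (``not DO'') then follows immediately from Lemma \ref{DO is Schur-rep-fin}.

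For the core construction I would invoke the classification of $\tau$-tilting infinite algebras in $\Mri(\mathfrak{F}_{\B})$ recalled just above (Theorem \cite[Corollary 6.7]{Mo}): such a $\Lambda$ is either $k\widetilde{\mathbb{A}}_m$ with an acyclic orientation, or a barbell algebra $(Q,I)$ with $I=\langle\beta\alpha,\delta\gamma\rangle$ as pictured, with two cyclic strings $C_L$, $C_R$ joined by a bar $\mathfrak{b}$. In the first case, $k\widetilde{\mathbb{A}}_m$ with a non-cyclic (acyclic) orientation is a tame hereditary algebra, so its Auslander--Reiten quiver contains tubes, and on the mouth of the homogeneous tube sits a one-parameter family of band modules $\{M_\lambda\}_{\lambda\in k}$, all of the same dimension vector and pairwise non-isomorphic; each $M_\lambda$ is a brick since the modules on the mouth of a homogeneous tube are bricks. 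In the barbell case I would apply Proposition \ref{Bongartz's trick for a family of modules}: there is a source or a sink in $Q$ (as noted in the Remark following \cite[Corollary 6.7]{Mo}), and at that vertex $a$ one has $e_a\Lambda e_a\simeq k$ together with the two ``competing'' cyclic strings producing elements $u,v$ in $\Lambda e_a$ with $\dim_k(\mathfrak{R}^l/\mathfrak{R}^{l+1})>1$; then $M_\lambda:=\Lambda e_a/\langle u-\lambda v\rangle$ is a one-parameter family of bricks, all of the same length. In both subcases we obtain infinitely many non-isomorphic bricks of a single fixed dimension vector, so $\Lambda$ is not Schur-representation-finite.

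I would then assemble the two subcases into the contrapositive statement, combine with the trivial direction, and close with the remark that the second assertion follows from Lemma \ref{DO is Schur-rep-fin} exactly as in the non-distributive case. This keeps the proof parallel in structure to Theorem \ref{my conj verified for min-rep non-dist}, which I expect is the author's intent.

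The main obstacle, to the extent there is one, is purely bookkeeping: verifying that Proposition \ref{Bongartz's trick for a family of modules} genuinely applies to every barbell in the list, i.e. that one can always locate the primitive idempotent $e$ with $e\Lambda e\simeq k$ and the $\Lambda$-independent elements $u,v$ in the correct radical layer, uniformly over all lengths of $C_L$, $C_R$ and the bar $\mathfrak{b}$, and over all admissible orientations of the dashed segments. This is genuinely the same computation as in Proposition \ref{tau-inf non-dist are schur-rep-inf} (``there obviously exist $u$ and $v$''), and since the barbell has a source or sink at which only $k$ sits while two distinct paths emanate, the hypothesis $\dim_k(\mathfrak{R}^l/\mathfrak{R}^{l+1})>1$ holds at the first level where the two cyclic routes diverge. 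Apart from that, nothing subtle is needed; the band-module argument for $k\widetilde{\mathbb{A}}_m$ is standard tame hereditary theory.
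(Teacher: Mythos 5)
Your overall architecture (trivial direction via the brick--$\tau$-rigid correspondence, contraposition via the classification of $\tau$-tilting infinite algebras in $\Mri(\mathfrak{F}_{\B})$ into acyclic $k\widetilde{\mathbb{A}}_m$ and barbells, and the closing appeal to Lemma \ref{DO is Schur-rep-fin}) matches the paper, and your treatment of the $k\widetilde{\mathbb{A}}_m$ case is fine. The gap is in the barbell case: Proposition \ref{Bongartz's trick for a family of modules} does not apply there. Its hypothesis, $\dim_k(\mathfrak{R}^l/\mathfrak{R}^{l+1})>1$ for the radical filtration of some bimodule $f\Lambda e$, is precisely the non-distributivity condition of Proposition \ref{distributive algebras}, and barbell algebras are distributive: they are min-rep-infinite but do not appear in Bongartz's list (Theorem \ref{Thm:Bongartz classification of min-rep-inf non-distributive}), and the paper records that $A(p,q)$ and $E(p,q,r)$ are the only classes common to $\Mri(\mathfrak{F}_{\sB})$ and $\Mri(\mathfrak{F}_{\nD})$. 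Concretely, the ``two competing routes'' leaving a source (or entering a sink) of a barbell head into $C_L$ and $C_R$ respectively and never reconverge as nonzero \emph{directed} paths to one and the same vertex, so you never obtain two $\Lambda$-independent elements $u,v$ lying in a single $f\Lambda e$; every bimodule $e_y\Lambda e_x$ of a barbell is uniserial, all radical layers have dimension at most one, and the quotient construction $\Lambda e_a/\langle u-\lambda v\rangle$ cannot be set up as in Bongartz's trick. So the step ``the hypothesis $\dim_k(\mathfrak{R}^l/\mathfrak{R}^{l+1})>1$ holds at the first level where the two cyclic routes diverge'' is false, and with it the barbell half of your contrapositive collapses.

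The paper closes this case with string-algebra techniques instead: since a barbell is gentle, one uses the band exhibited in the proof of \cite[Proposition 5.6]{Mo}, whose associated one-parameter family of band modules consists of pairwise non-isomorphic bricks of the same length (equivalently, of a fixed dimension vector), which is exactly the family of Schur representations needed. To repair your argument, replace the appeal to Proposition \ref{Bongartz's trick for a family of modules} in the barbell case by this band-module construction (or any explicit band on the barbell that avoids the two quadratic relations and yields bricks on the mouths of the corresponding tubes); the rest of your proof then goes through as written.
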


\begin{proof}
We only need to show that if $\Lambda$ is $\tau$-tilting infinite, there exists a dimension vector $\underline{d}$ such that $\modu (\Lambda, \underline{d})$ contains infinitely many bricks.
By \cite[Theorem 1.3]{Mo}, we know that $\Lambda=kQ/I$ is $\tau$-tilting infinite if and only if $\Lambda=k\widetilde{\mathbb{A}}_m$, for some $m \in \mathbb{Z}_{>0}$, or $(Q,I)$ is a barbell algebra. In the former case, the desired result is well-known. As for the latter case, one can easily check that the band given in the proof of \cite[Proposition 5.6]{Mo} admits a one parameter family of band modules of the same length which are brick. The last assertion follows from Lemma \ref{DO is Schur-rep-fin}.
\end{proof}

From the above results, Theorem \ref{Introduction: proven conjectures} follows.

\begin{customthm}{1.7}
Suppose $\Lambda$ is a minimal representation-infinite algebra which is biserial or non-distributive. Then, the following hold:

\begin{enumerate}
    \item If $\Lambda$ is DO, then it is $\tau$-tilting finite.
    
    \item $\Lambda$ is Schur-representation-finite if and only if it is $\tau$-tilting finite.
\end{enumerate}

\end{customthm}

Based upon the above results and some other classes of algebra in $\Mri(\mathfrak{F}_{\gC})$ that we have considered, we state the following conjectures. In the rest of the section we will provide some further observations on these conjectures that outline our future work. 

\begin{conjecture}\label{my conjectures}
For each algebra $\Lambda$, the following hold:
\begin{enumerate}
    \item If $\Lambda$ is DO, then it is $\tau$-tilting finite.
    \item $\Lambda$ is Schur-representation-finite if and only if $\Lambda$ is $\tau$-tilting finite.
\end{enumerate}
\end{conjecture}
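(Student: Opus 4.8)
The plan is to reduce the full conjecture to a single brick-theoretic statement of Brauer--Thrall type and then to attack that statement through the structure theory of minimal $\tau$-tilting infinite algebras. First observe that part (1) is a formal consequence of part (2): by Lemma \ref{DO is Schur-rep-fin} the DO property implies Schur-representation-finiteness, so once part (2) is known, every DO algebra is $\tau$-tilting finite. Likewise one implication of part (2) is free: by the brick-$\tau$-rigid correspondence of \cite{DIJ}, a $\tau$-tilting finite algebra is brick-finite, hence has only finitely many bricks in total and a fortiori finitely many orbits of bricks in each $\modu(\Lambda,\underline d)$, so it is Schur-representation-finite. Thus the entire conjecture rests on the single implication
\[
\text{$\Lambda$ is $\tau$-tilting infinite}\ \Longrightarrow\ \text{$\Lambda$ is not Schur-representation-finite,}
\]
equivalently: if $\modu\Lambda$ contains infinitely many bricks, then infinitely many of them share one common dimension vector. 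This is exactly a brick analogue of the second Brauer--Thrall conjecture and is the genuinely open core.

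Second, I would reduce this core implication to minimal $\tau$-tilting infinite algebras. If $\Lambda$ is $\tau$-tilting infinite, choose an ideal $J$ maximal among those with $\Lambda/J$ still $\tau$-tilting infinite (such $J$ exists since $\Lambda$ is finite dimensional); then $\Lambda/J$ is minimal $\tau$-tilting infinite. Since $\modu(\Lambda/J)$ is a full subcategory of $\modu\Lambda$ on which both the brick property and the dimension vector are preserved, a family of bricks of fixed dimension vector over $\Lambda/J$ is automatically such a family over $\Lambda$. Hence it suffices to treat minimal $\tau$-tilting infinite $\Lambda$, and here Theorem \ref{no node prop} already supplies a first structural constraint: such $\Lambda$ is node-free. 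This mirrors the node-free reductions underlying the Bongartz--Ringel classifications and is precisely the foothold that made Theorems \ref{my conj verified for min-rep non-dist} and \ref{my conj verified for min-rep biserial} possible.

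Third, I would split the reduced problem according to the ideal lattice. In the non-distributive regime Bongartz's construction carries over essentially verbatim: as in Proposition \ref{Bongartz's trick for a family of modules} and the proof of Theorem \ref{CKW Thm on triangualr non-dist}, non-distributivity yields primitive idempotents $e,f$ with $\dim_k(\mathfrak R^l/\mathfrak R^{l+1})>1$, hence $\Lambda$-independent elements $u,v$ and a one-parameter family $M_\lambda=\Lambda e/\langle u-\lambda v\rangle$ of pairwise non-isomorphic bricks of the same dimension vector, provided one can arrange $e\Lambda e\simeq k$. In the distributive regime Proposition \ref{distributive algebras} and Remark \ref{localness of trancations of distributive algebras} force a rigid shape (finite ideal lattice, every $e_y\Lambda e_x$ cyclic on one side); here I would argue that a node-free, distributive, $\tau$-tilting infinite algebra must contain a tame-concealed or tubular configuration as a convex quotient, and then extract a $\mathbb P^1$-family of band bricks of fixed length exactly as in the $\widetilde{\mathbb A}_m$ and barbell arguments of Theorems \ref{my conj verified for min-rep non-dist} and \ref{my conj verified for min-rep biserial}.

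I expect the main obstacle to be twofold. The first difficulty is the oriented-cycle case of Bongartz's trick: when the relevant idempotent $e$ lies on an oriented cycle one cannot in general secure $e\Lambda e\simeq k$, so the brick criterion $eM_\lambda\simeq k$ of Proposition \ref{Bongartz's trick for a family of modules} fails and one needs a substitute that produces genuine bricks rather than merely non-isomorphic indecomposables. The second, deeper, difficulty is that, lacking a Bongartz--Ringel-style classification of minimal $\tau$-tilting infinite algebras, one has no finite list to inspect as in the min-rep-infinite setting; the honest route is therefore either to first establish such a classification (starting from the node-freeness of Theorem \ref{no node prop}) or to prove the brick-Brauer--Thrall implication abstractly, for instance via generic bricks together with a tame--wild reduction, or through the geometry of the irreducible components of $\modu(\Lambda,\underline d)$ and semi-invariants in the spirit of \cite{CKW}. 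I regard the distributive tame subcase, where Bongartz's construction offers no help, as the decisive hard point of the conjecture.
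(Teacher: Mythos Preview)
The statement you are attempting is labelled \emph{Conjecture} in the paper and is not proved there in general; the paper only verifies it for the special family of minimal representation-infinite algebras that are biserial or non-distributive (Theorems \ref{my conj verified for min-rep non-dist} and \ref{my conj verified for min-rep biserial}, summarized as Theorem 1.7). So there is no ``paper's own proof'' to compare against, and your proposal should be read as a research outline toward the open conjecture rather than as a proof.

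Your formal reductions are correct and match exactly what the paper itself notes. That part (1) follows from part (2) via Lemma \ref{DO is Schur-rep-fin}, and that the implication ``$\tau$-tilting finite $\Rightarrow$ Schur-representation-finite'' is immediate from \cite{DIJ}, are both observed in the paragraph following the conjecture. Your further reduction to minimal $\tau$-tilting infinite quotients is also sound: the existence of a maximal $J$ with $\Lambda/J$ still $\tau$-tilting infinite follows from finite-dimensionality, and bricks of a fixed dimension vector over $\Lambda/J$ retain that property over $\Lambda$. This step is in the spirit of the paper's closing remark that the conjecture is a brick-analogue of the second Brauer--Thrall conjecture and should be reduced to minimal $\tau$-tilting infinite algebras.

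Where your proposal stops being a proof is precisely where you say it does. In the non-distributive branch, Bongartz's construction (Proposition \ref{Bongartz's trick for a family of modules}) only yields bricks when $e\Lambda e\simeq k$, and for a general minimal $\tau$-tilting infinite algebra there is no reason this can be arranged; the paper gets around this only because Bongartz's classification (Theorem \ref{Thm:Bongartz classification of min-rep-inf non-distributive}) hands it the source vertex $a$ with $e_a\Lambda e_a\simeq k$ in each case. In the distributive branch your sketch (``must contain a tame-concealed or tubular configuration as a convex quotient'') is a hope, not an argument: no classification of minimal $\tau$-tilting infinite algebras is available, node-freeness (Theorem \ref{no node prop}) alone is far from sufficient, and nothing currently rules out distributive minimal $\tau$-tilting infinite algebras whose infinite brick families are spread over infinitely many dimension vectors. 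You identify both obstacles honestly, but they are genuine gaps, not technicalities --- which is exactly why the paper leaves the statement as a conjecture.
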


From Lemma \ref{DO is Schur-rep-fin}, it is immediate that if the second part of the previous conjecture holds, then the first assertion also follows.
To better explain our motivations for our future investigations in the direction of above conjectures, let us briefly recall another geometric notion studied in \cite{CKW}, which allows us to give a more complete picture. 

An algebra $\Lambda$ is said to have \emph{multiplicity-free property} (or $\Lambda$ is MF, for short), if for every dimension vector $\underline{d}$, and each irreducible component $\mathcal{Z}$ of the module variety $\modu( \Lambda, \underline{d})$, the algebra of semi-invariants $k[\mathcal{Z}]^{\SL(\underline{d})}$ is multiplicity-free.
This holds for $\Lambda=kQ/I$ if and only if for each weight $\theta \in \mathbb{Z}^{Q_0}_{\geq 0}$ we have $\dim_k (\SI(\mathcal{Z})_{\theta}) \leq 1$, where $\SI(\mathcal{Z})_{\theta}$ is the space of semi-invariants on $\mathcal{Z}$ of weight $\theta$. For further details, see \cite{CKW}.

Now, in the following diagram we can present a more complete version of the sequence of implications mentioned in the Introduction. Here the dotted arrows indicate the statements of Conjecture \ref{my conjectures}.
\begin{center}
\begin{tikzpicture}

\node at (-8.1,0.4) {($\dagger$)};

\node at (-7.25,0) {Rep-finite};
\draw  [->] (-6.25,0) --(-5.5,0);
\node at (-4.35,0) {DO Property};
\draw  [->] (-3.1,0) --(-1,0);
\node at (0.5,0) {Schur-rep-finite};
\draw  [->] (1.95,0) --(2.75,0);
\node at (4,0) {MF Property};
----
\draw  [dotted,->] (-4,-0.25) --(-2.5,-0.75);
\node at (-2.25,-1) {{\color{blue} $\tau$-tilting finite}};
\draw  [dotted,<-] (-2,-0.75) --(0,-0.25);
----
\draw  [->] (-1.5,-0.75) --(0.5,-0.25);

\end{tikzpicture}
\end{center}

Note that Theorems \ref{my conj verified for min-rep non-dist} and \ref{my conj verified for min-rep biserial} show that for min-rep-infinite non-distributive and biserial algebras, the implications in ($\dagger$) indicated by dotted arrows actually hold.
Moreover, if $\mathfrak{F}$ is the family of all algebra $\Lambda$ such that $\Mri(\Lambda)$ contains a $\tau$-tilting infinite algebra in $\Mri(\mathfrak{F}_{\sB}) \cup \Mri(\mathfrak{F}_{\nD})$, then every such $\Lambda$ is Schur-rep-infinite, thus both parts of Conjecture \ref{my conjectures} hold for the family $\mathfrak{F}$. Here, as before, by $\Mri(\Lambda)$ we denote the set of all quotient algebras of $\Lambda$ which are min-rep-infinite.
In general, the implications on the top row of ($\dagger$) have been already proved in \cite{CKW}, where the authors also show the following:
\begin{theorem}\cite{CKW} \label{CKW preproj & tame Thms}
Let $\Lambda$ be an algebra.
\begin{enumerate}
    \item If $\Lambda$ admits a preprojective (preinjective) component, all of the properties on the top row of $(\dagger)$ are equivalent;
    
    \item For the class of tame algebras, $\Lambda$ is Schur-rep-finite if and only if it is MF. 
\end{enumerate}
\end{theorem}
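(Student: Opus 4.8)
The plan is to leverage the chain of implications on the top row of $(\dagger)$, which is already available from \cite{CKW}, namely
\[
\text{Rep-finite}\ \Rightarrow\ \text{DO}\ \Rightarrow\ \text{Schur-rep-finite}\ \Rightarrow\ \text{MF}.
\]
Granting this chain, part (1) is equivalent to the single reverse implication $\text{MF}\Rightarrow\text{Rep-finite}$ under the preprojective (resp.\ preinjective) hypothesis, and part (2) is equivalent to $\text{MF}\Rightarrow\text{Schur-rep-finite}$ for tame $\Lambda$. I would prove both reverse implications in contrapositive form, and both rest on a single geometric principle that I isolate first.

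\emph{Key principle.} Suppose there is a dimension vector $\underline{d}$, an irreducible component $\mathcal{Z}$ of $\modu(\Lambda,\underline{d})$, and a fixed weight $\theta$ (with $\theta\cdot\underline{d}=0$) such that $\mathcal{Z}$ contains a positive-dimensional family of pairwise non-isomorphic $\theta$-stable modules. Then $\Lambda$ is not MF. Indeed, by King's GIT description of moduli of representations, the graded ring $\bigoplus_{n\ge 0}\SI(\mathcal{Z})_{n\theta}$ is the homogeneous coordinate ring of the projective moduli space of $\theta$-semistable points of $\mathcal{Z}$, with the $\theta$-stable locus as an open dense subvariety. The hypothesis forces this moduli space to have dimension $\ge 1$; since for an ample class on a projective variety of positive dimension the graded dimensions $\dim_k\SI(\mathcal{Z})_{n\theta}$ grow without bound, we obtain $\dim_k\SI(\mathcal{Z})_{n\theta}\ge 2$ for $n\gg 0$, contradicting the multiplicity-free condition. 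Thus the only real work is to manufacture such a family.

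For part (2), assume $\Lambda$ is tame and not Schur-rep-finite, so that some $\underline{d}$ carries infinitely many isomorphism classes of bricks. By tameness these bricks are not isolated: all but finitely many of them lie in finitely many one-parameter families of generic modules of dimension $\underline{d}$, so after passing to one such family I obtain a rational curve of pairwise non-isomorphic bricks inside a single irreducible component $\mathcal{Z}$ (this is precisely the tame, second Brauer--Thrall type phenomenon). Choosing a weight $\theta$ for which the generic members of this family are $\theta$-stable, the curve lands in the $\theta$-stable locus, and the Key principle gives that $\Lambda$ is not MF; combined with the forward chain this yields $\text{Schur-rep-finite}\iff\text{MF}$ for tame algebras. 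For part (1), assume $\Lambda$ admits a preprojective component and is rep-infinite; I must show $\Lambda$ is not MF. Here I would invoke the structure theory of preprojective components: a connected rep-infinite algebra with a preprojective component contains, after the appropriate reductions, a convex tame-concealed or wild-concealed subcategory, and in either case $\modu(\Lambda,\underline{d})$ acquires for a suitable $\underline{d}$ a positive-dimensional family of stable bricks --- in the tame-concealed case the quasi-simple regular modules at the mouth of a homogeneous tube (a $\mathbb{P}^1$-family of bricks of one fixed dimension vector), and in the wild case a stable moduli space that is already positive-dimensional. The Key principle then applies, closing the loop $\text{MF}\Rightarrow\text{Rep-finite}$ and proving all four properties equivalent.

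The main obstacle is the family-production step, not the GIT bookkeeping. Concretely, I expect the delicate points to be: (i) in part (2), upgrading mere brick-infiniteness in a fixed $\underline{d}$ over a tame algebra to a genuinely positive-dimensional family that is simultaneously $\theta$-stable for one weight $\theta$, which requires the generic-decomposition structure of tame module varieties rather than only the count of bricks, together with the existence of a uniform stabilizing weight; and (ii) in part (1), extracting the requisite convex concealed subcategory from the preprojective-component hypothesis and transporting its one-parameter family of stable modules back to $\modu(\Lambda,\underline{d})$ while controlling the ambient irreducible component and the choice of $\theta$. Once these structural inputs are in place, the semi-invariant dimension growth is automatic from the moduli interpretation.
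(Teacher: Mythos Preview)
The paper does not give a proof of this theorem: it is stated with the citation \cite{CKW} and no argument is supplied, as the result is taken directly from Chindris--Kinser--Weyman. There is therefore no ``paper's own proof'' to compare your proposal against; you have written a sketch for a result that the present paper merely quotes.

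That said, a brief comment on your sketch. Your overall architecture (closing the chain by proving the contrapositive of the missing reverse implication via a positive-dimensional moduli space of $\theta$-stables) is indeed the shape of the argument in \cite{CKW}. The main soft spots are exactly the ones you flag: in part (2), the passage from ``infinitely many bricks in dimension $\underline{d}$'' to ``a curve of $\theta$-stable modules for a single weight $\theta$ inside a single irreducible component'' is where the real work in \cite{CKW} lies, and your appeal to generic tameness does not by itself produce a uniform stabilizing weight; in part (1), the reduction from ``rep-infinite with a preprojective component'' to a concealed subcategory, and then transporting its stable family back into an irreducible component of $\modu(\Lambda,\underline{d})$, requires the specific structural results used in \cite{CKW} rather than a bare invocation of Happel--Vossieck. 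So your plan is correct in outline but the two steps you label (i) and (ii) are not details---they are the content of the theorem, and in a self-contained write-up you would need to reproduce the relevant arguments from \cite{CKW}.
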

Moreover, the authors conjectured that the equivalence in the second part of the previous theorem holds for arbitrary algebras (i.e, an algebra is Schur-rep-finite if and only if it is MF).

\begin{remark}
We finish this section by a list of observations and motivations that may provide further insights into Conjecture \ref{my conjectures}, as well as to those given by Chindris-Kinser-Weyman \cite{CKW}.
\begin{enumerate}
    \item[$(1')$] As mentioned earlier, a rep-infinite algebra $\Lambda$ which admits a preprojective or preinjective component is $\tau$-tilting infinite. Thus, statement $(1)$ in Theorem \ref{CKW preproj & tame Thms} holds for the entire diagram ($\dagger$).
    
    \item[$(2')$] If the second part of Conjecture \ref{my conjectures} holds, statement $(2)$ in Theorem \ref{CKW preproj & tame Thms} will imply that every tame algebra is $\tau$-tilting finite if and only if it is MF. Thus, the class of tame algebras can be the first candidate for verification of the aforementioned part.
    Moreover, our conjecture may provide new tools from $\tau$-tilting theory to verify the conjecture of Chindris-Kinser-Weyman that follows Theorem \ref{CKW preproj & tame Thms}.
    
    \item[$(3')$] As shown in \cite{Mo}, the class of wind wheel algebras, introduced in \cite{Ri}, forms rep-infinite string algebras which are $\tau$-tilting finite. Therefore, by Proposition \ref{DO String}, they serve as explicit examples to show that the conjectural implication ``DO property implies $\tau$-tilting finite", denoted by a dotted arrow in ($\dagger$), cannot be an equivalence.
    
    \item[$(4')$] Representation-finiteness and $\tau$-tilting finiteness of algebras could be described in terms of functorially fintie subcategories in the module category and in the language of approximation theory.
    Hence, showing the second part of Conjecture \ref{my conjectures} gives an immediate categorification of the geometric notion of Schur-rep-finite. Moreover, if any part of Conjecture \ref{my conjectures} holds, we can sandwich the DO property between two homological notions and hope to obtain a categorification of this geometric notion.
\end{enumerate}    
\end{remark}

\vskip 0.7 cm
\noindent
\textbf{Acknowledgements.} The author would like to thank Hugh Thomas, Charles Paquette and Pierre-Guy Plamondon for several helpful conversations and comments. The author is also grateful to the organizers of the CIMPA School on ``Geometric and Homological Methods in the Representation Theory of Associative Algebras and their Applications", held in Medellin, Colombia, as well as to Ryan Kinser for a series of talks on the subject considered in the final section of this note.

\end{document}